\documentclass[opre]{GDDArxiv}

\OneAndAHalfSpacedXI 

\usepackage{endnotes}
\usepackage{bm}
\let\footnote=\endnote

\usepackage{enumitem}
\usepackage{cases}
\usepackage{amsmath}
\usepackage{algorithm}
\usepackage{algpseudocode}
\usepackage{caption}
\usepackage{float}
\usepackage{comment}
\usepackage{subcaption} 
\usepackage{tikz}
\usepackage{booktabs,siunitx,multirow, makecell}
\usepackage{graphicx}
\usepackage{xcolor}
\usepackage{amsmath}
\usepackage{amssymb}
\usepackage{mathtools}
\usepackage{empheq}
\usepackage{bbm}
\usepackage{bm}
\usepackage{appendix}
\usepackage[authoryear]{natbib}
\usepackage[normalem]{ulem}
 \bibpunct[, ]{(}{)}{,}{a}{}{,}%
 \def\bibfont{\small}%

\usepackage{soul}

\TheoremsNumberedThrough     
\ECRepeatTheorems

\EquationsNumberedThrough    

\usepackage[hidelinks]{hyperref}

\begin{document}


\RUNAUTHOR{Zhang and Jiang}

\RUNTITLE{Generalized Dual Decomposition}

\TITLE{Generalized Dual Decomposition}

\ARTICLEAUTHORS{%

\AUTHOR{Pengyu Zhang, Ruiwei Jiang}
\AFF{Industrial \& Operations Engineering,
University of Michigan, Ann Arbor, \EMAIL{\{pengyuz, ruiwei\}@umich.edu}}
} 

\ABSTRACT{%
We study two-stage stochastic optimization models with mixed-integer decision variables appearing in both stages. For these models, dual decomposition enables parallel computing implementation and can quickly provide a lower bound for the optimal value. However, the lower bound thus obtained is not exact in general due to the lack of strong duality. In this paper, we propose a generalized dual decomposition (GDD) that extends the linear regularizer used in dual decomposition to a general nonlinear one, which still admits parallelization while exhibiting strong duality. By encoding the nonlinear regularizers through parameterization and cutting planes, we establish the convergence of a GDD algorithm to achieve global optimum. 
In addition, we discuss strategies for solving the GDD scenario subproblems more efficiently, including pruning and valid inequalities. Furthermore, we extend GDD to a more general, constrained form that subsumes, as special cases, robust optimization, chance-constrained programs, and bilevel optimization with multiple followers. 
Finally, numerical experiments demonstrate the strong duality of GDD, 
its computational efficacy versus primal (Benders-type) decomposition algorithms, and the speedup through parallel computing.
}%



\KEYWORDS{Stochastic programming, dual decomposition, parallel computing}

\maketitle
%
%
%
\section{Introduction}
We consider a two-stage stochastic optimization model~\citep{kuccukyavuz2017introduction,shapiro2021lectures,ntaimo2024computational}
\begin{align}
v^* \ := \ \min_{\bm{x} \in X} \ \ \sum_{i=1}^N f_i(\bm{x}), \tag{SP} \label{problem:primal}
\end{align}
where \(\bm{x}\) denotes first-stage (or here-and-now) decision variables living in a feasible region \(X \subseteq \mathbb{Z}^{p_1} \times \mathbb{R}^{n_1-p_1}\) and, for each scenario \(i \in [N]:=\{1, \ldots, N\}\), \(f_i(\bm{x})\) denotes the optimal value of a second-stage (or recourse) mixed-integer program that may be linear or nonlinear. As a popular example, the recourse function \(f_i(\cdot)\) is defined through a mixed-integer linear program (MILP)
\begin{align*}
f_i(\bm{x}) \ := \ (\bm{c}^i)^{\top}\bm{x} + \min_{\bm{y} \in Y} \ & \ (\bm{q}^i)^{\top} \bm{y} \\
\text{s.t.} \ & \ \bm{T}^i\bm{x} + \bm{W}^i\bm{y} = \bm{h}^i
\end{align*}
and \(Y \subseteq \mathbb{Z}^{p_2}\times\mathbb{R}^{n_2-p_2}\) for all \(i \in [N]\). Note here that the objective coefficient \(\bm{q}^i\) subsumes the probability of scenario \(i\) occurring. 
The~\eqref{problem:primal} model finds a wide range of applications in, e.g., energy systems~\citep{munoz2015scalable}, healthcare~\citep{shehadeh2021using}, telecommunication~\citep{sen1994network}, and transportation~\citep{paul2019supply}. 

The main difficulty in solving~\eqref{problem:primal} is the potentially massive aggregation of the recourse functions $f_i(\cdot)$, which are in general nonlinear and nonconvex. Although the resulting model can be written as a deterministic mixed-integer program, its size quickly becomes computationally intractable and, as demonstrated in the subsequent numerical experiments (see Section~\ref{sec:numerical}), the ensuing branch-and-bound tree takes up prohibitively enormous memory. This motivates the use of decomposition methods, which exploit the fact that the recourse variables \(\bm{y}\) dwell separately in each scenario and only the here-and-now variables \(\bm{x}\) link all scenarios together.

The first paradigm of decomposition methods extend the Benders or L-shaped method~\citep{van1969shaped} for solving two-stage stochastic linear programs to the mixed-integer case~\citep[see, e.g.,][]{sen2005algorithms,kuccukyavuz2017introduction}. These methods derive cutting planes for the epigraph of the recourse function \(f_i(\cdot)\) (or its expectation, \(\sum_{i=1}^N f_i(\cdot)\)) and achieve global optimum of~\eqref{problem:primal} by iteratively incorporating these cutting planes to better approximate the epigraph. Examples of cutting planes include integer optimality cuts~\citep{laporte1993integer,angulo2016improving}, parametric Gomory cuts~\citep{gade2014decomposition,zhang2014finitely}, disjunctive cuts~\citep[see, e.g.,][]{sen2005c,chen2011finite,qi2017ancestral}, Lagrangian cuts~\citep[see, e.g.,][]{zou2019stochastic,rahmaniani2020benders,chen2022generating}, scaled cuts~\citep{van2024converging}, reverse norm cuts~\citep{ahmed2022stochastic}, and ReLU cuts~\citep{deng2024relu}.


A second paradigm adopts dual decomposition (DD), which was first proposed by~\cite{rockafellar1976nonanticipativity} to the best of our knowledge. DD makes copies \(\{\bm{x}^i: i \in [N]\}\) of the here-and-now variable \(\bm{x}\) for all scenarios and rewrite~\eqref{problem:primal} through nonanticipativity constraints, e.g., \(\bm{x}^i = \bm{x}\) for all \(i \in [N]\). Then, relaxing these constraints and then penalizing their violations yield a Lagrangian relaxation of~\eqref{problem:primal}. DD is appealing because (i) for fixed Lagrangian multipliers, the relaxation becomes \emph{separable} across all scenario subproblems, allowing us to solve them in parallel; and (ii) the lower bound obtained from solving this relaxation is oftentimes promising in a variety of applications~\citep{deng2018parallel}. Furthermore,~\cite{rockafellar1991scenarios} proposed a progressive hedging (PH) algorithm, which augments each scenario subproblem in DD with a penalty term to push its solution towards an average across scenarios~\citep[see][for a different augmented Lagrangian relaxation]{mulvey1995new}. Later,~\cite{watson2011progressive} and~\cite{gade2016obtaining} derived lower bounds in each iteration of PH and improved its convergence.

Unfortunately, when~\eqref{problem:primal} involves mixed-integer decision variables in both stages, existing approaches in the DD paradigm do not admit strong duality and the ensuing lower bound is not exact in general. To achieve global optimum,~\cite{caroe1999dual} and~\cite{lubin2013parallelizing} incorporated DD in a branch-and-bound algorithm, and~\cite{ahmed2013scenario} and~\cite{deng2018parallel} assumed that \(\bm{x}\) is purely binary and progressively shrank the feasible region \(X\) through ``no-good cuts''~\citep[see, e.g.,][]{angulo2016improving}. Notably, in the same setting of purely binary \(\bm{x}\),~\cite{cifuentes2024lagrangian} added monomial nonanticipativity constraints, e.g., \(x^i_1x^i_2 = x_1x_2\), \(x^i_1x^i_2x^i_3 = x_1x_2x_3\) and so on, into the DD formulation and successfully established \emph{strong duality} when all possible monomial constraints are incorporated.

This paper follows the works in the DD paradigm and proposes a generalized dual decomposition (GDD) approach to solving~\eqref{problem:primal} with mixed-integer decision variables in both stages. GDD allows us to solve scenario subproblems in parallel and meanwhile admits strong duality of~\eqref{problem:primal}. Although our experiments focus on linear objective functions and constraints (except for the integrality restrictions), GDD can be readily deployed to the nonlinear case, for example,~\eqref{problem:primal} with quadratic objective functions. Our main contributions are as follows.

\begin{enumerate}
\item We propose a GDD formulation for~\eqref{problem:primal} based on nonlinear regularizers. We prove that this formulation admits strong duality.
\item We propose an algorithm to iteratively approximate the nonlinear regularizers through parameterization and cutting planes. We derive sufficient conditions for these approximations so that the algorithm converges to global optimum of the GDD formulation.
\item We study strategies to speed up the solution of each scenario subproblem in the GDD algorithm. In particular, we focus on a class of piecewise constant regularizers to propose a pruning strategy and to characterize their convex envelope.
\item We extend GDD beyond~\eqref{problem:primal} to robust optimization, chance constraints, and bilevel optimization with multiple followers.
\item We conduct extensive numerical experiments to demonstrate the effectiveness of GDD, including its strong duality, its comparison with alternative solution strategies such as the deterministic equivalent formulation and the primal decomposition methods, and the speedup through parallel computing.
\end{enumerate}

In the remainder of this paper, we introduce the GDD formulation and prove its strong duality in Section~\ref{sec:model}, describe the algorithm and discuss algorithmic improvement strategies in Section~\ref{sec:GDDMIPAlgDesign}, present extensions in Section~\ref{sec:extensions}, report numerical results in Section~\ref{sec:numerical}, before concluding the paper in Section~\ref{sec:conclude}. All proofs are provided in the Appendix.



\paragraph{Notation.}
Throughout this paper, lowercase letters (e.g., \(a\)) denote scalars, bold lowercase letters (e.g., \(\bm{a}\)) denote vectors, and bold uppercase letters (e.g., \(\bm{A}\)) denote matrices. For any positive integer \(N\), we let \([N] := \{1,2,\ldots,N\}\). For a binary vector \(\bm{x}\), we use \(I(\bm{x}) := \{j : x_j = 1\}\) to denote the index set of its nonzero components. For a set \(A\), we use \(\mathbbm{1}(\bm{x} \in A)\) to denote its indicator function, \(\operatorname{conv}(A)\) to denote its convex hull, 
\(\operatorname{cl}(A)\) its closure, and \(\operatorname{Proj}_{\bm{x}}(A)\) its projection onto the \(\bm{x}\)-space. We let \(\bm{e}\) and \(\bm{e}_i\) denote an all-one vector and the \(i\)th unit vector, respectively.

\section{A Generalized Dual Decomposition Formulation} \label{sec:model}
The primary computational challenge of \ref{problem:primal} stems from the potentially large number of scenarios, $N$. The challenge of finding a solution balancing objectives across all scenarios is revealed by the nonanticipativity constraints in the following equivalent formulation of~\eqref{problem:primal}:
\begin{align}
    \min_{\bm{x}, \bm{x}^i \in X, i \in [N]}\ &\  \sum_{i=1}^N f_i(\bm{x}^i) \nonumber\\
    \text{s.t.}\ &\ \bm{x}^i = \bm{x} \quad \forall i \in [N]. \tag{SP-NA}
    \label{problem:primalNA}
\end{align}
By applying Lagrangian relaxation to the nonanticipativity constraints, the standard DD yields a lower bound with a naturally decomposable structure:
\begin{align}
v_{\text{DD}} \ := \ \max_{\bm{\lambda}_i} \ & \ \sum_{i=1}^N \ \min_{\bm{x}^i \in X} \Big\{f_i(\bm{x}^i) + \bm{\lambda}^\top_i \bm{x}^i \Big\} \nonumber \\
\text{s.t.} \ & \ \sum_{i=1}^N \bm{\lambda}_i = 0 \tag{DD} \label{problem:dual_decomposition} \\
& \ \bm{\lambda}_i \in \mathbb{R}^{n_1} \quad \forall i \in [N], \nonumber
\end{align}
where \(\bm{\lambda}_i \in \mathbb{R}^{n_1}\) represents the Lagrangian multiplier for constraint \(\bm{x}^i = \bm{x}\). The term \(\bm{\lambda}_i^\top \bm{x}^i\) within each scenario subproblem objective functions serves as a linear regularizer to drive consensus among all scenario copies \(\bm{x}^i\) of the here-and-now variables. In fact, the positive duality gap between~\eqref{problem:primal} and~\eqref{problem:dual_decomposition} can be partly attributed to the linear regularizer, which has limited capability of coordinating the scenario subproblems.

\begin{example}[\ref{problem:dual_decomposition} Lacking Strong Duality] \label{ex:DD-strong-duality}
Consider an instance of~\eqref{problem:primal} with \(N = 2\),
\begin{align*}
    \min_{\bm{x} \in X} \ \Big\{f_1(\bm{x}) + f_2(\bm{x})\Big\},
\end{align*}
where \(X = \{0, 1\}\times [0, 1]\), i.e., \(x_1 \in \{0,1\}\) and \(x_2 \in [0, 1]\). In addition, $f_1, f_2$ are defined through 
\begin{align*}
f_1(\bm{x}) \ := & \ \min_{\bm{y} \in \{0, 1\}\times \mathbb{R}_+}\Big\{2y_1 + 4y_2: \ y_1 + y_2 \geq x_1+x_2-1\Big\}, \\
f_2(\bm{x}) \ := & \ \min_{\bm{y} \in \{0,1\}^2 \times \mathbb{R}^2_+}\Big\{\delta y_1 + 2y_2 + 2\delta y_3 + 4y_4 + (\delta-2)x_1: \ y_1 + y_3 \geq x_2 - x_1, \ y_2 + y_4 \geq x_1 - x_2\Big\}
\end{align*}
for a \(\delta > 0\). Equivalently,
$$
f_1(\bm{x}) = \begin{cases}
    0 & \text{if} \ x_1 = 0\\
    \min\{4 x_2, \ 2\} & \text{if} \ x_1 = 1
\end{cases}
\quad \text{and} \quad
f_2(\bm{x}) = \begin{cases}
    \min\{2\delta x_2, \ \delta\} & \text{if} \ x_1 = 0\\
    \min\{\delta + 2 - 4x_2, \ \delta\} & \text{if} \ x_1 = 1.
\end{cases}
$$
We depict \(f_1, f_2\) in Fig.~\ref{fig:3dF1F2}. This instance has an optimal value of zero and a (unique) optimal solution of $\bm{x} = [0, 0]^{\top}$.

\begin{figure}
    \centering
    \includegraphics[width=\linewidth]{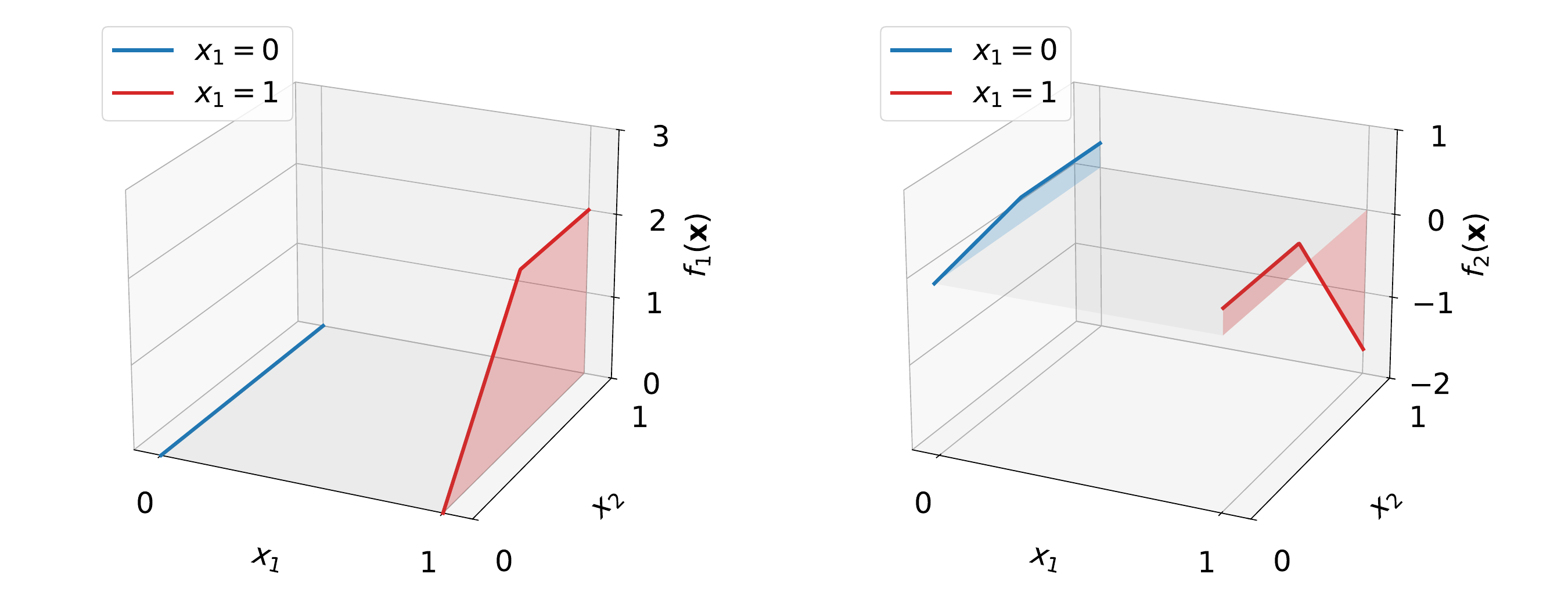}
    \caption{Visualization of $f_1$ and $f_2$ in Example~\ref{ex:DD-strong-duality} with \(\delta = 0.3\)}
    \label{fig:3dF1F2}
\end{figure}
Let $\bm{\lambda} := (\lambda_1, \lambda_2),- \bm{\lambda} \in \mathbb{R}^2$ denote the Lagrangian multipliers for $f_1$ and $f_2$ in~\eqref{problem:dual_decomposition}, respectively. Then, the~\eqref{problem:dual_decomposition} formulation becomes
\[
v_{\text{DD}} \ = \ \max_{\lambda_1, \lambda_2 \in \mathbb{R}} \Big\{ \min_{\bm{x} \in \{0, 1\}\times [0, 1]}\big\{f_1(\bm{x}) + \bm{\lambda}^{\top}\bm{x}\big\} + \min_{\bm{x} \in \{0, 1\}\times [0, 1]}\big\{f_2(\bm{x}) - \bm{\lambda}^{\top}\bm{x}\big\} \Big\}.
\]
Suppose that~\eqref{problem:dual_decomposition} admits strong duality in this instance and achieves the optimal value of zero. Then, for some \(\bm{\lambda}\), we have \(\min_{\bm{x}}\{f_1(\bm{x}) + \bm{\lambda}^\top \bm{x}\} + \min_{\bm{x}}\{f_2(\bm{x}) - \bm{\lambda}^\top \bm{x}\} \geq 0\). But \(f_1(\bm{0}) + \bm{\lambda}^\top \bm{0} = f_2(\bm{0}) - \bm{\lambda}^\top \bm{0} = 0\), that is, the solution \(\bm{x} = \bm{0}\) is feasible and attains a zero objective value in both scenario subproblems. Thus, \(\bm{x}=\bm{0}\) is an optimal solution to both subproblems. In particular, comparing \(\bm{x} = \bm{0}\) with \(\bm{x} = \bm{e}_1, \bm{e}_2\), that \(\bm{0} \in \text{arg}\min_{\bm{x}}\{f_1(\bm{x}) + \bm{\lambda}^\top \bm{x}\}\) implies 
\begin{subequations}
\begin{equation}
0 \ \leq \ f_1(\bm{e}_1) + \bm{\lambda}^\top \bm{e}_1 \ = \ \lambda_1 \quad \text{and} \quad 0 \ \leq \ f_1(\bm{e}_2) + \bm{\lambda}^\top \bm{e}_2 \ = \ \lambda_2. \label{dd-weak-note-1}
\end{equation}
Likewise, comparing \(\bm{x} = \bm{0}\) with \(\bm{x} = \bm{e}\), that \(\bm{0} \in \text{arg}\min_{\bm{x}}\{f_2(\bm{x}) - \bm{\lambda}^\top \bm{x}\}\) implies 
\begin{equation}
0 \ \leq \ f_2(\bm{e}) - \bm{\lambda}^\top \bm{e} \ = \ \delta - 2 - \lambda_1 - \lambda_2. \label{dd-weak-note-2}
\end{equation}
\end{subequations}
In other words, through~\eqref{dd-weak-note-1}--\eqref{dd-weak-note-2}, strong duality of~\eqref{problem:dual_decomposition} implies that \(0 \leq \lambda_1 + \lambda_2 \leq \delta - 2\), or equivalently \(\delta \geq 2\). Conversely,~\eqref{problem:dual_decomposition} fails to uphold strong duality whenever $\delta < 2$. \hfill\(\Box\)
\end{example}

To recover strong duality, we propose to adopt general nonlinear regularizers \(g_i(\bm{x}): X \rightarrow \mathbb{R}\), which generalize~\eqref{problem:dual_decomposition} to the following~\eqref{problem:generalized_dual_decomposition} formulation.
\begin{align}
v_{\text{GDD}} \ := \ \max_{g_i(\cdot)} \ & \ \sum_{i=1}^N \ \min_{\bm{x}^i \in X} \Big\{f_i(\bm{x}^i) + g_i(\bm{x}^i)\Big\} \nonumber \\
\text{s.t.} \ & \ \sum_{i=1}^N g_i(\bm{x}) = 0 \quad \forall \bm{x} \in X \tag{GDD} \label{problem:generalized_dual_decomposition} \\
& \ g_i: X \rightarrow \mathbb{R} \qquad \forall i \in [N]. \nonumber
\end{align}

We first establish the weak and strong duality of~\eqref{problem:generalized_dual_decomposition}.
\begin{theorem}[Weak and Strong Duality] \label{prop:strong_duality}
For any $\{g_i(\cdot)\}_{i \in [N]}$ such that $\sum_{i=1}^N g_i(\bm{x}) = 0$ for all $\bm{x} \in X$, it holds that \(\sum_{i=1}^N \min_{\bm{x}^i \in X} \big\{f_i(\bm{x}^i) + g_i(\bm{x}^i)\big\} \leq v^*\). Furthermore, it holds that \emph{\(v_{\text{GDD}} = v^*\)}, i.e., the optimal values of~\eqref{problem:primal} and~\eqref{problem:generalized_dual_decomposition} coincide.
\end{theorem}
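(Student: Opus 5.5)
Weak duality follows by evaluating every scenario subproblem at one common point. Fix any admissible $\{g_i\}_{i\in[N]}$ with $\sum_{i=1}^N g_i(\bm{x})=0$ on $X$, and take an arbitrary $\bm{x}\in X$. For each $i$, we have $\min_{\bm{x}^i\in X}\{f_i(\bm{x}^i)+g_i(\bm{x}^i)\}\le f_i(\bm{x})+g_i(\bm{x})$. Summing over $i\in[N]$ and using $\sum_i g_i(\bm{x})=0$ gives
\[
\sum_{i=1}^N \min_{\bm{x}^i\in X}\{f_i(\bm{x}^i)+g_i(\bm{x}^i)\}\ \le\ \sum_{i=1}^N f_i(\bm{x}).
\]
Taking the infimum over $\bm{x}\in X$ yields the bound by $v^*$. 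Since the choice of $g_i$ was arbitrary, this also gives $v_{\text{GDD}}\le v^*$. If the minima are not attained, I would read them as infima; the argument is unchanged.

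For the reverse inequality I would give an explicit construction that makes every $\bm{x}\in X$ equally attractive in each scenario. Assume first that $v^*$ is finite and each $f_i$ is real-valued on $X$. Define
\[
g_i(\bm{x}) := -f_i(\bm{x}) + \frac{1}{N}\sum_{j=1}^N f_j(\bm{x}), \qquad i\in[N].
\]
These satisfy $\sum_i g_i(\bm{x}) = -\sum_i f_i(\bm{x}) + \sum_j f_j(\bm{x}) = 0$ for every $\bm{x}\in X$, so they are feasible in~\eqref{problem:generalized_dual_decomposition}. Then $f_i+g_i = \frac1N\sum_j f_j$ for every $i$. Each subproblem value is therefore $\frac1N\inf_{\bm{x}\in X}\sum_j f_j(\bm{x}) = v^*/N$, and the sum over $i$ is exactly $v^*$. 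Together with weak duality, this gives $v_{\text{GDD}}=v^*$, and the maximum in~\eqref{problem:generalized_dual_decomposition} is attained by this $g$.

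The main obstacle is the degenerate cases rather than the core argument.

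\emph{Infeasible recourse.} $f_i(\bm{x})=+\infty$ for some $\bm{x}\in X$ is not allowed, because the $g_i$ must be real-valued. I would handle this by restricting to $\bm{x}$ with all $f_i$ finite: if some $f_i(\bm{x})=+\infty$, that $\bm{x}$ already contributes $+\infty$ to the primal objective. On such points I would set $g_i:=0$, or more carefully choose finite values that shift mass to an infinite $f_j$, so that those points never drop the subproblem minima below $v^*/N$.

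\emph{Unbounded or infinite $v^*$.} If $v^*=-\infty$, weak duality alone forces equality. If $v^*=+\infty$ (all points infeasible in some scenario), I would use large finite penalties and let them grow to show $v_{\text{GDD}}=+\infty$.

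I would state these conventions briefly and treat the finite case as the substantive proof.
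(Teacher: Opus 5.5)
Your proof is correct and follows the paper's own argument. Weak duality comes from evaluating every subproblem at a common $\bm{x}\in X$ and using $\sum_i g_i(\bm{x})=0$. Strong duality comes from the averaging regularizers $g_i=\frac{1}{N}\sum_{j}f_j-f_i$, which turn every subproblem into $\min_{\bm{x}\in X}\frac{1}{N}\sum_j f_j(\bm{x})$.

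The paper tacitly treats the $f_i$ as real-valued with attained minima, so your degenerate-case remarks go beyond it. Of your two options there, only the mass-shifting one works. Setting $g_i:=0$ where some $f_i(\bm{x})=+\infty$ lets the finite scenarios dip below $v^*/N$. The mass-shifting option is the same idea as the large-penalty construction with $M$ in the paper's proof of Theorem~\ref{prop:gddCons}.
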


Notably, we impose no structural assumptions on the objective functions $f_i$. Consequently, the strong duality of the GDD framework holds for general two-stage stochastic programs, e.g., \(\bm{x}\) can be mixed-integer and \(f_i\) can be quadratic. 
\begin{example}[Strong Duality of~\ref{problem:generalized_dual_decomposition}] \label{ex:GDD-strong}
Continuing on Example~\ref{ex:DD-strong-duality}, we define piecewise constant regularizers $g_1, g_2$ through
$$
g_1(\bm{x}) = \begin{cases}
    0 & \text{if} \ x_1 = 0\\
    0 & \text{if} \ x \in \{1\} \times [0, \frac{1}{2}]\\
    \delta - 2 & \text{if} \ x \in \{1\} \times (\frac{1}{2}, 1]
\end{cases}
\quad \text{and} \quad
g_2(\bm{x}) = \begin{cases}
    0 & \text{if} \ x_1 = 0\\
    0 & \text{if} \ x \in \{1\} \times [0, \frac{1}{2}]\\
    2 - \delta & \text{if} \ x \in \{1\} \times (\frac{1}{2}, 1].
\end{cases}
$$
We depict \(g_1, g_2\) in Fig.~\ref{fig:3dG1G2}. It is clear that $g_1(\bm{x}) + g_2(\bm{x}) = 0$ for all $\bm{x} \in X$, 
\begin{figure}
    \centering
    \includegraphics[width=\linewidth]{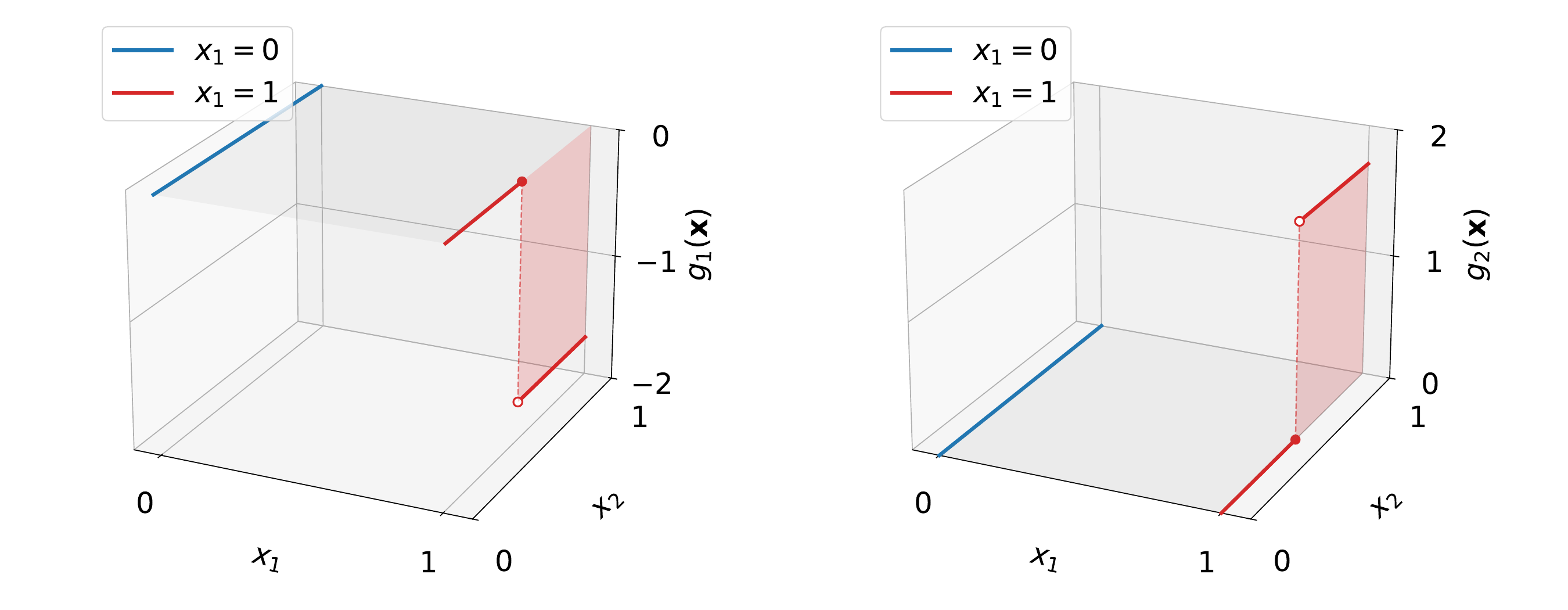}
    \caption{Visualization of $g_1$ and $g_2$ in Example~\ref{ex:GDD-strong} with \(\delta = 0.3\)}
    \label{fig:3dG1G2}
\end{figure}
$$
f_1(\bm{x}) + g_1(\bm{x}) = \begin{cases}
    0 & \text{if} \ x_1 = 0\\
    4x_2 & \text{if} \ x \in \{1\} \times [0, \frac{1}{2}]\\
    \delta & \text{if} \ x \in \{1\} \times (\frac{1}{2}, 1]
\end{cases}
\ \text{and} \
f_2(\bm{x}) + g_2(\bm{x}) = \begin{cases}
    \min\{2\delta x_2, \ \delta\} & \text{if} \ x_1 = 0\\
    \delta & \text{if} \ x \in \{1\} \times [0, \frac{1}{2}]\\
    4-4x_2 & \text{if} \ x \in \{1\} \times (\frac{1}{2}, 1].
\end{cases}
$$
It follows that both scenario subproblems produce an optimal value of zero and thus~\eqref{problem:generalized_dual_decomposition} admits strong duality with $v_{\text{GDD}} = \min_{\bm{x}} \{f_1(\bm{x}) + g_1(\bm{x})\} + \min_{\bm{x}} \{f_2(\bm{x}) + g_2(\bm{x})\} = 0$. \hfill \(\Box\)
\end{example}
Furthermore, the strong duality of~\eqref{problem:generalized_dual_decomposition} can also be interpreted through convexifying the summation of the \(f_i\)'s. This is in contrast with~\eqref{problem:dual_decomposition}, which convexifies each \(f_i\) \emph{before} taking the summation. We remark that this dual representation for~\eqref{problem:dual_decomposition} has been shown in various contexts in the literature~\citep[see, e.g., Proposition 2 of][]{caroe1999dual}.
\begin{proposition}[Dual Representation of~\ref{problem:generalized_dual_decomposition}] \label{prop:GDDDual}
It holds that
\begin{align*}
v_{\operatorname{GDD}} \ = \ \min_{x \in X} \ \overline{\operatorname{co}}\left(\sum_{i=1}^{N} f_i\right)\left(\bm{x}\right),
\end{align*}
where \(\overline{\operatorname{co}}(f)\) refers to the convex envelope of function \(f(\cdot)\). In contrast, it holds that
\[v_{\operatorname{DD}} \ = \ \min_{x \in X} \ \sum_{i=1}^{N} \overline{\operatorname{co}}(f_i)(\bm{x}).\]
\end{proposition}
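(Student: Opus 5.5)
The plan is to prove the two identities separately: the first is a direct consequence of Theorem~\ref{prop:strong_duality}, and the second is a Fenchel-duality argument. Throughout, I regard each $f_i$, and $F := \sum_{i=1}^N f_i$, as extended-real functions on $\mathbb{R}^{n_1}$ that equal $+\infty$ outside $X$. Then $\overline{\operatorname{co}}(\cdot)$ is the largest closed convex minorant, and its effective domain lies in $\operatorname{cl}(\operatorname{conv}(X))$.

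\textbf{First identity.} By Theorem~\ref{prop:strong_duality}, $v_{\text{GDD}} = v^* = \inf_{\bm{x}\in X} F(\bm{x})$, so it suffices to show $\min_{\bm{x}\in X}\overline{\operatorname{co}}(F)(\bm{x}) = \inf_X F$.
\begin{itemize}
\item For ``$\le$'': use $\overline{\operatorname{co}}(F)\le F$ pointwise.
\item For ``$\ge$'': the constant function $\bm{x}\mapsto \inf_X F$ is a closed convex minorant of $F$. Hence $\overline{\operatorname{co}}(F)\ge \inf_X F$ everywhere, and in particular on $X$.
\end{itemize}
This gives equality of the infima. Attainment at a point of $X$ holds whenever $v^*$ is attained, say at $\bm{x}^*$, since then $\overline{\operatorname{co}}(F)(\bm{x}^*)$ is squeezed between $v^*$ and $F(\bm{x}^*)=v^*$.

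\textbf{Second identity.} For fixed multipliers, I will rewrite each scenario term of \eqref{problem:dual_decomposition} via conjugates:
\[
\min_{\bm{x}\in X}\{f_i(\bm{x})+\bm{\lambda}_i^\top\bm{x}\} = -f_i^*(-\bm{\lambda}_i) = -\big(\overline{\operatorname{co}}(f_i)\big)^*(-\bm{\lambda}_i) = \inf_{\bm{x}}\{\overline{\operatorname{co}}(f_i)(\bm{x})+\bm{\lambda}_i^\top\bm{x}\},
\]
using the standard fact that a function and its closed convex envelope share the same conjugate. Then \eqref{problem:dual_decomposition} becomes
\[
v_{\text{DD}} = \sup\Big\{-\sum_i h_i^*(-\bm{\lambda}_i): \sum_i\bm{\lambda}_i=0\Big\}, \qquad h_i:=\overline{\operatorname{co}}(f_i).
\]
The supremum here equals $-\big(h_1^*\,\square\,\cdots\,\square\, h_N^*\big)(0)$, where $\square$ denotes infimal convolution. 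Since the conjugate of $\sum_i h_i$ is the closure of this infimal convolution, we get $v_{\text{DD}} = -(\sum_i h_i)^{*}(0) = \inf_{\bm{x}}\sum_i h_i(\bm{x})$. The interchange step (equality rather than mere inequality, and attainment of the dual maximum) is justified by Fenchel--Rockafellar duality: all $h_i$ share the domain $\operatorname{conv}(X)$ up to closure, so the relative interiors of their domains intersect.

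\textbf{Main obstacle.} The delicate point is where the minimum in the second identity is taken. Fenchel duality naturally delivers $\inf_{\bm{x}\in\operatorname{conv}(X)}\sum_i\overline{\operatorname{co}}(f_i)(\bm{x})$, whereas the statement writes $\min_{\bm{x}\in X}$. I would first prove the identity with $\operatorname{conv}(X)$, which is the mathematically robust form. I would then examine whether the restriction to $X$ is intended as notation, or whether it can be justified by showing that $\sum_i h_i$ attains its minimum over $\operatorname{conv}(X)$ at a point of $X$. The latter can fail in general for nonconvex $X$, so I expect to have to read the statement with $\operatorname{conv}(X)$. For the first identity the distinction does not matter, since there both minima equal $v^*$.

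Minor technical care is also needed for closedness. This is fine if $X$ is compact and the $f_i$ are lower semicontinuous, as in the mixed-integer linear setting.
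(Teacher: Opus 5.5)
Your proof is correct, and it takes a genuinely different route from the paper's.

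\textbf{First identity.} The paper lifts $\min_{\bm{x}\in X}\overline{\operatorname{co}}(\sum_i f_i)(\bm{x})$ to a linear program over probability densities $\rho$ on $X$. It then copies $\rho$ into scenario densities $\rho_i$ subject to the distribution-level constraint $\rho_i=\rho$, and dualizes that constraint with function-valued multipliers $g_i$. It recovers $v_{\text{GDD}}$ once the inner minimization over each $\rho_i$ collapses to $\min_{\bm{x}}\{f_i(\bm{x})+g_i(\bm{x})\}$.

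The merit of that derivation is interpretive. It exhibits GDD as the Lagrangian dual of nonanticipativity imposed on whole distributions. Dualizing only the first moments $\int\bm{x}\,\rho_i=\int\bm{x}\,\rho$ would instead give DD, and this is where ``convexify the sum'' versus ``sum the convexifications'' comes from.

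As written, however, the paper's derivation is formal. Densities do not exist on a mixed-integer $X$; one needs probability measures, and Dirac masses to attain the inner minima. Moreover, the max--min interchange in the infinite-dimensional program is asserted rather than proved, and that interchange is exactly the content of Theorem~\ref{prop:strong_duality}. Your argument, Theorem~\ref{prop:strong_duality} plus the fact that a function and its closed convex envelope have the same infimum, is shorter and fully rigorous. It gives up the structural picture, but it makes plain that the first identity is a restatement of strong duality.

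\textbf{Second identity.} The paper gives no proof and defers to Proposition~2 of Car\o{}e and Schultz. Your conjugate/infimal-convolution argument supplies one. It uses the relative-interior condition of Rockafellar's Theorem~16.4, which also yields attainment of the dual maximum.

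\textbf{Where the second minimum is taken.} Your concern is justified, and you can settle it rather than leave it open. With $\min_{\bm{x}\in X}$ the DD identity is false for nonconvex $X$. The correct statement is $v_{\text{DD}}=\min_{\bm{x}\in\operatorname{conv}(X)}\sum_i\overline{\operatorname{co}}(f_i)(\bm{x})$, consistent with the convex-hull form of the Car\o{}e--Schultz result.

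For a counterexample, take $N=2$, $X=\{0,1\}^2$, $f_1(\bm{x})=|x_1-x_2|$ and $f_2(\bm{x})=|x_1+x_2-1|$. On $[0,1]^2$ these formulas are exactly the convex envelopes. Hence $\sum_i\overline{\operatorname{co}}(f_i)$ equals $1$ at every point of $X$ but $0$ at $(\tfrac12,\tfrac12)$.

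Now compute $v_{\text{DD}}$, writing the multipliers as $\bm{\lambda}=(\lambda_1,\lambda_2)$ and $-\bm{\lambda}$ as in Example~\ref{ex:DD-strong-duality}. Taking $\bm{\lambda}=\bm{0}$ gives $\min_X f_1+\min_X f_2=0$. For any $\bm{\lambda}$, average the first subproblem over $(0,0),(1,1)$ and the second over $(1,0),(0,1)$. This bounds the DD objective by $\tfrac12(\lambda_1+\lambda_2)-\tfrac12(\lambda_1+\lambda_2)=0$.

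Hence $v_{\text{DD}}=0\neq 1=\min_{\bm{x}\in X}\sum_i\overline{\operatorname{co}}(f_i)(\bm{x})$. The $\operatorname{conv}(X)$ version is the statement to prove, and your argument proves it.
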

Before closing this section, we show that any optimal solution to the primal problem~\eqref{problem:primal} is also an optimal solution to each scenario subproblem in~\eqref{problem:generalized_dual_decomposition} under a group of optimal regularizers. Thus, if any subproblem admits a unique optimal solution, this property allows us to recover an optimal solution to~\eqref{problem:primal} from solving~\eqref{problem:generalized_dual_decomposition}.
\begin{proposition}[Recovering Optimal Solution from~\ref{problem:generalized_dual_decomposition}]
    \label{prop:recoverOptimal}
    Let \(\{g_i^*(\cdot): i \in [N]\}\) denote a group of optimal regularizers to~\eqref{problem:generalized_dual_decomposition}. Then, each optimal solution $\bm{x}^*$ to~\eqref{problem:primal} is also optimal for the scenario subproblem $\min_{\bm{x}^i \in X} \{f_i(\bm{x}^i) + g^*_i(\bm{x}^i)\}$ for all \(i \in [N]\).
\end{proposition}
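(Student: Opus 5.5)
The plan is a sandwich argument that combines the weak duality part of Theorem~\ref{prop:strong_duality} with the strong duality value \(v_{\text{GDD}} = v^*\). Fix a group of optimal regularizers \(\{g_i^*(\cdot)\}\), so that \(\sum_{i=1}^N g_i^*(\bm{x}) = 0\) on \(X\) and
\[
\sum_{i=1}^N \min_{\bm{x}^i \in X}\big\{f_i(\bm{x}^i) + g_i^*(\bm{x}^i)\big\} \ = \ v_{\text{GDD}} \ = \ v^*.
\]
Let \(\bm{x}^*\) be optimal to~\eqref{problem:primal}. Write \(m_i := \min_{\bm{x}^i \in X}\{f_i(\bm{x}^i) + g_i^*(\bm{x}^i)\}\) and \(a_i := f_i(\bm{x}^*) + g_i^*(\bm{x}^*)\).

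The key steps, in order, are as follows. First, since \(\bm{x}^* \in X\) is feasible for every scenario subproblem, we have \(a_i \geq m_i\) for each \(i \in [N]\). Second, summing the \(a_i\) and using the zero-sum property of the regularizers at the single point \(\bm{x}^*\) gives
\[
\sum_{i=1}^N a_i \ = \ \sum_{i=1}^N f_i(\bm{x}^*) + \sum_{i=1}^N g_i^*(\bm{x}^*) \ = \ v^* + 0 \ = \ v^*.
\]
Third, combining with strong duality yields \(\sum_i (a_i - m_i) = v^* - v^* = 0\). This is a sum of nonnegative terms, so each \(a_i = m_i\). Hence \(\bm{x}^*\) attains the minimum of every scenario subproblem, which is exactly the claim.

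The only delicate point is the meaning of ``optimal regularizers'': I need the maximum in~\eqref{problem:generalized_dual_decomposition} to be attained, with value exactly \(v^*\). The proposition presupposes that such \(g_i^*\) exist. If needed, existence can be exhibited explicitly in the style of the proof of Theorem~\ref{prop:strong_duality}: take \(g_i^*(\bm{x}) = -f_i(\bm{x}) + \frac{1}{N}\sum_{j=1}^N f_j(\bm{x})\), which sums to zero on \(X\). The subproblem value under this choice is then \(v^*/N\), and these values add up to \(v^*\).

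There is also a minor issue of finiteness. The argument requires \(v^*\) and each \(m_i\) to be finite, so that the subtraction in the third step is legitimate. This follows because \(v^*\) is attained at \(\bm{x}^*\), and because each \(m_i \le a_i < \infty\) while \(\sum_i m_i = v^*\) is finite. Beyond this bookkeeping, I expect no real obstacle, since the argument is a complementary-slackness-type equality forcing.
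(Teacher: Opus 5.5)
Your proof is correct and essentially matches the paper's argument. The paper proceeds by contradiction, showing that a strict inequality $a_j > m_j$ for some $j$ would force $v^* > v_{\text{GDD}}$. That is the same as your direct observation that the nonnegative gaps $a_i - m_i$ sum to $v^* - v_{\text{GDD}} = 0$, so each gap must vanish.
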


\section{Generalized Dual Decomposition Algorithms}\label{sec:GDDMIPAlgDesign}

The~\eqref{problem:generalized_dual_decomposition} formulation is infinite-dimensional because of the functional regularizers \(g_i(\cdot)\) and potentially intractable. However, the regularizers actually admit an optimal solution in closed-form.
\begin{proposition} \label{prop:optimal-regularizer}
Let \(g^*_i(\bm{x}) := \sum_{i=1}^N f_i(\bm{x})/N - f_i(\bm{x})\) for all \(i \in [N]\). Then, it holds that
\[
v^* \ = \ v_{\operatorname{GDD}} \ = \ \sum_{i=1}^N \ \min_{\bm{x}^i \in X} \Big\{f_i(\bm{x}^i) + g^*_i(\bm{x}^i)\Big\}.
\]
\end{proposition}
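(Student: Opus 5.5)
The plan is to verify directly that the proposed $g^*_i$ are feasible for \eqref{problem:generalized_dual_decomposition} and attain the value $v^*$, and then to invoke Theorem~\ref{prop:strong_duality} for both the upper bound and the equality $v_{\operatorname{GDD}} = v^*$.

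\textbf{Feasibility.} For every $\bm{x} \in X$ we have
\[
\sum_{i=1}^N g^*_i(\bm{x}) = N \cdot \frac{1}{N}\sum_{j=1}^N f_j(\bm{x}) - \sum_{i=1}^N f_i(\bm{x}) = 0 .
\]
Each $g^*_i$ maps $X$ to $\mathbb{R}$ provided each $f_i$ is finite on $X$. This is implicit in the requirement $g_i: X \to \mathbb{R}$, and I would state it as the standing assumption (relatively complete recourse). Hence $\{g^*_i\}$ is feasible for \eqref{problem:generalized_dual_decomposition}.

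\textbf{Value.} Substituting gives $f_i(\bm{x}^i) + g^*_i(\bm{x}^i) = \frac{1}{N}\sum_{j=1}^N f_j(\bm{x}^i)$. The scenario terms become identical: each subproblem is
\[
\min_{\bm{x}^i \in X} \frac{1}{N}\sum_{j} f_j(\bm{x}^i) = \frac{v^*}{N}.
\]
Summing over $i \in [N]$ yields exactly $v^*$. If the minimum in \eqref{problem:primal} is only an infimum, the same identity holds with infima, which is the convention used in Theorem~\ref{prop:strong_duality}.

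\textbf{Conclusion.} By the weak-duality part of Theorem~\ref{prop:strong_duality}, no feasible family of regularizers exceeds $v^*$. By its strong-duality part, $v_{\operatorname{GDD}} = v^*$. Since $\{g^*_i\}$ is feasible and attains $v^*$, it attains $v_{\operatorname{GDD}}$, which gives the displayed chain of equalities. Alternatively, this computation alone reproves $v_{\operatorname{GDD}} \ge v^*$, and combined with weak duality it gives strong duality independently.

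The argument is essentially routine. The only delicate point I expect is the well-definedness and finiteness of $f_i$ on $X$, which is needed so that $g^*_i$ is real-valued and the expression $\sum_j f_j/N - f_i$ is not of the form $\infty - \infty$. I would handle it by the standing assumption above.
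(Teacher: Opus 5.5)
Your proof is correct and follows essentially the same route as the paper. It checks that $\sum_i g^*_i \equiv 0$ on $X$ and notes that each subproblem objective reduces to $\frac{1}{N}\sum_j f_j$, so the sum of the $N$ identical minima is $v^*$. It then cites Theorem~\ref{prop:strong_duality} for $v_{\operatorname{GDD}} = v^*$; your remark that the $f_i$ must be finite on $X$ is a sensible point the paper leaves implicit.
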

Nonetheless, applying the optimal regularizers in Proposition~\ref{prop:optimal-regularizer} yields an objective function \(f_i(\bm{x}^i) + g^*_i(\bm{x}^i) = \sum_{i=1}^N f_i(\bm{x})/N\) for each \(i \in [N]\), transforming all scenario subproblems back to the original~\eqref{problem:primal} formulation and jeopardizing the hope of solving them efficiently in parallel. Fortunately, an approximation of \(g^*_i(\cdot)\) can already produce near-optimal solutions (see Proposition~\ref{prop:simpleApprox}) or even certifies an optimal solution (see Example~\ref{ex:GDD-strong}). This section proposes algorithms that iteratively improve approximations for \(g^*_i(\cdot)\) to achieve global optimum. Section~\ref{subsec:gdd-algorithm} provides a baseline algorithm and derives sufficient conditions on these approximate regularizers to achieve global optimum. In addition, we provide concrete examples of the approximate regularizers that satisfy these sufficient conditions, including piecewise constant regularizers, reverse norm cuts of~\cite{ahmed2022stochastic}, and ReLU cuts of~\cite{deng2024relu}. Section~\ref{subsec:improvement} provides algorithmic improvement strategies based on the piecewise constant regularizers.

\begin{algorithm}
\caption{An Iterative Algorithm for~\eqref{problem:generalized_dual_decomposition}}
\label{alg:MIPGDD}
\begin{algorithmic}[1]
\State \textbf{Initialize}: $g_i(\cdot, \cdot) = 0,\; \mathit{lb}=-\infty,\; \mathit{ub}=\infty,\; t=0,\; \mathcal{X}^0 = \emptyset$.
\While{(stopping criteria are not fulfilled)}
    \State Solve $\min_{\bm{x} \in X} \big\{f_i(\bm{x}) + g_i\big(\mathcal{X}^t, \bm{x}\big)\big\}$ and collect optimal solutions $\bm{x}^{i*}$ for all $i \in [N]$. \label{alg:solveSubProblem}
    \State $\mathit{ub} \leftarrow \min\Big\{\mathit{ub},\; \min_{i \in [N]} \big\{\sum_{j=1}^N f_{j}(\bm{x}^{i*})\big\} \Big\}$.
    \State $\mathit{lb} \leftarrow \max\left\{\mathit{lb},\; \sum_{i=1}^N \Big(f_i(\bm{x}^{i*}) + g_i\big(\mathcal{X}^t, \bm{x}^{i*}\big)\Big)\right\}$.
    \State Update \(g_i(\mathcal{X}^t, \bm{x})\) with \(\big\{\bm{x}^{i*}, i \in [N]\big\}\). \label{algo:gdd-update}
    \State \(\mathcal{X}^t \leftarrow \mathcal{X}^t \cup \big\{\bm{x}^{i*}, i \in [N]\big\}\), \(t \leftarrow t + 1\).
\EndWhile
\State \Return $\mathit{lb}, \mathit{ub}$
\end{algorithmic}
\end{algorithm}

\subsection{An Iterative Algorithm} \label{subsec:gdd-algorithm}
We iteratively update a group of parametric regularizers \(\{g_i(\mathcal{X}^t, \bm{x}): i \in [N]\}\), where \(\mathcal{X}^t\) collects the solutions to scenario subproblems by iteration \(t\). We present a baseline framework in Algorithm~\ref{alg:MIPGDD}. 
To enable its convergence to global optimum of~\eqref{problem:generalized_dual_decomposition}, we give sufficient conditions on the update of the regularizers (i.e., step~\ref{algo:gdd-update} of Algorithm~\ref{alg:MIPGDD}) in the following theorem.
\begin{theorem}[Sufficient Conditions for Global Optimum]\label{prop:sufficientConverge}
For all \(i \in [N]\), denote by \(\{\bm{x}^i_t: t \geq 0\}\) the sequence of optimal solutions to scenario subproblem \(i\) produced by Algorithm~\ref{alg:MIPGDD}. Suppose that the regularizers \(\{g_i(\mathcal{X}^t, \bm{x}): i \in [N]\}\) satisfy the following two conditions:
\begin{enumerate}
    \item in each iteration \(t\), it holds that $\sum_{i \in [N]} g_i(\mathcal{X}^t, \bm{x}) \leq 0$ for all $\bm{x} \in X$;
    \item for all \(i \in [N]\), any subsequence of \(\{\bm{x}^i_t: t \geq 0\}\) has a converging subsequence, denoted by \(\{\bm{x}^i_{(r)}: r \geq 0\}\), such that $\liminf_{r\rightarrow \infty} \big(f_i(\bm{x}^i_{(r)}) + g_i^*(\bm{x}^{i}_{(r)})\big) \leq \liminf_{r \rightarrow \infty} \big(f_i(\bm{x}^i_{(r)}) + g_i(\mathcal{X}^{t(r)}, \bm{x}^i_{(r)})\big)$, where \(\bm{x}^i_{(r)}\) is an optimal solution to scenario subproblem \(i\) in iteration \(t(r)\).   
\end{enumerate}
Then, the sequence of lower bounds given by Algorithm~\ref{alg:MIPGDD} converges to \(v^*\).
\end{theorem}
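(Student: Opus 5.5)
The plan is to sandwich the lower bounds between two quantities that both tend to \(v^*\). Write \(L_t := \sum_{i=1}^N \big(f_i(\bm{x}^i_t) + g_i(\mathcal{X}^t, \bm{x}^i_t)\big)\) for the value computed in iteration \(t\), so that \(\mathit{lb}_t = \max_{s \le t} L_s\).

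\textbf{Upper bound.} Condition 1 gives, for every \(\bm{x} \in X\),
\[
\sum_i \min_{\bm{x}^i\in X}\{f_i(\bm{x}^i)+g_i(\mathcal{X}^t,\bm{x}^i)\}\le \sum_i \big(f_i(\bm{x})+g_i(\mathcal{X}^t,\bm{x})\big)\le \sum_i f_i(\bm{x}).
\]
Minimizing over \(\bm{x}\) yields \(L_t\le v^*\). This is just the weak-duality argument of Theorem~\ref{prop:strong_duality} with the equality \(\sum_i g_i = 0\) relaxed to \(\le 0\). Hence \(\mathit{lb}_t\) is nondecreasing and bounded above by \(v^*\), so it converges to some \(\ell\le v^*\). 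It remains to show \(\ell \ge v^*\), and since \(\mathit{lb}_t \ge L_t\) it suffices to show \(\limsup_t L_t \ge v^*\).

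\textbf{Key observation.} By Proposition~\ref{prop:optimal-regularizer}, \(f_i+g_i^* = \frac1N\sum_j f_j\) for every \(i\). Therefore, for any points \(\bm{x}^i\in X\),
\[
\sum_i \big(f_i(\bm{x}^i)+g_i^*(\bm{x}^i)\big)=\frac1N\sum_i \sum_j f_j(\bm{x}^i)\ \ge\ v^*,
\]
because each inner sum \(\sum_j f_j(\bm{x}^i)\) is the objective of~\eqref{problem:primal} at a feasible point and so is at least \(v^*\).

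\textbf{Passing to the limit.} Take the full index sequence \(t\). Apply Condition 2 to scenario \(1\) to extract a converging subsequence along which the \(\liminf\) inequality holds. Then apply Condition 2 to scenario \(2\) along that subsequence, which is allowed because the condition covers \emph{any} subsequence, and continue through scenario \(N\). This nested extraction produces a single subsequence \(t(r)\) on which the inequality holds simultaneously for all \(i\).

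One caveat: \(\liminf\) is not additive. To handle this, first pass to a further subsequence along which each right-hand side \(f_i(\bm{x}^i_{(r)})+g_i(\mathcal{X}^{t(r)},\bm{x}^i_{(r)})\) converges in \(\overline{\mathbb{R}}\) to its \(\liminf\). The inequality is preserved under this step, since passing to a subsequence can only raise a \(\liminf\) on the left while the right side is unchanged. On the final subsequence we then obtain
\[
\lim_r L_{t(r)}=\sum_i \lim_r\big(f_i+g_i(\mathcal{X}^{t(r)},\cdot)\big)(\bm{x}^i_{(r)})\ \ge\ \sum_i\liminf_r \big(f_i+g_i^*\big)(\bm{x}^i_{(r)})\ \ge\ \liminf_r\sum_i \big(f_i+g_i^*\big)(\bm{x}^i_{(r)})\ \ge\ v^*.
\]
This gives \(\ell\ge \lim_r L_{t(r)} \ge v^*\), which closes the argument.

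\textbf{Main obstacle.} The delicate part is the subsequence bookkeeping: making Condition 2 hold simultaneously for all scenarios, and justifying the interchange of sum and \(\liminf\). One must also handle the possibility that some terms are \(\pm\infty\). If \(v^*\) is finite, each \(L_t\le v^*\) and each \(f_i+g_i(\mathcal{X}^t,\cdot)\) is bounded below along the subsequence, which should exclude the problematic \(\infty-\infty\) cases. I would also check whether the \(\min\) in the subproblems is attained. The algorithm presupposes optimal solutions \(\bm{x}^{i*}\), so I would take attainment as given.
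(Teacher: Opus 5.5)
Your skeleton matches the paper's proof: weak duality under $\sum_i g_i \le 0$ for the upper bound, nested extraction of subsequences across scenarios, and a $\liminf$ chain closed by $f_i+g_i^*=\frac{1}{N}\sum_j f_j$. However, the limit-passing step is wrong as written in two places.

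First, after you pass to a further subsequence on which the right-hand sides of Condition~2 converge, the right-hand $\liminf$ is unchanged, while the left-hand $\liminf$ can only go \emph{up}. That can destroy a $\le$ inequality rather than preserve it. The same objection applies to your claim that the nested extraction keeps Condition~2 valid for the earlier scenarios, since both $\liminf$'s may rise; the paper's ``without loss of generality'' glosses over this too.

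Second, the step $\sum_i \liminf_r (f_i+g_i^*)(\bm{x}^i_{(r)}) \ge \liminf_r \sum_i (f_i+g_i^*)(\bm{x}^i_{(r)})$ is backwards. The $\liminf$ is superadditive, so in general only $\le$ holds.

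Both issues disappear once you use two facts you already have.
\begin{enumerate}
\item Your worry that $\liminf$ is not additive is misplaced, because superadditivity points the way you need:
\[
\ell \ \ge\ \liminf_r L_{t(r)} \ \ge\ \sum_i \liminf_r \big(f_i+g_i(\mathcal{X}^{t(r)},\cdot)\big)(\bm{x}^i_{(r)}).
\]
No convergence-forcing subsequence is required; this is exactly the paper's chain.
\item Apply your key observation scenario by scenario rather than summed. For every $\bm{x}\in X$, $f_i(\bm{x})+g_i^*(\bm{x})=\frac{1}{N}\sum_j f_j(\bm{x})\ge v^*/N$, so the left side of Condition~2 is at least $v^*/N$ along \emph{any} subsequence.
\end{enumerate}
Thus stage $i$ of the extraction yields $\liminf_r \big(f_i+g_i(\mathcal{X}^{t(r)},\cdot)\big)(\bm{x}^i_{(r)}) \ge v^*/N$. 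This bound, unlike Condition~2 itself, survives all later extractions, because passing to a subsequence only raises a $\liminf$. Summing over $i$ on the final common subsequence gives $\ell\ge v^*$, with no $\infty-\infty$ issue since each term is at least $v^*/N$.
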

In the remainder of this section, we provide examples that satisfy these conditions in Sections~\ref{subsec:piecewise}--\ref{subsec:binary-tender}.


\subsubsection{Piecewise constant regularizers.} \label{subsec:piecewise}

First, we consider piecewise constant regularizers. Specifically, we specify the set of solutions \(\mathcal{X}^t\) by iteration \(t\) of Algorithm~\ref{alg:MIPGDD} to be \(\mathcal{X}^t := \{\bm{x}_{(k)}: k \in [K_t]\}\) and denote by \(K_t := |\mathcal{X}^t|\) the cardinality of \(\mathcal{X}^t\). Then, we consider regularizers
\begin{equation}
g_i\left(\mathcal{X}^t, \bm{x}\right) \ := \ \sum_{k=1}^{K_t} g_i^*(\bm{x}_{(k)}) \mathbbm{1}\big(\bm{x} \in \mathcal{P}_k^t\big) \label{eq:simpleGFunction}
\end{equation}
with \(\{\mathcal{P}_k^t: k \in [K_t]\}\) forming a disjoint partition of \(X\) in each iteration \(t\), i.e., \(X = \bigcup_{k=1}^{K_t} \mathcal{P}_k^t\) and \(\mathcal{P}_k^t \cap \mathcal{P}_j^t = \varnothing\) whenever \(k \neq j\). In addition, we notice that \(g_i^*(\bm{x}_{(k)})\) is a constant and can be computed by evaluating all \(\{f_i(\bm{x}_{(k)}): i \in [N]\}\) in parallel.
\begin{example} \label{ex:piecewise-regularizer}
Continuing on Examples~\ref{ex:DD-strong-duality}--\ref{ex:GDD-strong}, we follow Proposition~\ref{prop:optimal-regularizer} to define optimal regularizers
$$
g^*_1(\bm{x}) = \begin{cases}
    \delta x_2 & \text{if} \ x \in \{0\} \times [0, \frac{1}{2})\\
    \frac{1}{2}\delta & \text{if} \ x \in \{0\} \times [\frac{1}{2}, 1]\\
    \frac{1}{2}\delta - 2x_2 & \text{if} \ x_1 =1
\end{cases}
\quad \text{and} \quad
g^*_2(\bm{x}) = \begin{cases}
    -\delta x_2 & \text{if} \ x \in \{0\} \times [0, \frac{1}{2})\\
    -\frac{1}{2}\delta & \text{if} \ x \in \{0\} \times [\frac{1}{2}, 1]\\
    -\frac{1}{2}\delta + 2x_2 & \text{if} \ x_1 =1.
\end{cases}
$$
Then, the piecewise constant regularizers \(g_1, g_2\) in Example~\ref{ex:GDD-strong} can be specified by \(K_t = 3\) and
\begin{align*}
g_1(\bm{x}) \ = \ \sum_{k=1}^3 g_1^*(\bm{x}_{(k)}) \mathbbm{1}\big(\bm{x} \in \mathcal{P}_k\big), \quad & g_2(\bm{x}) \ = \ \sum_{k=1}^3 g_2^*(\bm{x}_{(k)}) \mathbbm{1}\big(\bm{x} \in \mathcal{P}_k\big) \\
\operatorname{with} \qquad \qquad \mathcal{P}_1 \ = \ \{0\}\times [0,1], \quad & \bm{x}_{(1)} \ = \ [0,0]^{\top}; \\[0.5em]
\mathcal{P}_2 \ = \ \{1\}\times \Big[0,\frac{1}{2}\Big), \quad & \bm{x}_{(2)} \ = \ \Big[1,\frac{1}{4}\delta\Big]^{\top}; \\[0.5em]
\mathcal{P}_3 \ = \ \{1\}\times \Big[\frac{1}{2},1\Big], \quad & \bm{x}_{(3)} \ = \ \Big[1,1-\frac{1}{4}\delta\Big]^{\top}.
\end{align*}
We depict \(g_1^*\) and \(g_1\) in Fig.~\ref{fig:g1VsG1Star}. From this figure, we observe that \(g_1\) (respectively, \(g_2\)) coincides with \(g_1^*\) (respectively, \(g_2^*\)) only at \(\bm{x}_{(1)}, \bm{x}_{(2)}, \bm{x}_{(3)}\), but already achieves global optimum. \hfill \(\Box\)

\begin{figure}[htbp]
    \centering
    \begin{subfigure}{0.48\textwidth}
        \centering
        \includegraphics[width=\linewidth]{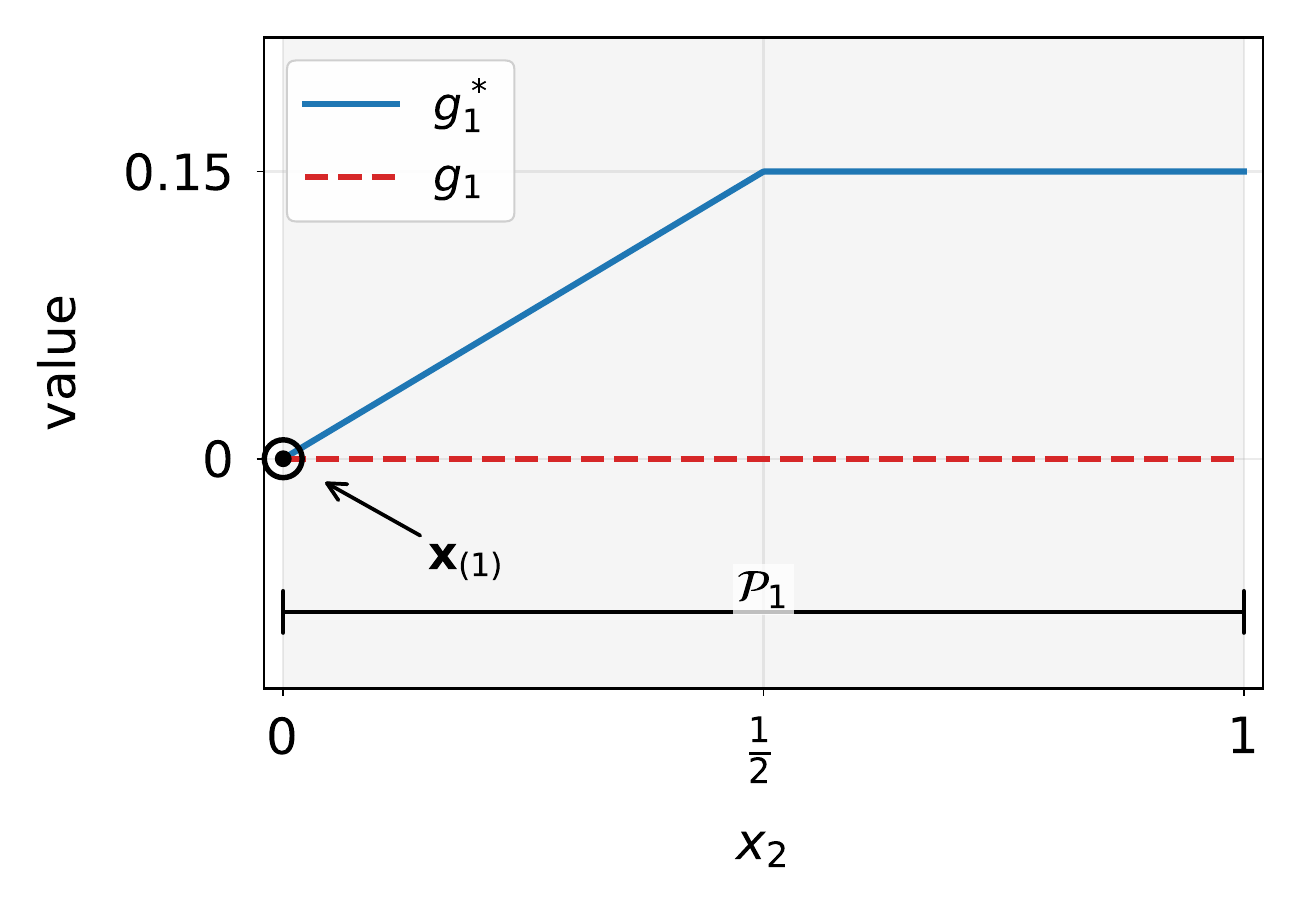}
        \caption{$x_1 = 0$}
        \label{fig:x1-0}
    \end{subfigure}
    \hfill
    \begin{subfigure}{0.48\textwidth}
        \centering
        \includegraphics[width=\linewidth]{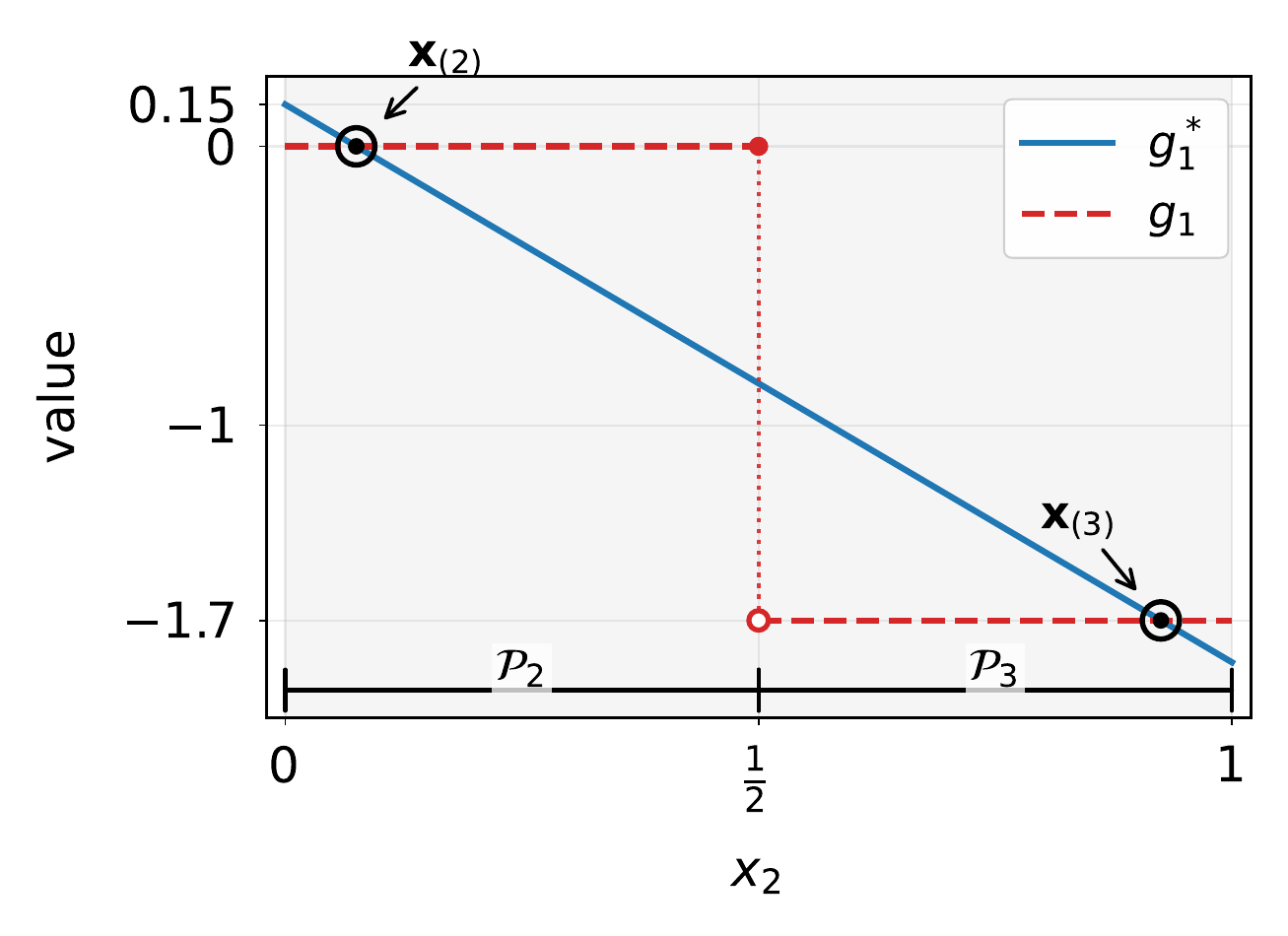}
        \caption{$x_1 = 1$}
        \label{fig:x1-1}
    \end{subfigure}
    \caption{$g_1^*$ versus $g_1$ with $\delta = 0.3$.}
    \label{fig:g1VsG1Star}
\end{figure}

\end{example}
In this paper, we construct \(\mathcal{P}_k^t\) through the Voronoi partition of \(X\). That is, \(\bm{x} \in \mathcal{P}_k^t\) only if \(\bm{x}\) is (weakly) closer to \(\bm{x}_{(k)}\) than to any other element of \(\mathcal{X}^t\). Specifically, for each \(k \in [K_t]\), \(\operatorname{cl}(\mathcal{P}_k^t)\) admits a polyhedral representation:
\begin{align*}
\operatorname{cl}(\mathcal{P}_k^t) \ := & \ \left\{\bm{x} \in X: \ \|\bm{x} - \hat{\bm{x}}_{(k)}\|_2 \leq \|\bm{x} - \hat{\bm{x}}_{(j)}\|_2, \ \forall j \in [K_t]\right\} \\[0.5em]
= & \ \left\{\bm{x} \in X: \ 2(\hat{\bm{x}}_{(j)} - \hat{\bm{x}}_{(k)})^\top \bm{x} \leq \hat{\bm{x}}_{(j)}^\top \hat{\bm{x}}_{(j)} - \hat{\bm{x}}_{(k)}^\top \hat{\bm{x}}_{(k)}, \ \forall j \in [K_t] \right\}.
\end{align*}
We remark that the membership of the boundary points of each \(\mathcal{P}^t_k\) may be arbitrary, which implies that \(g_i(\mathcal{X}^t, \bm{x})\) may not be lower-semicontinuous. For instance, \(g_1\) in Example~\ref{ex:piecewise-regularizer} is upper-semicontinuous. Nevertheless, replacing \(g_i(\mathcal{X}^t, \bm{x})\) with its lower-semicontinuous hull, denoted by \(\operatorname{lsc}(g_i)(\mathcal{X}^t, \bm{x})\), preserves the optimal value of each scenario subproblem for continuous \(f_i\). In addition, \(\operatorname{lsc}(g_i)(\mathcal{X}^t, \bm{x})\) admits a MILP representation.
\begin{proposition} \label{prop:gi-milp}
Suppose that \(\operatorname{cl}(\mathcal{P}_k^t) := \{\bm{x} \in X: \bm{A}_k\bm{x} \leq \bm{b}_k\}\) for all \(k \in [K_t]\). Then, for each \(i \in [N]\), \(\operatorname{lsc}(g_i)(\mathcal{X}^t, \bm{x})\) admits the following mixed-integer linear representation:
\begin{align*}
\operatorname{lsc}(g_i)(\mathcal{X}^t, \bm{x}) \ = \ \min_{\bm{w}, \bm{x}} \ & \ \sum_{k=1}^{K_t} g_i^*(\bm{x}_{(k)})w_k \\
\operatorname{s.t.} \ & \ \sum_{k=1}^{K_t} w_k = 1 \\
& \ \bm{A}_k\bm{x} \leq \bm{b}_k + M_k(1-w_k)\bm{e} \quad \forall k \in [K_t] \\
& \ w_k \in \{0, 1\} \quad \forall k \in [K_t],
\end{align*}
where $\displaystyle M_k := \max_{i \in \operatorname{dim}(\bm{A}_k)}\max_{\bm{x} \in X} \big\{\bm{e}_i^{\top}(\bm{A}_k\bm{x} - \bm{b}_k)\big\}$. In addition, if \(f_i(\cdot)\) is continuous then it holds that
\[
\min_{\bm{x} \in X} \Big\{f_i(\bm{x}) + g_i\big(\mathcal{X}^t, \bm{x}\big)\Big\} \ = \ \min_{\bm{x} \in X} \Big\{f_i(\bm{x}) + \operatorname{lsc}(g_i)\big(\mathcal{X}^t, \bm{x}\big)\Big\}.
\]
\end{proposition}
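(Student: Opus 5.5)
The proof has two parts. First we show that the MILP is the lower-semicontinuous hull of the piecewise constant $g_i$. Then we show that replacing $g_i$ by this hull does not change the optimal value of the scenario subproblem when $f_i$ is continuous.

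For the MILP representation, we define $h_i(\bm{x})$ as the optimal value of the MILP for fixed $\bm{x}$. By the choice of $M_k$:
\begin{itemize}
\item if $w_k=0$, the constraint $\bm{A}_k\bm{x}\le \bm{b}_k+M_k\bm{e}$ holds for every $\bm{x}\in X$, so it is redundant;
\item if $w_k=1$, it forces $\bm{x}\in\operatorname{cl}(\mathcal{P}_k^t)$.
\end{itemize}
Hence the feasible choices of $\bm{w}$ are exactly the unit vectors $\bm{e}_k$ with $\bm{x}\in\operatorname{cl}(\mathcal{P}_k^t)$, and
\[
h_i(\bm{x}) = \min\{g_i^*(\bm{x}_{(k)}) : k \in [K_t],\ \bm{x}\in \operatorname{cl}(\mathcal{P}_k^t)\}.
\]
Since the $\mathcal{P}_k^t$ cover $X$, this set of indices is nonempty.

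We then check that $h_i=\operatorname{lsc}(g_i)$, i.e.\ that $h_i(\bm{x})=\liminf_{\bm{y}\to\bm{x},\,\bm{y}\in X} g_i(\mathcal{X}^t,\bm{y})$, in three steps.
\begin{enumerate}
\item $h_i$ is lower semicontinuous. Near a point $\bm{x}$, only the closed pieces containing $\bm{x}$ can contain nearby points, because the remaining finitely many closed pieces are at positive distance from $\bm{x}$. So the minimum over active indices can only increase locally.
\item $h_i\le g_i$, because $\bm{x}\in\mathcal{P}_k^t\subseteq\operatorname{cl}(\mathcal{P}_k^t)$.
\item For any lsc minorant $\ell\le g_i$ and any $k$ with $\bm{x}\in\operatorname{cl}(\mathcal{P}_k^t)$, take $\bm{y}_n\in\mathcal{P}_k^t$ with $\bm{y}_n\to\bm{x}$. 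Then $\ell(\bm{x})\le\liminf \ell(\bm{y}_n)\le g_i^*(\bm{x}_{(k)})$, so $\ell\le h_i$.
\end{enumerate}
Steps 1–3 show that $h_i$ is the largest lsc minorant of $g_i$, that is, $h_i=\operatorname{lsc}(g_i)$.

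For the value equality, the inequality "$\ge$" is immediate from $\operatorname{lsc}(g_i)\le g_i$. For "$\le$", we use that the right-hand side is a minimization of $f_i(\bm{x})+h_i(\bm{x})$ over $\bm{x}\in X$ and, after taking the minimum over $k$ inside $h_i$, equals
\[
\min_k \min_{\bm{x}\in\operatorname{cl}(\mathcal{P}_k^t)}\big\{f_i(\bm{x})+g_i^*(\bm{x}_{(k)})\big\}.
\]
Fix $k$ and any $\bm{x}\in\operatorname{cl}(\mathcal{P}_k^t)$, and pick $\bm{y}_n\in\mathcal{P}_k^t$ with $\bm{y}_n\to\bm{x}$. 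Then
\[
f_i(\bm{y}_n)+g_i(\mathcal{X}^t,\bm{y}_n)=f_i(\bm{y}_n)+g_i^*(\bm{x}_{(k)}) \;\longrightarrow\; f_i(\bm{x})+g_i^*(\bm{x}_{(k)})
\]
by continuity of $f_i$. Hence the infimum of the original subproblem is at most $f_i(\bm{x})+h_i(\bm{x})$ for every $\bm{x}$, which gives the equality of optimal values.

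The main obstacle is the attainment of "$\min$" on the left-hand side, since $g_i$ itself need not be lsc. We handle this by reading the left-hand side as an infimum, as the paper implicitly does. When $X$ is compact, which the finiteness of $M_k$ already presumes, the right-hand side is attained by lsc plus compactness, so the common value is a minimum on the right and an infimum equal to it on the left. Care is also needed that $\mathcal{P}_k^t$ is nonempty with closure as stated for every relevant $k$; empty pieces can simply be dropped.
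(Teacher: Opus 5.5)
Your proof is correct and follows essentially the same route as the paper. Both identify $\operatorname{lsc}(g_i)(\mathcal{X}^t,\bm{x})$ with $\min_{k:\,\bm{x}\in\operatorname{cl}(\mathcal{P}_k^t)} g_i^*(\bm{x}_{(k)})$ and then use continuity of $f_i$ to show that passing to the lower-semicontinuous hull leaves the subproblem infimum unchanged; the differences are cosmetic: you characterize the hull as the largest lsc minorant rather than through the paper's $\lim_{\epsilon\to 0}\inf_{X\cap B(\bm{x},\epsilon)}$ formula, and you explicitly verify the big-$M$ encoding, which the paper leaves implicit.
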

Following Proposition~\ref{prop:gi-milp}, we can solve the scenario subproblems in step~\ref{alg:solveSubProblem} of Algorithm~\ref{alg:MIPGDD} as MILPs, if \(f_i(\bm{x})\) arises from a two-stage stochastic MILP with complete continuous recourse (see Assumption~\ref{assump:CCR}). We shall discuss how to solve these subproblems more efficiently in Section~\ref{subsec:improvement} and demonstrate these improvement strategies numerically in Section~\ref{sec:numerical}. Before closing this section, we prove theoretical guarantees of the piecewise constant regularizers under technical assumptions.
\begin{assumption}[Complete Continuous Recourse; CCR] \label{assump:CCR}
For each \(i \in [N]\), the recourse function \(f_i(\cdot)\) is defined through
\begin{align*}
f_i(\bm{x}) \ := \ \min_{\bm{y}, \bm{z}} \ & \ \ell(\bm{y}, \bm{z}) \\
\operatorname{s.t.} \ & \ L(\bm{x}, \bm{y}, \bm{z}) \leq \bm{0} \\
& \ \bm{y} \in Y \cap \mathbb{Z}^{p_2}, \bm{z} \in Z \cap \mathbb{R}^{n_2-p_2}
\end{align*}
and satisfies either one of the following two conditions:
\begin{enumerate}
\item \(\ell:\mathbb{R}^{n_2}\rightarrow \mathbb{R}\) and \(L:\mathbb{R}^{n_1+n_2}\rightarrow \mathbb{R}^{m_2}\) are affine, \(Y\) and \(Z\) are polyhedral, and for all \(\bm{x} \in X\) and \(\bm{y} \in Y \cap \mathbb{Z}^{p_2}\) there exists a \(\bm{z} \in Z \cap \mathbb{R}^{n_2-p_2}\) such that \(L(\bm{x}, \bm{y}, \bm{z}) \leq 0\);
\item \(\ell:\mathbb{R}^{n_2}\rightarrow \mathbb{R}\) and \(L:\mathbb{R}^{n_1+n_2}\rightarrow \mathbb{R}^{m_2}\) are Lipschitz continuous, $L$ is convex in $\bm{z}$ (but not necessarily in \(\bm{x}\) or \(\bm{y}\)), \(Y\) is compact, \(Z\) is compact and convex, and there exists a $\delta > 0$ such that for all \(\bm{x} \in X\) and \(\bm{y} \in Y \cap \mathbb{Z}^{p_2}\) there exists a \(\bm{z} \in Z \cap \mathbb{R}^{n_2-p_2}\) such that \(L(\bm{x}, \bm{y}, \bm{z}) < -\delta\).
\end{enumerate}
\end{assumption}
\begin{assumption}[Compactness] \label{assumption:compact}
The feasible region $X$ of~\eqref{problem:primal} is compact.
\end{assumption}

The affine case of Assumption~\ref{assump:CCR} is standard in the literature~\citep[see, e.g.,][]{ahmed2022stochastic} and establishes the Lipschitz continuity of $f_i(\bm{x})$ (see Lemma~\ref{lemma:LipschitzCont} in Appendix~\ref{sec:LipschitzCont}). We extend CCR to the general nonlinear case through Robinson's convex extension of the Hoffman's Lemma~\citep{robinson1975application} to simplify the subsequent analyses. In practical applications of~\eqref{problem:primal}, we can always fulfill this assumption by incorporating slack variables in second-stage constraints with a sufficiently large penalty. In addition, by Theorem~4 of~\cite{feizollahi2017exact}, there exists a finite penalty coefficient such that the optimal value of~\eqref{problem:primal} remains the same after incorporating slack variables to fulfill its CCR. We are now ready to show theoretical guarantees for the piecewise constant regularizers.
\begin{proposition}[Near-Optimality] \label{prop:simpleApprox}
Under Assumptions~\ref{assump:CCR}--\ref{assumption:compact}, for any \(\epsilon > 0\), there exist piecewise constant regularizers $\{g_i(\bm{x}): \ i\in [N]\}$ defined on a finite partition $\left\{\mathcal{P}_k\right\}_{k=1}^K$ of $X$ such that $$
\sum_{i=1}^N \min_{\bm{x} \in X} \left\{f_i(\bm{x}) + g_i(\bm{x})\right\} \ \geq \ v^* - \epsilon.
$$
\end{proposition}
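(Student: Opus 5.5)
The plan is to use the piecewise constant regularizers \eqref{eq:simpleGFunction} built from the optimal regularizers $g_i^*$ of Proposition~\ref{prop:optimal-regularizer}, on a partition fine enough that every $f_i$ varies little on each cell. The construction relies on uniform continuity. Under Assumption~\ref{assump:CCR}, each $f_i$ is Lipschitz continuous on $X$ (Lemma~\ref{lemma:LipschitzCont} in the affine case, and Robinson's extension of Hoffman's lemma in the nonlinear case); denote the constants by $L_i$. Hence $F:=\sum_{i=1}^N f_i$ is Lipschitz with constant $L:=\sum_i L_i$, and each $g_i^*=F/N-f_i$ is Lipschitz as well.

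\textbf{Step 1 (partition).} Set $\eta:=\epsilon/(2\sum_i L_i+L)$, or any comparable choice. By Assumption~\ref{assumption:compact}, $X$ is compact, so it is covered by finitely many balls of radius $\eta$ centered at points $\bm{x}_{(1)},\dots,\bm{x}_{(K)}\in X$. Take the Voronoi partition $\{\mathcal{P}_k\}_{k=1}^K$ of $X$ induced by these centers, breaking ties arbitrarily so that the cells are disjoint. Every $\bm{x}\in\mathcal{P}_k$ then satisfies $\|\bm{x}-\bm{x}_{(k)}\|\le\eta$, and each $\bm{x}_{(k)}\in\mathcal{P}_k$ after suitable tie-breaking. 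Define $g_i(\bm{x}):=\sum_k g_i^*(\bm{x}_{(k)})\mathbbm{1}(\bm{x}\in\mathcal{P}_k)$. Because $\sum_i g_i^*\equiv0$ pointwise, we get $\sum_i g_i(\bm{x})=0$ on $X$, so these regularizers are feasible in \eqref{problem:generalized_dual_decomposition}.

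\textbf{Step 2 (bound each subproblem).} Fix $i$ and let $\bm{x}\in\mathcal{P}_k$. Then
\[
f_i(\bm{x})+g_i(\bm{x}) \;=\; f_i(\bm{x})-f_i(\bm{x}_{(k)})+F(\bm{x}_{(k)})/N \;\ge\; F(\bm{x}_{(k)})/N-L_i\eta \;\ge\; v^*/N-L_i\eta ,
\]
where the last inequality holds because $\bm{x}_{(k)}\in X$ gives $F(\bm{x}_{(k)})\ge v^*$. Taking the infimum over $\bm{x}\in X$ and summing over $i$ yields $\sum_i \inf_{\bm{x}}\{f_i+g_i\}\ge v^*-\eta\sum_iL_i\ge v^*-\epsilon$. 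This inequality does not require $\bm{x}_{(k)}\in\mathcal{P}_k$; it only needs $\bm{x}_{(k)}\in X$ and the distance bound $\eta$.

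\textbf{Step 3 (attainment of the minimum).} The statement writes $\min$, but $g_i$ is discontinuous, so the minimum may not be attained. The plan is to read $\min$ as $\inf$, or to invoke Proposition~\ref{prop:gi-milp}: since $f_i$ is continuous, replacing $g_i$ by its lower-semicontinuous hull leaves the value unchanged, and the resulting lsc function on the compact set $X$ attains its minimum. In either reading, the bound from Step 2 carries over.

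The main obstacle is establishing Lipschitz continuity, or at least uniform continuity, of $f_i$ under the nonlinear branch of Assumption~\ref{assump:CCR}. Mixed-integer recourse functions are generally only lower semicontinuous, and it is the complete continuous recourse assumption that rules out jumps. For this step I would rely on the appendix lemma, whose argument runs as follows. For each fixed integer $\bm{y}$ (finitely many, since $Y$ is compact), the continuous-recourse value function is Lipschitz in $\bm{x}$ by the Slater-type margin $\delta$ combined with Robinson's error bound. A minimum of finitely many Lipschitz functions is Lipschitz.
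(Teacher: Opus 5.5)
Your proof is correct and follows essentially the paper's argument: a Voronoi partition of $X$ into finitely many cells of small diameter, the constant value $g_i^*(\bm{x}_{(k)})$ on each cell, Lipschitz continuity from Lemma~\ref{lemma:LipschitzCont}, and a per-scenario error of order $\eta$ summed over $i\in[N]$. The differences are minor. You bound through the Lipschitz constants of the $f_i$ together with $F(\bm{x}_{(k)})\ge v^*$, rather than through the Lipschitz constant of $g_i^*$ together with $\min_{\bm{x}\in X}\{f_i+g_i^*\}=v^*/N$. Also, your finite $\eta$-net with centers in $X$ (from compactness) is more robust than the paper's uniform grid intersected with $X$, which need not place a center near every point of $X$ when the grid does not align with the integer coordinates of $X$.
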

Proposition~\ref{prop:simpleApprox} reinforces our earlier observation through Example~\ref{ex:piecewise-regularizer} that piecewise constant regularizers can achieve global optimum in~\eqref{problem:generalized_dual_decomposition}. The next theorem enacts this observation through Algorithm~\ref{alg:MIPGDD}.
\begin{theorem} \label{prop:MIPConvergence}
Under Assumptions~\ref{assump:CCR}--\ref{assumption:compact}, the sequence of lower bounds produced by Algorithm~\ref{alg:MIPGDD} with respect to the piecewise constant regularizers defined through~\eqref{eq:simpleGFunction} converges to \(v^*\).
\end{theorem}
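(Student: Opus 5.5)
The plan is to verify the two sufficient conditions of Theorem~\ref{prop:sufficientConverge} for the piecewise constant regularizers~\eqref{eq:simpleGFunction} built on the Voronoi partition of \(\mathcal{X}^t\). Once both conditions hold, that theorem gives convergence of the lower bounds to \(v^*\).

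\textbf{Condition 1.} Fix \(\bm{x} \in X\) and let \(k\) be the unique index with \(\bm{x} \in \mathcal{P}_k^t\). Then
\[
\sum_{i=1}^N g_i(\mathcal{X}^t,\bm{x}) \ = \ \sum_{i=1}^N g_i^*(\bm{x}_{(k)}) \ = \ \sum_{i=1}^N \Big(\sum_{j=1}^N f_j(\bm{x}_{(k)})/N - f_i(\bm{x}_{(k)})\Big) \ = \ 0 .
\]
So condition~1 holds with equality. When replacing \(g_i\) by \(\operatorname{lsc}(g_i)\) is convenient, Proposition~\ref{prop:gi-milp} shows the subproblem values are unchanged. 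This uses the continuity of \(f_i\), which follows from Lemma~\ref{lemma:LipschitzCont} under Assumption~\ref{assump:CCR}.

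\textbf{Condition 2.} By Assumption~\ref{assumption:compact}, every subsequence of \(\{\bm{x}^i_t\}\) has a convergent subsequence \(\bm{x}^i_{(r)} \to \bar{\bm{x}}\), attained at iterations \(t(r)\). Passing to a further subsequence, we may take \(t(r)\) strictly increasing.
\begin{itemize}
\item Since \(\bm{x}^i_{(r)}\) is added to \(\mathcal{X}^{t(r)+1} \subseteq \mathcal{X}^{t(r+1)}\), the point \(\bm{x}^i_{(r)}\) belongs to \(\mathcal{X}^{t(r+1)}\).
\item Let \(\bm{x}_{(k_r)}\) be the Voronoi center of the cell containing \(\bm{x}^i_{(r+1)}\) at iteration \(t(r+1)\). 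By the Voronoi property, \(\|\bm{x}^i_{(r+1)} - \bm{x}_{(k_r)}\| \le \|\bm{x}^i_{(r+1)} - \bm{x}^i_{(r)}\| \to 0\).
\item Hence \(g_i(\mathcal{X}^{t(r+1)}, \bm{x}^i_{(r+1)}) = g_i^*(\bm{x}_{(k_r)})\) with \(\bm{x}_{(k_r)} \to \bar{\bm{x}}\).
\end{itemize}
For boundary points whose cell membership is arbitrary, the same distance bound still holds for whichever cell is chosen, since closures are taken with weak inequalities.

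Next I use Lemma~\ref{lemma:LipschitzCont}: each \(f_j\) is Lipschitz on compact \(X\), hence \(g_i^* = \sum_j f_j/N - f_i\) is continuous. Therefore \(g_i^*(\bm{x}_{(k_r)}) - g_i^*(\bm{x}^i_{(r+1)}) \to 0\). Adding \(f_i(\bm{x}^i_{(r+1)})\) to both terms and taking \(\liminf\) along the shifted index gives
\[
\liminf_r \big(f_i(\bm{x}^i_{(r)}) + g_i^*(\bm{x}^i_{(r)})\big) \ = \ \liminf_r \big(f_i(\bm{x}^i_{(r)}) + g_i(\mathcal{X}^{t(r)}, \bm{x}^i_{(r)})\big).
\]
This is condition~2 with equality, after re-indexing the subsequence as \(r \mapsto r+1\).

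\textbf{Main obstacle.} The delicate step is the index bookkeeping in condition~2. The subsequence's earlier element must already be in the sample set when the later element is evaluated, which is why \(t(r)\) must be strictly increasing. The case where infinitely many \(t(r)\) coincide cannot arise, since each iteration yields one solution per scenario. A second point to check is that Lipschitz continuity of \(f_i\) holds under both branches of Assumption~\ref{assump:CCR}. For the nonlinear branch, I would invoke Robinson's extension of Hoffman's lemma via Lemma~\ref{lemma:LipschitzCont}, which I assume covers both cases as stated.
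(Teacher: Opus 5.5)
Your proposal is correct and follows essentially the same route as the paper. Condition~1 of Theorem~\ref{prop:sufficientConverge} holds with equality because \(\sum_{i} g_i^*(\bm{x}_{(k)}) = 0\) at every Voronoi center. For condition~2, an earlier element of the convergent subsequence already lies in the sample set, so the active Voronoi center converges to the same limit, and continuity of \(g_i^*\) (via Lemma~\ref{lemma:LipschitzCont}) finishes the argument, exactly as in the paper. Your explicit treatment of arbitrary boundary membership and of the nesting \(\mathcal{X}^{t(r)+1} \subseteq \mathcal{X}^{t(r+1)}\) spells out details the paper's proof leaves implicit; the paper compares against \(\bm{x}^i_{(R')}\) rather than the immediately preceding element.
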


\subsubsection{Valid, tight, and Lipschitz continuous regularizers.} \label{subsec:reverse-relu}
Second, we consider approximating the optimal regularizers $g^*_i(\bm{x})$ through cutting planes. Specifically, we generate a valid inequality, denoted by \(\mathcal{L}_i(\bm{x}_{(k)}, \bm{x})\), for the epigraph of \(g^*_i(\bm{x})\) at each \(\bm{x}_{(k)} \in \mathcal{X}^t\). In other words, \(g^*_i(\bm{x}) \geq \mathcal{L}_i(\bm{x}_{(k)}, \bm{x})\) for all \(\bm{x} \in X\). Then, in step~\ref{algo:gdd-update} of Algorithm~\ref{alg:MIPGDD}, we approximate \(g^*_i(\bm{x})\) by
\begin{align}
g_i(\mathcal{X}^t, \bm{x}) \ := \ \max_{k \in [K_t]} \mathcal{L}_i(\bm{x}_{(k)}, \bm{x}) \label{eq:lipschitz_cut_approx}
\end{align}
for all \(i \in [N]\). The appeal of the approximation \eqref{eq:lipschitz_cut_approx} is that it preserves a global lower estimate of $g_i^*(\bm{x})$ while improving the approximation locally at the optimal solutions to scenario subproblems. Examples of \(\mathcal{L}_i(\bm{x}_{(k)}, \bm{x})\) include the reverse norm cuts of~\cite{ahmed2022stochastic}, the augmented Lagrangian cuts of~\cite{ahmed2022stochastic}, and the ReLU Lagrangian cuts of~\cite{deng2024relu}.

We next show that the convergence of Algorithm~\ref{alg:MIPGDD} to global optimum relies on three key properties of the cuts \(\mathcal{L}_i(\bm{x}_{(k)}, \bm{x})\): they must be \emph{valid} for \(g^*_i(\bm{x})\), \emph{tight} at \(\bm{x}_{(k)}\), and \emph{uniformly Lipschitz continuous} in \(\bm{x}\) so that the ensuing local improvement at \(\bm{x}_{(k)}\) can be propagated to its neighborhood.

\begin{definition}[Valid Cut]
A cut $\mathcal{L}_i(\bm{x}_{(k)}, \bm{x})$ is valid if $g_i^*(\bm{x}) \geq \mathcal{L}_i(\bm{x}_{(k)}, \bm{x})$ for all $\bm{x} \in X$.
\end{definition}

\begin{definition}[Tight Cut]
A cut $\mathcal{L}_i(\bm{x}_{(k)}, \bm{x})$ is tight at \(\bm{x}_{(k)}\) if $g_i^*(\bm{x}_{(k)}) = \mathcal{L}_i(\bm{x}_{(k)}, \bm{x}_{(k)})$.
\end{definition}

\begin{definition}[Uniformly Lipschitz Cut Family]
A family of cuts $\{\mathcal{L}_i(\hat{\bm{x}},\cdot) : \hat{\bm{x}} \in X\}$ is uniformly Lipschitz continuous if there exists a constant $L_i \ge 0$ such that
\[
    \left|\mathcal{L}_i(\hat{\bm{x}},\bm{x}) - \mathcal{L}_i(\hat{\bm{x}},\bm{y})\right|
    \ \le \ L_i \|\bm{x}-\bm{y}\|
    \qquad \forall \bm{x},\bm{y},\hat{\bm{x}} \in X.
\]
\end{definition}
In particular, the reverse norm cuts and the augmented Lagrangian cuts are valid, tight, and uniformly Lipschitz continuous. The ReLU cuts are valid and tight, and are uniformly Lipschitz continuous if all \(f_i(\cdot)\) are Lipschitz continuous. The next theorem shows that these properties are sufficient for the convergence of Algorithm~\ref{alg:MIPGDD} to global optimum.
\begin{theorem} \label{prop:LipschitzContCut}
Under Assumptions \ref{assump:CCR}-\ref{assumption:compact}, if the family of cuts $\mathcal{L}_i(\bm{x}_{(k)}, \bm{x})$ is valid, tight, and uniformly Lipschitz continuous, then the sequence of lower bounds produced by Algorithm~\ref{alg:MIPGDD} with respect to the cuts~\eqref{eq:lipschitz_cut_approx} converges to \(v^*\).
\end{theorem}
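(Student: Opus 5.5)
We prove Theorem~\ref{prop:LipschitzContCut} by verifying the two sufficient conditions of Theorem~\ref{prop:sufficientConverge} for the regularizers~\eqref{eq:lipschitz_cut_approx}; convergence of the lower bounds to \(v^*\) then follows.

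\emph{Condition 1.} Validity gives \(\mathcal{L}_i(\bm{x}_{(k)},\bm{x}) \le g_i^*(\bm{x})\) for every \(k\). Taking the maximum over \(k\), we get \(g_i(\mathcal{X}^t,\bm{x}) \le g_i^*(\bm{x})\) for all \(\bm{x}\in X\). Summing over \(i\) and using \(\sum_i g_i^*(\bm{x}) = \sum_i f_i(\bm{x}) - \sum_i f_i(\bm{x}) = 0\) from Proposition~\ref{prop:optimal-regularizer}, we obtain \(\sum_i g_i(\mathcal{X}^t,\bm{x}) \le 0\). Before any cut exists, \(g_i\equiv 0\) and the sum is trivially zero.

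\emph{Condition 2.} Fix \(i\) and any subsequence of \(\{\bm{x}^i_t\}\). Compactness of \(X\) (Assumption~\ref{assumption:compact}) gives a further convergent subsequence \(\bm{x}^i_{(r)} \to \bar{\bm{x}}\), where \(\bm{x}^i_{(r)}\) is produced at iteration \(t(r)\), strictly increasing in \(r\). The key observation is that \(\bm{x}^i_{(r-1)}\) was added to \(\mathcal{X}^{t(r-1)+1}\subseteq \mathcal{X}^{t(r)}\). Hence
\[
g_i(\mathcal{X}^{t(r)},\bm{x}^i_{(r)}) \ \ge \ \mathcal{L}_i(\bm{x}^i_{(r-1)},\bm{x}^i_{(r)}) \ \ge \ \mathcal{L}_i(\bm{x}^i_{(r-1)},\bm{x}^i_{(r-1)}) - L_i\|\bm{x}^i_{(r)}-\bm{x}^i_{(r-1)}\| \ = \ g_i^*(\bm{x}^i_{(r-1)}) - L_i\|\bm{x}^i_{(r)}-\bm{x}^i_{(r-1)}\|,
\]
using uniform Lipschitz continuity and then tightness. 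We need \(\mathcal{X}^t\) to be cumulative, which it is in Algorithm~\ref{alg:MIPGDD}. The cut entering this bound must be the one generated for subproblem \(i\); this holds because each \(\mathcal{L}_i\) is indexed by the scenario, and all scenario solutions are added for every \(i\).

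Next, under Assumptions~\ref{assump:CCR}--\ref{assumption:compact}, each \(f_j\) is Lipschitz continuous on \(X\) (Lemma~\ref{lemma:LipschitzCont}), so \(g_i^*\) is Lipschitz as well, with some constant \(L^*\). Therefore
\[
g_i^*(\bm{x}^i_{(r-1)}) \ \ge \ g_i^*(\bm{x}^i_{(r)}) - L^*\|\bm{x}^i_{(r)}-\bm{x}^i_{(r-1)}\|.
\]
Combining the two displays yields \(f_i(\bm{x}^i_{(r)}) + g_i(\mathcal{X}^{t(r)},\bm{x}^i_{(r)}) \ge f_i(\bm{x}^i_{(r)}) + g_i^*(\bm{x}^i_{(r)}) - (L_i+L^*)\|\bm{x}^i_{(r)}-\bm{x}^i_{(r-1)}\|\). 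Since the subsequence converges, consecutive differences vanish. Taking \(\liminf\) on both sides gives exactly Condition 2 of Theorem~\ref{prop:sufficientConverge}.

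\emph{Main obstacle.} The delicate step is the Lipschitz bridge between the cut point \(\bm{x}^i_{(r-1)}\) and the current point \(\bm{x}^i_{(r)}\). We need both the uniform Lipschitz constant of the cut family and the Lipschitz continuity of \(g_i^*\), and the latter is where Assumption~\ref{assump:CCR} enters through Lemma~\ref{lemma:LipschitzCont}. In the nonlinear case of that assumption, we would appeal to the Robinson/Hoffman-type extension to obtain it. We must also check that the liminf terms are finite: boundedness of Lipschitz functions on the compact set \(X\) handles this.
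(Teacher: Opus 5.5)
Your proposal is correct and follows essentially the same route as the paper. Both verify the two conditions of Theorem~\ref{prop:sufficientConverge}: validity gives the first, and the second comes from the tight cut anchored at the previous subsequence point $\bm{x}^i_{(r-1)} \in \mathcal{X}^{t(r)}$, combined with uniform Lipschitz continuity of the cuts and continuity of $g_i^*$ from Lemma~\ref{lemma:LipschitzCont}. The only cosmetic difference is that you apply the Lipschitz bound to the single cut $\mathcal{L}_i(\bm{x}^i_{(r-1)},\cdot)$ and return to $g_i^*(\bm{x}^i_{(r)})$ via the Lipschitz constant of $g_i^*$, whereas the paper applies it to the maximum $g_i(\mathcal{X}^{t(r)},\cdot)$, uses validity plus tightness to get $g_i(\mathcal{X}^{t(r)},\bm{x}^i_{(r-1)}) = g_i^*(\bm{x}^i_{(r-1)})$, and passes to the limit point.
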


\subsubsection{Special case: binary tender.} \label{subsec:binary-tender}

In a special case of~\eqref{problem:primal}, all entries of \(\bm{x}\) take binary values. This simplifies~\eqref{problem:generalized_dual_decomposition} and Algorithm~\ref{alg:MIPGDD}, because each regularizer \(g_i(\cdot): \{0,1\}^{n_1} \rightarrow \mathbb{R}\) can now be encoded as a finite-dimensional vector \(\big[g^j_i: j \in [2^{n_1}]\big]\), where \(g^j_i=g_i\big(x^{(j)}\big)\) and \(\big\{x^{(j)}: j \in [2^{n_1}]\big\}\) denotes a permutation of the set \(\{0, 1\}^{n_1}\). In other words, the regularizer can be recast as \(g_i(\bm{x}) = \sum_{j=1}^{2^{n_1}} g^j_i \mathbbm{1}(\bm{x} = \bm{x}^{(j)})\). Accordingly, the~\eqref{problem:generalized_dual_decomposition} formulation now simplifies to
\begin{align}
v_{\text{GDD}} \ := \ \max_{\bm{g}} \ & \ \sum_{i=1}^N \ \min_{\bm{x}^i \in X} \bigg\{f_i(\bm{x}^i) + \sum_{j=1}^{2^{n_1}} g^j_i \mathbbm{1}(\bm{x}^i = \bm{x}^{(j)})\bigg\} \nonumber \\
\text{s.t.} \ & \ \sum_{i=1}^N g^j_i = 0 \quad \forall j \in [2^{n_1}]. \tag{GDD-B} \label{GDD-binary}
\end{align}
We notice that~\eqref{GDD-binary} recovers the V-Lagrangian dual formulation proposed by~\cite{cifuentes2024lagrangian} after adding all possible monomial redundant constraints (e.g., \(x^i_1x^i_2 = x_1x_2\), \(x^i_1x^i_2x^i_3 = x_1x_2x_3\), etc.) to the primal~\eqref{problem:primal} formulation (see their Section 2.4). In addition, Algorithm~\ref{alg:MIPGDD} simplifies to the subgradient algorithm of~\cite{cifuentes2024lagrangian} in this special case, which updates vectors \(\big[g^j_i: j \in [2^{n_1}]\big]\) instead of the regularizing functions \(g_i(\cdot)\). As a natural result, the convergence of Algorithm~\ref{alg:MIPGDD} to global optimum follows from that of the subgradient algorithm.

A key difference between this work and~\cite{cifuentes2024lagrangian} is that we directly generalize the dual formulation~\eqref{problem:generalized_dual_decomposition} and its regularizers, as opposed to adding redundant constraints to~\eqref{problem:primal} and then taking its Lagrangian dual. This allows us to address the more general, \emph{mixed-integer} tender variables \(\bm{x}\) without the need of finding the appropriate primal redundant constraints.

\subsection{Algorithmic Improvement Strategies} \label{subsec:improvement}
The~\ref{problem:generalized_dual_decomposition} Algorithm~\ref{alg:MIPGDD} allows us to solve all scenario subproblems in parallel. Furthermore, in the case of adopting piecewise constant regularizers, Proposition~\ref{prop:gi-milp} provides a MILP representation for each scenario subproblem. Nonetheless, the MILP formulation therein involves big-M coefficients, which often results in a weak continuous relaxation and slow computation. This section focuses on the piecewise constant regularizers and proposes several improvement strategies to circumvent this challenge and speed up solving the scenario subproblems. We demonstrate the effectiveness of these strategies numerically in Section~\ref{sec:numerical}.
\subsubsection{Polyhedral study.} The first strategy is to strengthen the MILP formulation in Proposition~\ref{prop:gi-milp}. To this end, we characterize the convex envelope of a general piecewise constant regularizer in the form of~\eqref{eq:simpleGFunction}.
\begin{proposition}\label{prop:polyStudy}
Consider a piecewise constant regularizer \(g(\bm{x}) := \sum_{k=1}^K g_k \mathbbm{1}(\bm{x} \in \mathcal{P}_k)\), where \(\{g_k: k \in [K]\}\) are fixed constants and \(\{\mathcal{P}_k: k \in [K]\}\) form a polyhedral partition of \(X\) with \(\operatorname{cl}(\mathcal{P}_k) = \{\bm{x} \in \mathbb{R}^{n_1}: \bm{A}_k \bm{x} \leq \bm{b}_k\}\), \(X = \bigcup_{k=1}^K \mathcal{P}_k\), and \(\mathcal{P}_j \cap \mathcal{P}_k = \varnothing\) whenever \(j \neq k\). Then, the convex envelope of \(g\) is given by
\begin{equation*}
\overline{\operatorname{co}}(g) \ = \ \Big\{(\bm{x}, \theta) \in X \times \mathbb{R}: \ \theta \geq \alpha + \bm{\beta}^{\top} \bm{x}, \ \forall (\alpha, \bm{\beta}) \in \operatorname{ext}(E)\Big\},
\end{equation*}
where the polyhedron \(E\) is defined through $E := \big\{(\alpha, \bm{\beta})\; \big| \; \exists \bm{\eta}_k \geq \bm{0}:\; \alpha + \bm{b}_k^\top \bm{\eta}_k \leq g_k, \; \bm{A}_k^\top \bm{\eta}_k = \bm{\beta},\; \forall k \in [K]\big\}$.
\end{proposition}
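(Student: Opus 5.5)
The plan is to describe the convex envelope as the pointwise supremum of all affine minorants of \(g\) on \(X\), and to identify the set of such minorants with the polyhedron \(E\). Concretely, \(\overline{\operatorname{co}}(g)(\bm{x}) = \sup\{\alpha + \bm{\beta}^\top\bm{x}: \ \alpha + \bm{\beta}^\top \bm{y} \le g(\bm{y}) \ \forall \bm{y} \in X\}\). Here we read the convex envelope as the closed convex envelope, matching the \(\overline{\operatorname{co}}\) notation. Because \(g\) is bounded below by \(\min_k g_k\), this set of minorants is nonempty.

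\textbf{Step 1: reduce validity to each cell.} Since \(X = \bigcup_k \mathcal{P}_k\) and \(g \equiv g_k\) on \(\mathcal{P}_k\), the pair \((\alpha,\bm{\beta})\) is a minorant iff \(\alpha + \bm{\beta}^\top\bm{y} \le g_k\) for all \(\bm{y}\in\mathcal{P}_k\) and all \(k\). An affine function is continuous, so this is equivalent to the same inequality holding on \(\operatorname{cl}(\mathcal{P}_k)\). (If some \(\mathcal{P}_k\) is empty, its closure is empty and the cell is dropped.) For each \(k\), this means
\[
\max\{\bm{\beta}^\top \bm{y}: \bm{A}_k\bm{y}\le \bm{b}_k\} \le g_k - \alpha .
\]

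\textbf{Step 2: dualize each cell by LP duality.} For a nonempty polyhedron, the maximum above is finite iff the dual \(\min\{\bm{b}_k^\top\bm{\eta}_k: \bm{A}_k^\top\bm{\eta}_k=\bm{\beta}, \bm{\eta}_k\ge \bm{0}\}\) is feasible, and then the two optimal values coincide. So the minorant condition for cell \(k\) is equivalent to the existence of \(\bm{\eta}_k\ge\bm{0}\) with \(\bm{A}_k^\top\bm{\eta}_k=\bm{\beta}\) and \(\alpha+\bm{b}_k^\top\bm{\eta}_k\le g_k\). Both directions are needed: weak duality gives sufficiency, and strong duality, with attainment, gives necessity. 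Combining over \(k\), the set of affine minorants is exactly \(E\), and hence \(\overline{\operatorname{co}}(g)(\bm{x}) = \sup_{(\alpha,\bm{\beta})\in E}\{\alpha+\bm{\beta}^\top\bm{x}\}\).

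\textbf{Step 3: reduce from \(E\) to \(\operatorname{ext}(E)\).} This is the main obstacle. \(E\) is a polyhedron, being the projection of a polyhedron, but it is unbounded: decreasing \(\alpha\) keeps a point feasible, so \((-1,\bm{0})\) lies in its recession cone. Also, its lineality space may be nontrivial, in which case \(\operatorname{ext}(E)\) is empty. I would handle this as follows.

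1. Suppose \((\bm{u},\bm{v})\) lies in the lineality space, so both \((\bm{u},\bm{v})\) and \(-(\bm{u},\bm{v})\) are recession directions. Applying the minorant condition to both shows \(u + \bm{v}^\top\bm{y} = 0\) for every \(\bm{y}\) in each cell, hence on all of \(X\). Such directions therefore do not change the value of \(\alpha+\bm{\beta}^\top\bm{x}\) for \(\bm{x}\in X\).
2. Quotienting out the lineality space, or assuming \(X\) is full-dimensional so that \(E\) is pointed, Minkowski--Weyl writes \(E = \operatorname{conv}(\operatorname{ext}(E)) + \operatorname{rec}(E)\).
3. Every recession direction \((u,\bm{v})\) satisfies \(u+\bm{v}^\top\bm{y}\le 0\) on each cell, by the same duality argument applied with \(g_k\) replaced by \(0\). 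So adding a recession direction never increases the objective at \(\bm{x}\in X\).
4. The supremum of the linear function \((\alpha,\bm{\beta})\mapsto\alpha+\bm{\beta}^\top\bm{x}\) is therefore attained at an extreme point.

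With the supremum attained at extreme points, the epigraph of \(\overline{\operatorname{co}}(g)\) restricted to \(X\) is exactly \(\{(\bm{x},\theta): \theta\ge\alpha+\bm{\beta}^\top\bm{x} \ \forall (\alpha,\bm{\beta})\in\operatorname{ext}(E)\}\), which is the claimed representation. Finally, I would remark that \(\operatorname{ext}(E)\) is finite, so this is a polyhedral description.
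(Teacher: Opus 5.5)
Your proof is correct, but it reaches $E$ from the dual side, whereas the paper works on the primal side. The paper writes $\operatorname{conv}(\operatorname{epi} g)$ via Balas' disjunctive (perspective) formulation in lifted variables: $\sum_k\lambda_k=1$, $\sum_k\bm{y}_k=\bm{x}$, and $\bm{A}_k\bm{y}_k\le\lambda_k\bm{b}_k$. It then minimizes $\theta$ and takes a single LP dual of the whole lifted program, whose feasible region projected to $(\alpha,\bm{\beta})$ is $E$. You instead start from $\overline{\operatorname{co}}(g)$ as the supremum of affine minorants and dualize cell by cell. This identifies $E$ exactly as the set of all valid affine cuts for $g$ on $X$.

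The paper's route is shorter, and it also yields an explicit extended formulation of the hull that can be used without enumerating $\operatorname{ext}(E)$. Yours gives $E$ a transparent meaning. More importantly, it handles the final step that the paper dispatches with ``the LP attains its optimum at an extreme point of $E$.'' As you note, the lineality space of $E$ is $\{(u,\bm{v}): u+\bm{v}^\top\bm{y}=0\ \forall\bm{y}\in X\}$, so $E$ is pointed only when $\operatorname{aff}(X)=\mathbb{R}^{n_1}$. Otherwise $\operatorname{ext}(E)=\varnothing$ and the displayed set degenerates to $X\times\mathbb{R}$; one must then use extreme points of $E$ modulo its lineality space, as in your Step~3. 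So your argument proves the statement under that implicit full-dimensionality assumption, and gives the correct version in general.

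One small caveat: dropping an empty cell does not literally reproduce $E$. For an empty cell, the $k$th constraint of $E$ still forces $\bm{\beta}$ into the cone generated by the rows of $\bm{A}_k$. It is cleaner simply to assume all cells are nonempty, as the paper tacitly does.
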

Proposition~\ref{prop:polyStudy} indicates that all facets of the target convex envelope pertain to extreme points of the polyhedron $E$. As a result, the convex envelope can be separated by optimizing a linear function over \(E\). Specifically, provided a point \((\hat{\bm{x}}, \hat{\theta})\), we can either certify that \((\hat{\bm{x}}, \hat{\theta}) \in \overline{\operatorname{co}}(g)\) if
\[
\hat{\theta} \ \geq \ \theta^* \ := \ \max_{(\alpha, \bm{\beta}) \in E} \big\{\alpha + \hat{\bm{x}}^{\top} \bm{\beta}\big\}
\]
or otherwise (i.e., if \(\hat{\theta} < \theta^*\)) separate \((\hat{\bm{x}}, \hat{\theta})\) from \(\overline{\operatorname{co}}(g)\) using the inequality \(\theta \geq \alpha^* + (\bm{\beta}^*)^{\top} \bm{x}\), where \((\alpha^*, \bm{\beta}^*)\) denotes an optimal solution to the above linear program.

\paragraph{Special case: binary tender.} When \(\bm{x}\) is binary-valued, it is even possible to derive a closed-form expression of a convex envelope closely related to \(\overline{\operatorname{co}}(g)\). When a standard subgradient algorithm is applied to solve~\eqref{GDD-binary}, the regularizers \(g_i(\bm{x}) \equiv \sum_{j=1}^{2^{n_1}} g^j_i \mathbbm{1}(\bm{x} = \bm{x}^{(j)})\) are sparse, that is, \(g^j_i \neq 0\) only for \(j\) belonging with a small subset \(\mathcal{J} \subseteq [2^{n_1}]\) and \(g^j_i = 0\) whenever \(j \notin \mathcal{J}\). This is because all \(g^j_i\) are initialized to be zero and each iteration of the subgradient algorithm updates at most \(N\) entries of \(\{g^j_i: j \in [2^{n_1}]\}\), pertaining to optimal solutions \(\{\bm{x}^{(j)}: j \in \mathcal{J}\}\) to the subproblems solved so far. As a result, the epigraph of each \(g_i(\bm{x})\) can be recast as
\[
\operatorname{epi}(g_i) \ = \ \bigcup_{j \in \mathcal{J}} \big\{\big(\theta, \bm{x}^{(j)}\big): \theta \geq g^j_i \big\} \ \cup \ \big\{(\theta, \bm{x}): \ \theta \geq 0, x \in A^c\big\},
\]
where set \(A : = \big\{\bm{x}^{(j)}, \forall j \in \mathcal{J}\big\} \subseteq \{0, 1\}^{n_1}\) collects these optimal solutions. Because each \(\big\{\big(\theta, \bm{x}^{(j)}\big): \theta \geq g^j_i \big\}\) is convex and trivial to optimize over, we focus on convexifying the set \(\big\{(\theta, \bm{x}): \ \theta \geq 0, x \in A^c\big\}\) in the next proposition.
\begin{proposition} \label{prop:convexHullProj}
For a set \(A \subseteq \{0, 1\}^{n_1}\), define its indicator function 
$I_A:\{0, 1\}^{n_1} \rightarrow \{0, 1\}$ through
$$I_A(\bm{x}) = \begin{cases}
        1 & \text{if } \bm{x} \in A\\
        0 & \text{otherwise}.
\end{cases}$$
Then, it holds that
\[
\operatorname{conv}\big\{(\theta, \bm{x}): \ \theta \geq 0, \ x \in A^c\big\} \ = \ \big\{(\theta, \bm{x}): \ \theta \geq 0, \ (0, \bm{x}) \in \overline{\operatorname{co}}(I_A) \big\}.
\]
\end{proposition}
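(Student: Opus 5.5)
The plan is to show that both sides equal \(\mathbb{R}_+ \times \operatorname{conv}(A^c)\), where \(A^c := \{0,1\}^{n_1}\setminus A\). As in Proposition~\ref{prop:polyStudy}, I read \(\overline{\operatorname{co}}(I_A)\) as the epigraph of the convex envelope of \(I_A\). That envelope is defined on \(\operatorname{conv}(\{0,1\}^{n_1}) = [0,1]^{n_1}\) by
\[
\overline{\operatorname{co}}(I_A)(\bm{x}) \ = \ \min\Big\{\sum_{\bm{v} \in \{0,1\}^{n_1}} \lambda_{\bm{v}} I_A(\bm{v}) : \ \sum_{\bm{v}} \lambda_{\bm{v}} \bm{v} = \bm{x}, \ \sum_{\bm{v}} \lambda_{\bm{v}} = 1, \ \bm{\lambda} \geq \bm{0}\Big\}.
\]
This is the standard formula for the convex envelope of a function on a finite set. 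The minimum is attained because the feasible set is a nonempty compact polytope for \(\bm{x} \in [0,1]^{n_1}\), and outside \([0,1]^{n_1}\) the envelope is \(+\infty\).

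\textbf{Step 1 (left-hand side).} The set \(\{(\theta,\bm{x}) : \theta \geq 0, \ \bm{x} \in A^c\}\) is the product \(\mathbb{R}_+ \times A^c\). The convex hull of a product is the product of the convex hulls. For the inclusion ``\(\supseteq\)'', write \((\theta,\bm{x}) = \sum_j \mu_j(\theta, \bm{v}_j)\) with \(\bm{x} = \sum_j \mu_j \bm{v}_j\) and each \(\bm{v}_j \in A^c\). For ``\(\subseteq\)'', the projections of a convex combination stay in \(\mathbb{R}_+\) and in \(\operatorname{conv}(A^c)\) respectively. Hence the left-hand side equals \(\mathbb{R}_+ \times \operatorname{conv}(A^c)\).

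\textbf{Step 2 (right-hand side).} The condition \((0,\bm{x}) \in \overline{\operatorname{co}}(I_A)\) means \(\overline{\operatorname{co}}(I_A)(\bm{x}) \leq 0\). Since \(I_A \geq 0\), this is equivalent to the envelope being exactly \(0\) at \(\bm{x}\). Take a minimizing \(\bm{\lambda}\) in the formula above. The objective value \(0\) forces \(\lambda_{\bm{v}} = 0\) for every \(\bm{v} \in A\), so \(\bm{x}\) is a convex combination of points of \(A^c\), i.e., \(\bm{x} \in \operatorname{conv}(A^c)\). Conversely, if \(\bm{x} \in \operatorname{conv}(A^c)\), its representing weights are feasible and give objective value \(0\). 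Thus the right-hand side equals \(\mathbb{R}_+ \times \operatorname{conv}(A^c)\), which matches Step 1.

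\textbf{Edge case and main obstacle.} If \(A^c = \varnothing\), the left-hand side is empty. On the right, \(I_A \equiv 1\) on the cube, so the envelope is identically \(1\) on \([0,1]^{n_1}\) and no point \((0,\bm{x})\) belongs to its epigraph; both sides are empty, consistent with the argument above. The only genuinely delicate point is justifying the finite-set representation of the convex envelope. This includes its closedness, which holds because it is the lower envelope of finitely many points and so is polyhedral. I would cite this as standard, or derive it from Proposition~\ref{prop:polyStudy} by viewing \(I_A\) on the cube with singleton pieces.
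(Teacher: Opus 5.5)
Your proposal is correct and follows essentially the same route as the paper. Both arguments identify each side with $\mathbb{R}_+ \times \operatorname{conv}(A^c)$, using that a zero value of the envelope forces every convex weight on points of $A$ to vanish. The paper phrases this by writing $\operatorname{conv}(\operatorname{epi} I_A)$ with weights $\lambda_j$ and slicing at $\theta = 0$; you use the minimum-over-convex-combinations formula for $\overline{\operatorname{co}}(I_A)(\bm{x})$, which is the same computation. Your explicit treatment of closedness and of the case $A^c = \varnothing$ is a small addition that the paper leaves implicit.
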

It follows from Proposition~\ref{prop:convexHullProj} that convexifying \(\operatorname{epi}(g_i)\) boils down to computing the convex envelope of \(I_A\). While this task is challenging for a general \(A\), we address a simpler, but basic case.
\begin{definition}[Chain]
A set \(A \equiv \{\bm{x}^{(j)}: j \in \mathcal{J}\} \subseteq \{0, 1\}^{n_1}\) is called a \emph{chain} if there exists a permutation $\pi$ of $\mathcal{J}$, such that $I(\bm{x}^{(\pi_j)}) \subset I(\bm{x}^{(\pi_{j + 1})})$ for all \(j \in [|\mathcal{J}|]\), and the chain is \emph{continuous} if $|I(\bm{x}^{(\pi_{j + 1})})| = |I(\bm{x}^{(\pi_j)})| + 1$. \hfill \(\Box\)
\end{definition}
Chains are fundamental, because any subset \(A \subseteq \{0, 1\}^{n_1}\) can be written as a finite union of continuous chains, i.e., \(A = \cup_i A_i\) and each \(A_i\) is a continuous chain. Accordingly, we have \(I_A(\bm{x}) = \max_i\{I_{A_i}(\bm{x})\}\). The next proposition characterizes the convex envelope of chain indicator functions.
\begin{proposition}[Convex Envelope for Chains]\label{prop:convexEnvForChain}
Consider a chain \(A \equiv \{\bm{x}^{(j)}: j \in \mathcal{J}\}\) and, without loss of generality, assume that $I(\bm{x}^{(j)}) \subset I(\bm{x}^{(j + 1)})$ and each \(\bm{x}^{(j)}\) is non-increasing, i.e., \(x^{(j)}_i \geq x^{(j)}_{i+1}\) (all \(1\)-entries are in the front). Then, it holds that
\begin{equation*}
\overline{\operatorname{co}}(I_A) = 
\left\{ (\bm{x}, \theta) \in [0, 1]^{n_1} \times \mathbb{R}_+ \;\middle|\; 
\begin{aligned}
        \theta &\geq \sum_{i=1}^{|I(\bar{\bm{x}})|}x_{i} - \sum_{i=|I(\bar{\bm{x}})| + 1}^{n_1} x_{i} + 1 - |I(\bar{\bm{x}})| \\
        &\qquad \forall \bar{\bm{x}}\in A: \ \bar{\bm{x}} + \bm{e}_{|I(\bar{\bm{x}})| + 1} \notin A \\[0.5em]
        \theta &\geq \sum_{i=1}^{|I(\bar{\bm{x}})|}x_{i} - \sum_{i=|I(\bar{\bm{x}})| + 2}^{n_1} x_{i} + 1 - |I(\bar{\bm{x}})| \\
        &\qquad \forall \bar{\bm{x}} \in A: \ \bar{\bm{x}} + \bm{e}_{|I(\bar{\bm{x}})| + 1} \in A
\end{aligned}
\right\}.
\end{equation*}
\end{proposition}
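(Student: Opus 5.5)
The plan is to prove two inclusions between $\overline{\operatorname{co}}(I_A)$ (the convex envelope of $I_A$ viewed on $[0,1]^{n_1}$) and the polyhedron $Q$ on the right-hand side. Throughout, write $\bar{\bm{x}}^{k}$ for the chain element whose first $k$ entries equal one. The first inclusion is validity: every point of $\overline{\operatorname{co}}(I_A)$ lies in $Q$. Since $Q$ is convex, it suffices to check that $(\bm{x}, I_A(\bm{x})) \in Q$ for every $\bm{x}\in\{0,1\}^{n_1}$.

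To see this, rewrite the first-type inequality for $\bar{\bm{x}}^{k}$ as
\[
\theta \ \geq \ 1 - \sum_{i\le k}(1-x_i) - \sum_{i>k} x_i,
\]
and the second-type inequality as the same expression with the $i=k+1$ term dropped. On binary points, the first right-hand side equals $1$ exactly at $\bar{\bm{x}}^{k}$ and is $\le 0$ elsewhere. The second equals $1$ exactly on the edge $\{\bar{\bm{x}}^{k}, \bar{\bm{x}}^{k}+\bm{e}_{k+1}\}$, which lies in $A$ by the case condition, and is $\le 0$ elsewhere. Together with $\theta\ge 0$, every listed inequality is satisfied by $(\bm{x}, I_A(\bm{x}))$, so $\overline{\operatorname{co}}(I_A)\subseteq Q$. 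Conceptually, the first-type inequality is the convex envelope of the indicator of the single vertex $\bar{\bm{x}}^{k}$, and the second-type inequality is the envelope of the indicator of a cube edge.

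The second inclusion, $Q\subseteq\overline{\operatorname{co}}(I_A)$, is the real content. For each $\bm{x}\in[0,1]^{n_1}$, I would exhibit a convex combination $\bm{x}=\sum_s\mu_s\bm{z}^s$ of binary points $\bm{z}^s$ with $\sum_s\mu_s I_A(\bm{z}^s)\le \max\{0,\ \text{largest right-hand side at } \bm{x}\}$. I would first group $A$ into maximal runs of consecutive sizes $k,k+1,\dots,m$; the last element of each run gives a first-type inequality and all other elements give second-type inequalities. Writing $d_k(\bm{x}) := \sum_{i\le k}(1-x_i)+\sum_{i>k}x_i$, the right-hand sides have the form $1-d_k(\bm{x})$ or $1-d_k(\bm{x})+x_{k+1}$. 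A telescoping estimate shows that for chain elements of sizes $k<k'$, the two corresponding distances sum to at least $k'-k$ (minus one in the edge case). Hence only inequalities from a single run, and within it only two adjacent ones, can be positive at the same $\bm{x}$. This reduces the problem to showing that, on the region where some right-hand side is positive, the envelope of the indicator of a monotone lattice path $\bar{\bm{x}}^{k}\to\bar{\bm{x}}^{k+1}\to\dots\to\bar{\bm{x}}^{m}$ equals the maximum of its edge envelopes, and the envelope of an isolated chain point is its vertex envelope. Where every right-hand side is $\le 0$, I must instead show that $\bm{x}$ is a convex combination of points of $A^c$ only.

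For both of these I would use a staircase (Lovász-extension type) decomposition. Sort the coordinates of $\bm{x}$, flipping the roles of $x_i$ and $1-x_i$ for $i\le k$ so that everything is measured relative to the path. Then write $\bm{x}$ as a convex combination of the nested binary points obtained by thresholding at the sorted values. The weight such a decomposition places on path vertices can be computed explicitly. I would show that this weight equals $\max\{0,1-d_k,1-d_k+x_{k+1},\dots\}$, or more precisely that a suitable choice of threshold ordering achieves this value, by matching which threshold levels produce chain elements. In the region where all right-hand sides are nonpositive, the same construction should be perturbed so that no generated vertex lies in $A$.

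I expect this last step to be the main obstacle. The naive sorted threshold order may pass through chain vertices unnecessarily, and I would need an exchange argument that reorders ties or swaps adjacent thresholds to route the decomposition around $A$. As a fallback, I would instead prove integrality of $Q$ via LP duality: for arbitrary $(\bm{c},1)$, show that $\min\{\theta+\bm{c}^\top\bm{x}:(\bm{x},\theta)\in Q\}$ is attained at a binary point. The argument would be a case analysis on which single inequality, or adjacent pair within one run, is active.
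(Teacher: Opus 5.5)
\textbf{Verdict: genuine gap.} Your first inclusion (the listed inequalities are valid) is correct, but the reverse inclusion is only outlined, and the reduction you would feed into it is flawed.

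\textbf{The first inclusion is fine.} You rewrite the right-hand sides as $1-\sum_{i\le k}(1-x_i)-\sum_{i>k}x_i$, dropping the $i=k+1$ term for the second type. Evaluating at binary points then correctly shows that every listed inequality, together with $\theta\ge 0$, is valid for $\overline{\operatorname{co}}(I_A)$.

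\textbf{The reverse inclusion is the entire content of the proposition, and it is not proved.}
\begin{enumerate}
\item The staircase decomposition is never pinned down: no threshold order or tie-breaking rule is given.
\item The claim that its weight on $A$ equals $\max\{0,\text{right-hand sides}\}$ is asserted, not computed.
\item The nonpositive region amounts to proving $\operatorname{conv}(A^c)=\{\bm{x}\in[0,1]^{n_1}:\text{all right-hand sides}\le 0\}$. That is itself a nontrivial polyhedral description, and you leave it to an unspecified exchange argument.
\item The integrality fallback is only named.
\end{enumerate}

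\textbf{The reduction is also wrong as stated.}
\begin{enumerate}
\item Define $d'_k(\bm{x}):=\sum_{i\le k}(1-x_i)+\sum_{i\ge k+2}x_i$. For two edges of a run that are two steps apart, the triangle inequality only gives $d'_k+d'_{k+2}\ge 1$. So more than two inequalities can be positive at once. For example, take $n_1=3$, $A=\{\bar{\bm{x}}^0,\bar{\bm{x}}^1,\bar{\bm{x}}^2,\bar{\bm{x}}^3\}$ and $\bm{x}=(1,\tfrac12,0)$: the three second-type right-hand sides are $\tfrac12$, $1$ and $\tfrac12$.
\item More importantly, knowing that only one run is active at $\bm{x}$ does not let you replace $I_A$ by that run's indicator. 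Since $I_A\ge I_{\mathrm{run}}$, you only get $\overline{\operatorname{co}}(I_A)\ge\overline{\operatorname{co}}(I_{\mathrm{run}})$ for free. The upper bound you need requires a decomposition of $\bm{x}$ that puts no weight on any other element of $A$, and nothing in your plan controls that.
\end{enumerate}

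\textbf{The missing idea is to dualize rather than decompose; this is the paper's route.} By LP duality,
$\overline{\operatorname{co}}(I_A)(\bm{x})=\max\{\bm{\alpha}^\top\bm{x}+\beta:(\bm{\alpha},\beta)\in E\}$, where $E:=\{(\bm{\alpha},\beta):\bm{\alpha}^\top\bar{\bm{x}}+\beta\le I_A(\bar{\bm{x}})\ \forall\bar{\bm{x}}\in\{0,1\}^{n_1}\}$. Since $E$ is pointed, the hard inclusion is equivalent to showing that every extreme point of $E$ is $(\bm{0},0)$ or one of the listed coefficient vectors.
\begin{enumerate}
\item If no tight constraint comes from $A$, the $n_1+1$ tight homogeneous equations force $(\bm{0},0)$.
\item If $\bm{\alpha}^\top\bar{\bm{x}}^n+\beta=1$ with $\bar{\bm{x}}^n\in A$, then for binary $\bm{x}$,
$\bm{\alpha}^\top\bm{x}+\beta=1-\sum_{i\in I(\bar{\bm{x}}^n)\setminus I(\bm{x})}\alpha_i+\sum_{i\in I(\bm{x})\setminus I(\bar{\bm{x}}^n)}\alpha_i$.
\item The constraints at the Hamming neighbours of $\bar{\bm{x}}^n$ give $\alpha_i\ge 1$ for $i<n$ and $\alpha_i\le -1$ for $i\ge n+2$. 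They also give $\alpha_n\ge 0$ or $\alpha_n\ge 1$ according to whether $\bar{\bm{x}}^{n-1}\in A$, and $\alpha_{n+1}\le 0$ or $\alpha_{n+1}\le -1$ according to whether $\bar{\bm{x}}^{n+1}\in A$.
\item These neighbour constraints, plus (when $\bar{\bm{x}}^{n+1}\in A$) the single constraint at $\bar{\bm{x}}^n-\bm{e}_n+\bm{e}_{n+1}$, imply all other constraints of $E$.
\end{enumerate}
That face of $E$ is therefore a small explicit polyhedron whose vertices are exactly the listed cuts. This local computation replaces the global routing of a decomposition around $A$, which your plan leaves open.
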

We extend these results to a more general, tree structure.
\begin{definition}[2-Tree]
The union of two continuous chains \(A_1 \cup A_2\) forms a 2-tree if there exists an \(\bar{\bm{x}} \in A_1 \cap A_2\) such that, for any $\bm{x}_1 \in A_1, \bm{x}_2 \in A_2$, if $I(\bm{x}_1) \subset I(\bar{\bm{x}}), I(\bm{x}_2) \subset I(\bar{\bm{x}})$ then neither $I(\bm{x}_1) \subset I(\bm{x}_2)$ nor $I(\bm{x}_2) \subset I(\bm{x}_1)$. \hfill \(\Box\)
\end{definition}
\begin{proposition}[Convex Envelope for 2-Trees]\label{prop:convexEnvForTree}
Consider a 2-tree $A:=A_1 \cup A_2$ with \(A_1 \equiv \{\bm{x}^{(j)}: j \in \mathcal{J}\}\) and, without loss of generality, assume that $I(\bm{x}^{(j)}) \subset I(\bm{x}^{(j + 1)})$ and each \(\bm{x}^{(j)}\) is non-increasing, i.e., \(x^{(j)}_i \geq x^{(j)}_{i+1}\) (all \(1\)-entries are in the front for \(A_1\)-elements). Besides, let $i^*$ denote the entry index that distinguishes \(\bar{\bm{x}}\) from \(A_2\), i.e., $\bm{e}_{I(\bar{\bm{x}}) \setminus \{i^*\}} \in A_2$. Then, the following statements hold.
    \begin{enumerate}
        \item If $I(\bm{x}) \subset I(\bar{\bm{x}})$ for all $\bm{x} \in A_1 \cup A_2$, then
        \[
    \begin{aligned}
    \overline{\operatorname{co}} \left(I_A\right)
    = &\left\{ (\bm{x}, \theta) \in [0, 1]^d \times \mathbb{R}_+ \,\middle|\, 
    \begin{aligned}
    & \left(\bm{x}, \theta\right) \in \overline{\operatorname{co}}\left(I_{A_1}\right) \cap \overline{\operatorname{co}}\left(I_{A_2}\right)\\
    \end{aligned}
    \right\}
    \end{aligned}
    \]
    \item Otherwise,
    \[
    \begin{aligned}
    \overline{\operatorname{co}} \left(I_A\right)
    = &\left\{ (\bm{x}, \theta) \in [0, 1]^d \times \mathbb{R}_+ \,\middle|\, 
    \begin{aligned}
    & \left(\bm{x}, \theta\right) \in \overline{\operatorname{co}}\left(I_{A_1}\right) \cap \overline{\operatorname{co}}\left(I_{A_2}\right)\\
    &\begin{aligned}
        \theta \ \geq & \ \frac{1}{2} x_{|I(\bar{\bm{x}})|} + \frac{1}{2} x_{i^*} + \sum_{i \in I(\bar{\bm{x}}) \setminus \{|I(\bar{\bm{x}})|, i^*\}} x_i\\
        &- \frac{1}{2}x_{|I(\bar{\bm{x}})| + 1} - \sum_{i \notin I(\bar{\bm{x}}) \cup \left\{|I(\bar{\bm{x}})| + 1\right\}}x_i \\
        &+ 2 - |I(\bar{\bm{x}})|
    \end{aligned}
    \end{aligned}
    \right\}.
    \end{aligned}
    \]
    \end{enumerate}
\end{proposition}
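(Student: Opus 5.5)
The proof is a two-inclusion argument. The convex envelope of $I_A$ over $[0,1]^d$ is the convex hull of the points $(\bm{x}, I_A(\bm{x}))$, $\bm{x} \in \{0,1\}^d$, plus the upward ray in $\theta$. Validity therefore only needs checking at binary points, and tightness follows if the stated system defines a polytope whose vertices are all of the form $(\bm{x}, I_A(\bm{x}))$.

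\emph{Validity.} First, $\overline{\operatorname{co}}(I_A) \subseteq \overline{\operatorname{co}}(I_{A_1}) \cap \overline{\operatorname{co}}(I_{A_2})$, because $I_A = \max\{I_{A_1}, I_{A_2}\} \geq I_{A_\ell}$. For this step we use Proposition~\ref{prop:convexEnvForChain} for each chain, after permuting coordinates so that $A_2$ is also ordered with its $1$-entries in front. In case~2 we must also verify the extra inequality at every binary $\bm{x}$, writing $s := |I(\bar{\bm{x}})|$.
\begin{itemize}
\item Let $\bm{x}$ be a binary point with $\bm{x} \notin A$. Then the right-hand side is at most $0$. It can only reach $1$ when $\bm{x}$ contains all of $I(\bar{\bm{x}}) \setminus \{s, i^*\}$, has no coordinate outside $I(\bar{\bm{x}}) \cup \{s+1\}$, and has at least one of $x_s, x_{i^*}$.
\item Each such configuration corresponds to $\bar{\bm{x}}$ itself, to $\bar{\bm{x}} - \bm{e}_{i^*} \in A_2$, to $\bar{\bm{x}} - \bm{e}_s \in A_1$, or to these with $x_{s+1} = 1$ (which costs $1/2$).
\item Case~2 should mean precisely that some element of $A$ extends $\bar{\bm{x}}$, presumably $\bar{\bm{x}} + \bm{e}_{s+1}$. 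This case analysis uses continuity of the chains and the 2-tree incomparability condition, and it confirms the right-hand side never exceeds $I_A(\bm{x})$.
\end{itemize}

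\emph{Tightness.} We use Proposition~\ref{prop:convexHullProj}-style reasoning, or equivalently a dual certificate. Take an arbitrary linear objective $\min\{\theta - \bm{c}^\top \bm{x}\}$ over the proposed polyhedron $Q$, and show the minimum is attained at some binary point with $\theta = I_A(\bm{x})$. Concretely, we show every vertex of $Q$ is integral with $\theta = I_A(\bm{x})$. Take a fractional vertex and let $T$ be the set of tight inequalities. These are chain inequalities from $A_1$ or $A_2$, the extra inequality, and the bounds $0 \le x_i \le 1$ and $\theta \ge 0$.
\begin{itemize}
\item In case~1 every element of $A$ lies below $\bar{\bm{x}}$. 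The two chains then only interact through coordinates in $I(\bar{\bm{x}})$, and the chain envelopes do not conflict there. We would show that a point in the intersection decomposes as a convex combination of binary points, splitting along the coordinate $x_s$ versus $x_{i^*}$ and applying the chain result to each piece.
\item In case~2 the fractional vertices of $\overline{\operatorname{co}}(I_{A_1}) \cap \overline{\operatorname{co}}(I_{A_2})$ are half-integral. An example is the point with $x_s = x_{i^*} = x_{s+1} = 1/2$, the other coordinates of $\bar{\bm{x}}$ equal to $1$, and $\theta = 1/2$. We would show these are exactly the points cut off by the extra inequality, whose coefficients $1/2$ on $x_s, x_{i^*}, x_{s+1}$ reflect this half-integrality. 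This matches the standard pattern of odd-cycle-type inequalities being added to an intersection of two integral systems.
\end{itemize}

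\emph{Main obstacle.} The hardest step is the tightness direction in case~2: proving that adding the single extra inequality removes every fractional vertex of the intersection. The plan is to take an arbitrary $(\bm{x}, \theta) \in Q$ and build an explicit convex combination of at most three binary points whose $\theta$-values average to at most $\theta$. We sort the coordinates of $\bm{x}$ along $A_1$'s order, as in the chain proof (a staircase/threshold decomposition). We then handle the branch coordinates $s$ and $i^*$ by distributing the mass $\min\{x_s, x_{i^*}\}$ between $\bar{\bm{x}} - \bm{e}_s$, $\bar{\bm{x}} - \bm{e}_{i^*}$ and $\bar{\bm{x}}$. The extra inequality is exactly what guarantees that the residual $\theta$ suffices to pay for this split.
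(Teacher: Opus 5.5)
Your validity direction is sound. $I_A=\max\{I_{A_1},I_{A_2}\}$ gives $\overline{\operatorname{co}}(I_A)\subseteq\overline{\operatorname{co}}(I_{A_1})\cap\overline{\operatorname{co}}(I_{A_2})$. At binary points, the right-hand side of the extra inequality is positive only at $\bar{\bm{x}}$ (value $1$) and at $\bar{\bm{x}}-\bm{e}_s$, $\bar{\bm{x}}-\bm{e}_{i^*}$, $\bar{\bm{x}}+\bm{e}_{s+1}$ (value $\tfrac{1}{2}$). So it is valid once these points lie in $A$, which you, like the paper, have to read into the statement.

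The gap is the reverse inclusion, which is the real content of the proposition and which you only announce. Nothing in your plan controls which combinations of chain inequalities, the extra inequality and the bounds can be tight at once, and the concrete claims you do make are wrong:
\begin{itemize}
\item Take your example point $\bm{x}$, with $x_s=x_{i^*}=x_{s+1}=\tfrac{1}{2}$ and the rest of $I(\bar{\bm{x}})$ at $1$. There the extra inequality's right-hand side is $\tfrac{1}{4}$, so $(\bm{x},\tfrac{1}{2})$ is not cut off.
\item The fractional vertex of the chain intersection is $(\bm{x},0)$. At it, $\theta\ge 0$, the box constraints, and the chain cuts anchored at $\bar{\bm{x}}$, $\bar{\bm{x}}-\bm{e}_s$, $\bar{\bm{x}}-\bm{e}_{i^*}$ are tight. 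The envelope value at $\bm{x}$ is $\tfrac{1}{4}$.
\item $(\bm{x},\tfrac{1}{4})$ is the barycenter of $\bar{\bm{x}}$, $\bar{\bm{x}}-\bm{e}_s-\bm{e}_{i^*}$, $\bar{\bm{x}}-\bm{e}_s+\bm{e}_{s+1}$, $\bar{\bm{x}}-\bm{e}_{i^*}+\bm{e}_{s+1}$. These are the only binary points where the extra inequality is tight, so no convex combination of ``at most three binary points'' can certify it. In general up to $d+1$ points are needed.
\item The half-integrality of the fractional vertices, the claim that the extra inequality removes exactly these, and the case-1 ``splitting along $x_s$ versus $x_{i^*}$'' are asserted, not shown.
\end{itemize}

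What you are missing is a mechanism that turns completeness into a finite check; the paper gets it from the dual. Since $\overline{\operatorname{co}}(I_A)(\bm{x})=\max\{\bm{\alpha}^\top\bm{x}+\beta:(\bm{\alpha},\beta)\in E\}$, where $E$ is the polyhedron of affine minorants of $I_A$ on $\{0,1\}^d$, it suffices to list the vertices of $E$. A vertex either has only $A^c$-constraints tight, giving $\theta\ge 0$, or is tight at some $\bm{v}\in A$. The face $E\cap\{\bm{\alpha}^\top\bm{v}+\beta=1\}$ is cut out by the constraints at a handful of binary points near $\bm{v}$.
\begin{itemize}
\item For $\bm{v}\neq\bar{\bm{x}}$, this local system coincides with that of the chain containing $\bm{v}$, so only chain cuts arise.
\item For $\bm{v}=\bar{\bm{x}}$, every coefficient other than $(\alpha_s,\alpha_{i^*},\alpha_{s+1})$ sits at its bound $\pm 1$ at a vertex. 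The remaining system is $\alpha_s,\alpha_{i^*}\ge 0$, $\alpha_s+\alpha_{i^*}\ge 1$, $\alpha_{s+1}\le\min\{0,\alpha_s-1,\alpha_{i^*}-1\}$. It has exactly four vertices: the three chain cuts and $(\tfrac{1}{2},\tfrac{1}{2},-\tfrac{1}{2})$, which is the extra inequality.
\item When $\bar{\bm{x}}+\bm{e}_{s+1}\notin A$, the bound becomes $\alpha_{s+1}\le -1$. Then only the chain cuts anchored at $\bar{\bm{x}}-\bm{e}_s$ and $\bar{\bm{x}}-\bm{e}_{i^*}$ survive, which gives statement~1.
\end{itemize}
A primal proof along your lines would need an equally finite certificate. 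One option is an explicit subdivision of $[0,1]^d$ into simplices with $0/1$ vertices, on each of which one listed inequality is tight at every vertex. As written, the proposal supplies no such certificate.
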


\subsubsection{Enumeration of partitions.} The second strategy takes advantage of the piecewise \emph{constant} structure of the regularizers~\eqref{eq:simpleGFunction}. In particular, because the closure of each partition \(\operatorname{cl}(\mathcal{P}^t_k)\) admits a polyhedral representation and the regularizer \(g_i(\mathcal{X}^t, \bm{x})\) equals a constant \(g^*_i(\bm{x}_{(k)})\) on \(\mathcal{P}^t_k\), we can recast the scenario subproblem \(\min_{\bm{x}}\{f_i(\bm{x})+g_i(\mathcal{X}^t, \bm{x})\}\) as
\begin{align*}
\min_{\bm{x} \in X} \big\{f_i(\bm{x}) + g_i(\mathcal{X}^t, \bm{x})\big\} \ = \ \min_{k \in [K_t]} \bigg\{ \min_{\bm{x} \in \operatorname{cl}(\mathcal{P}_k^t)} \big\{f_i(\bm{x})\big\} + g^*_i(\bm{x}_{(k)})\bigg\}.
\end{align*}
This decomposes each scenario subproblem further into \(K_t\) smaller formulations, which once again can be solved in parallel. In addition, we notice that each formulation \(\min_{\bm{x} \in \operatorname{cl}(\mathcal{P}_k^t)} \{f_i(\bm{x})\}\) is a MILP when \(f_i\) arises from a two-stage stochastic MILP with CCR. In our experience, this strategy is oftentimes more effective in speeding up~\ref{problem:generalized_dual_decomposition} than the strong valid inequalities in the last section.

\subsubsection{Nested Voronoi Partitions and Partition Pruning.}
Although the enumeration-based solution to the scenario subproblems avoids introducing auxiliary binary variables and big-M coefficients, it still requires solving a restricted subproblem \(\min_{\bm{x} \in \operatorname{cl}(\mathcal{P}_k^t)} \{f_i(\bm{x})\}\) for each partition cell. Under a standard Voronoi partition, adding even a single new \(\bm{x}_{(k)}\) may change the entire partition, and none of the previously computed restricted subproblems can be reused. To address this issue, our third strategy introduces a \emph{nested} Voronoi partition that refines the partition only locally and therefore preserves the previously computed solutions on the unchanged cells.


Specifically, for the partition \(\{\mathcal{P}^t_k: k \in [K_t]\}\) in iteration \(t\), we denote by \(\mathcal{I}(k) := \{i \in [N]: \bm{x}^{i*} \in \mathcal{P}^t_k\}\) the indices of subproblem optimal solutions \(\bm{x}^{i*}\) lying in partition cell \(\mathcal{P}^t_k\). Then, in iteration \(t+1\), we keep \(\mathcal{P}^t_k\) unchanged if \(\mathcal{I}(k)=\varnothing\) (i.e., if no new optimal solutions appeared in this cell) and only apply a around of Voronoi partition within those \(\mathcal{P}^t_k\) with \(\mathcal{I}(k)\neq\varnothing\), based on the subset of new solutions \(\{\bm{x}^{i*}: i \in \mathcal{I}(k)\}\) appearing in those cells.

Furthermore, we mimic the branch-and-bound method to prune partition cells. To that end, for each scenario $i\in[N]$ and each partition cell $k \in [K_t]$, we let
\[
v_{k}^{it} \ := \ \min_{\bm{x} \in \operatorname{cl}(\mathcal{P}_k^t)} \big\{f_i(\bm{x})\big\} + g^*_i(\bm{x}_{(k)})
\]
denote the optimal value of the scenario subproblem restricted to cell $\mathcal{P}_k^t$. Because $\sum_{i \in [N]}v_{k}^{it}$ is a valid lower bound for the objective value of any solution in this cell, the inequality
\[
\sum_{i \in [N]}v_{k}^{it} > \mathit{ub},
\]
where \(ub\) pertains to the best incumbent solution by far, implies that no solution in \(\mathcal{P}_k^t\) will outperform the current incumbent and thus \(\mathcal{P}_k^t\) may be discarded from future consideration in scenario subproblem $i$. In Section~\ref{sec:numerical}, we demonstrate that this strategy speeds up the convergence of~\ref{problem:generalized_dual_decomposition} significantly. 

\section{Extensions of GDD} \label{sec:extensions}
We extend~\ref{problem:generalized_dual_decomposition} to a more general, constrained form. We shall see in Sections~\ref{sec:robustOptimization}--\ref{sec:bilevelOptimization} that this generalization allows us to apply~\ref{problem:generalized_dual_decomposition} (and hence its parallel-computing implementation) to several popular models in real-world applications, including robust optimization, chance constraints, and bilevel optimization with multiple followers. We start by presenting the constrained form of~\ref{problem:generalized_dual_decomposition}.
\begin{theorem}[GDD with constraints] \label{prop:gddCons}
Consider a constrained optimization model
\begin{align}
u^* \ := \ \min_{\bm{x}} \ & \ f(\bm{x}) \nonumber \\
\operatorname{s.t.} \ & \ \bm{x} \in \bigcap_{i=1}^N X_i, \tag{C-SP} \label{problem:consP}
\end{align}
where each \(X_i \subseteq \mathbb{Z}^{p_1} \times \mathbb{R}^{n_1-p_1}\) denotes a mixed-integer set and $f: \mathbb{R}^{n_1} \rightarrow \mathbb{R}$ denotes a deterministic objective function such that 
$0 \leq \inf_{\bm{x} \in \cup_{i \in [N]}X_i} f(\bm{x}) \leq \sup_{\bm{x} \in \cup_{i \in [N]} X_i} f(\bm{x}) < \infty$. 
Then,~\eqref{problem:consP} admits the following~\ref{problem:generalized_dual_decomposition} representation:
\begin{align}
u^* \ = \ \max_{g_i(\cdot)} \ & \ \frac{1}{N}\sum_{i=1}^N\min_{\bm{x}^i \in X_i} \big\{f(\bm{x}^i) + g_i(\bm{x}^i) \big\} \nonumber \\
\operatorname{s.t.} \ & \ \sum_{i=1}^N g_i(\bm{x}) = 0 \qquad \forall \bm{x} \in \bigcup_{i=1}^N X_i \tag{C-GDD} \label{problem:consD} \\
& \ g_i: \ \ \bigcup_{i=1}^N X_i \rightarrow \mathbb{R} \quad \forall i \in [N]. \nonumber
\end{align}
\end{theorem}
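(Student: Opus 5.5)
The plan is to prove the two inequalities directly: weak duality by evaluating every scenario subproblem at a common feasible point, and strong duality by writing down an explicit regularizer. Both are pointwise arguments in the spirit of Theorem~\ref{prop:strong_duality}, and I do not expect to need the unconstrained theorem itself. Throughout, write $U := \bigcup_{i=1}^N X_i$ and $C := \bigcap_{i=1}^N X_i$, and read each ``$\min$'' as an infimum where attainment is not guaranteed.

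\emph{Weak duality.} Take any feasible $\{g_i\}$ for~\eqref{problem:consD} and any $\bm{x}\in C$.
\begin{itemize}
\item Since $\bm{x}\in X_i$ for every $i$, each subproblem satisfies $\min_{\bm{x}^i\in X_i}\{f(\bm{x}^i)+g_i(\bm{x}^i)\}\le f(\bm{x})+g_i(\bm{x})$.
\item Summing over $i$ and using $\sum_i g_i(\bm{x})=0$ (valid because $\bm{x}\in U$), the objective of~\eqref{problem:consD} is at most $\frac{1}{N}\cdot N f(\bm{x})=f(\bm{x})$.
\item Taking the infimum over $\bm{x}\in C$ gives a dual value of at most $u^*$.
\item If $C=\varnothing$, then $u^*=+\infty$ and there is nothing to prove.
\end{itemize}

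\emph{Strong duality.} I construct regularizers that make every subproblem value at least $u^*$ when $C\neq\varnothing$, and arbitrarily large when $C=\varnothing$. Let $a$ be a constant, equal to $u^*$ in the feasible case and to an arbitrary $M>0$ in the infeasible case. Note $u^*<\infty$ when $C\neq\varnothing$, by the finiteness of $\sup_U f$. For $\bm{x}\in U$ let $S(\bm{x}):=\{i:\bm{x}\in X_i\}$ and $m(\bm{x}):=N-|S(\bm{x})|$. Define
\begin{itemize}
\item $g_i(\bm{x}):=0$ for all $i$ when $\bm{x}\in C$, i.e., when $m(\bm{x})=0$;
\item otherwise, $g_i(\bm{x}):=a$ for $i\in S(\bm{x})$, and $g_i(\bm{x}):=-(N-m(\bm{x}))\,a/m(\bm{x})$ for $i\notin S(\bm{x})$.
\end{itemize}
In both cases $\sum_i g_i(\bm{x})=0$ on $U$, so the family is feasible for~\eqref{problem:consD}.

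Now evaluate subproblem $i$. It only queries points $\bm{x}\in X_i$, so only the values $g_i(\bm{x})$ with $i\in S(\bm{x})$ ever matter; the large negative values sit exactly at infeasible points where subproblem $i$ never looks. There are two kinds of points:
\begin{itemize}
\item If $\bm{x}\in C$, then $f(\bm{x})+g_i(\bm{x})=f(\bm{x})\ge u^*$ by definition of $u^*$.
\item If $\bm{x}\in X_i\setminus C$, then $f(\bm{x})+g_i(\bm{x})=f(\bm{x})+a\ge a$, using $f\ge 0$ on $U$.
\end{itemize}
In the feasible case, $a=u^*$, so every subproblem value is at least $u^*$. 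The dual objective is therefore at least $u^*$, which matches weak duality. In the infeasible case, $C=\varnothing$, so only the second kind of point occurs and every subproblem value is at least $M$. Letting $M\to\infty$ shows the dual value is $+\infty=u^*$.

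The main subtlety is the handling of infeasible points $\bm{x}\in U\setminus C$, which is where the hypotheses $f\ge 0$ and $\sup_U f<\infty$ are used. One needs to see that the compensating negative values of $g_i$ can be placed only on scenarios whose feasible set excludes $\bm{x}$. A second point is that ``$\max$'' in~\eqref{problem:consD} must be interpreted as a supremum (attained when $C\neq\varnothing$) and ``$\min$'' as an infimum. The explicit construction above makes both of these harmless.
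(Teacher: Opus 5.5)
Your proof is correct. Weak duality is the same as in the paper, but you calibrate the strong-duality regularizer differently.

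The paper extends $f$ to $F_i := f$ on $X_i$ and $F_i := M$ off $X_i$, with $M > N\sup_U f$. It then sets $g_i := \frac{1}{N}\sum_j F_j - F_i$, which is exactly the optimal-regularizer formula of Proposition~\ref{prop:optimal-regularizer} applied to the penalized functions. Every subproblem then becomes $\min_{\bm{x}\in X_i}\frac{1}{N}\sum_j F_j(\bm{x})$. A minimizer must lie in $C$, because a point outside some $X_{j^*}$ has value at least $M/N > \sup_U f$.

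This regularizer has the same shape as yours. It vanishes on $C$. At $\bm{x}\in U\setminus C$ it equals $\frac{m(\bm{x})}{N}\bigl(M-f(\bm{x})\bigr)>0$ for scenarios with $\bm{x}\in X_i$, and is negative for the others. The difference is that the penalty level comes from a big-$M$ rather than from $u^*$.

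\begin{itemize}
\item \textbf{What the paper's version buys.} It never refers to the unknown value $u^*$. It is literally the unconstrained formula applied to $F_i$, which makes it the natural object for Algorithm~\ref{alg:MIPGDD} to approximate. It uses both hypotheses: $\sup_U f<\infty$ to choose $M$, and $f\ge 0$ to get $\sum_j F_j \ge F_{j^*}$.
\item \textbf{What your version buys.} You need only $f\ge 0$ on $U$. Your remark that $u^*<\infty$ follows from $\sup_U f<\infty$ is unnecessary, since $u^*\le f(\bar{\bm{x}})$ for any $\bar{\bm{x}}\in C$. You also handle $C=\varnothing$ and non-attainment of $u^*$ and of the inner minima. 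The paper's proof assumes these away tacitly when it picks an optimal $\bm{x}^*$, a point $\bar{\bm{x}}\in C$, and minimizers $\hat{\bm{x}}^i$.
\end{itemize}

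Any constant $a\ge u^*$ works in your construction. Taking $a:=\sup_U f$ therefore gives a $u^*$-free version of your argument under exactly the paper's hypotheses.
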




\subsection{Robust Optimization} \label{sec:robustOptimization}
Robust optimization considers a cost function \(f(\bm{x}, \bm{\xi}): X \times \Xi \rightarrow \mathbb{R}\) under uncertainty described by a random vector \(\bm{\xi}\), supported on a finite set \(\Xi := \{\bm{\xi}^i: i \in [N]\}\) of scenarios. Instead of minimizing the expectation of the cost function as in~\eqref{problem:primal}, robust optimization seeks to hedge against the worst-case outcome of \(\bm{\xi}\),
\begin{align}
v_{\text{RO}} \ := \ \min_{\bm{x} \in X} \max_{\bm{\xi} \in \Xi} \ f(\bm{x}, \bm{\xi}). \tag{RO} \label{problem:robust}
\end{align}
We remark that function \(f\) may be nonlinear and even nonconvex, for example, \(f\) may arise from the optimal value of a MILP subproblem. This renders the standard duality techniques~\citep[see, e.g.,][]{ben2009robust} inapplicable. Nevertheless, because we consider finite possibilities of \(\bm{\xi}\),~\eqref{problem:robust} can be recast as an epigraphical form by introducing an auxiliary variable $\theta$ to obtain
\begin{align*}
v_{\text{RO}} \ = \ \min_{(\bm{x}, \theta) \in X \times \mathbb{R}} \ & \ \theta \\
\text{s.t.} \ & \ \theta \geq f(\bm{x}, \bm{\xi}^i) \quad \forall i \in [N].
\end{align*}
Now, if we specify the sets \(X_i := \left\{(\bm{x}, \theta) \in X \times \mathbb{R}: \theta \geq f(\bm{x}, \bm{\xi}^i)\right\}\) for all \(i \in [N]\), then Theorem~\ref{prop:gddCons} produces the following~\ref{problem:generalized_dual_decomposition} representation for~\eqref{problem:robust}:
\begin{align*}
v_{\text{RO}} \ = \ \max_{g_i(\cdot, \cdot)} \ & \ \frac{1}{N}\sum_{i=1}^N\min_{(\bm{x}^i, \theta^i) \in X_i} \big\{f(\bm{x}^i, \bm{\xi}^i) + g_i(\bm{x}^i, \theta^i) \big\} \nonumber \\
\operatorname{s.t.} \ & \ \sum_{i=1}^N g_i(\bm{x}, \theta) = 0 \qquad \forall (\bm{x}, \theta) \in \bigcup_{i=1}^N X_i \\
& \ g_i: \ \ \bigcup_{i=1}^N X_i \rightarrow \mathbb{R} \qquad \forall i \in [N]. \nonumber
\end{align*}
In the above representation, each subproblem involves only one scenario \(\bm{\xi}^i\), becomes a deterministic optimization model, and can be solved in parallel with other subproblems. Hence, one can adapt Algorithm~\ref{alg:MIPGDD} to solve~\eqref{problem:robust}.


\subsection{Chance-Constraint Program}
\label{sec:chanceConstraints}
When the randomness \(\bm{\xi}\) arises from safety conditions of a system, a convenient way towards safe operations is to adopt a chance-constraint program,
\begin{align}
v_{\text{CC}} \ := \ \min_{\bm{x} \in X} \ & \ f(\bm{x}) \nonumber \\
\text{s.t.} \ & \ \mathbb{P}\Big\{S(\bm{x}, \bm{\xi}) \leq 0\Big\} \ \geq \ 1 - \beta, \tag{CC} \label{problem:chance}
\end{align}
where \(f(\cdot)\) evaluates the operational cost, \(\mathbb{P}\) denotes an empirical probability distribution of \(\bm{\xi}\) through its past observations \(\{\bm{\xi}^i: i \in [N]\}\), inequality \(S(\bm{x}, \bm{\xi}) \leq 0\) models the system safety conditions, and \(\beta \in (0, 1)\) denotes a risk threshold when operating the system. With the help of auxiliary binary variables \(z_i\) indicating whether \(S(\bm{x}, \bm{\xi}^i) \leq 0\), we rewrite~\eqref{problem:chance} as a mixed-integer program~\citep[see, e.g.,][]{luedtke2014branch},
\begin{align*}
v_{\text{CC}} \ = \ \min_{\bm{x} \in X, \bm{z} \in \left\{0, 1\right\}^N} \ & \ f(\bm{x}) \\
\text{s.t.} \ & \ z_i = 0 \ \Rightarrow \ S(\bm{x}, \bm{\xi}^i) \leq 0 \quad \forall i \in [N] \\
& \ \frac{1}{N}\sum_{i=1}^N z_i \leq \beta.
\end{align*}
We remark that the inequalities \(S(\bm{x}, \bm{\xi}^i) \leq 0\) may take various forms and existing works have provided techniques for reformulating and strengthening the associated logic constraints using MILP~\citep[see, e.g.,][]{luedtke2010integer,kuccukyavuz2022chance}. Now, if we specify the sets \(X_i:= \{(\bm{x}, \bm{z}) \in X \times \{0, 1\}^N: z_i = 0 \ \Rightarrow \ S(\bm{x}, \bm{\xi}^i) \leq 0, \ \bm{e}^{\top} \bm{z} \leq \beta N\}\) for all \(i \in [N]\), then Theorem~\ref{prop:gddCons} produces the following~\ref{problem:generalized_dual_decomposition} representation for~\eqref{problem:chance}:
\begin{align*}
v_{\text{CC}} \ = \ \max_{g_i(\cdot, \cdot)} \ & \ \frac{1}{N}\sum_{i=1}^N\min_{(\bm{x}^i, \bm{z}^i) \in X_i} \big\{f(\bm{x}^i) + g_i(\bm{x}^i, \bm{z}^i) \big\} \nonumber \\
\operatorname{s.t.} \ & \ \sum_{i=1}^N g_i(\bm{x}, \bm{z}) = 0 \qquad \forall (\bm{x}, \bm{z}) \in \bigcup_{i=1}^N X_i \\
& \ g_i: \ \ \bigcup_{i=1}^N X_i \rightarrow \mathbb{R} \qquad \forall i \in [N]. \nonumber
\end{align*}
We notice that, in the above representation, each subproblem involves the logic constraint for an individual scenario \(\bm{\xi}^i\) only and thus becomes deterministic. As in the~\ref{problem:consD} representation for~\eqref{problem:robust}, all subproblems in this~\eqref{problem:chance} representation can be solved in parallel, and hence one can adapt Algorithm~\ref{alg:MIPGDD} to solve~\eqref{problem:chance}.



\subsection{Bilevel Optimization with Multiple Followers}
\label{sec:bilevelOptimization}
The final example is concerned with bilevel optimization, where a leader and a group of \(N\) followers take actions sequentially in a Stackelberg manner. We denote the leader's actions by \(\bm{x}\) and the followers' reactions by \(\bm{y}_{1:N}:=[\bm{y}_1, \ldots, \bm{y}_N]\), where \(\bm{y}_i\) denotes the actions of follower \(i\). In anticipation of the followers' (optimal) reactions, the leader solves a model
\begin{align}
v_{\text{BM}} \ := \ \min_{\bm{x}, \bm{y}_{1:N}} \ & \ f(\bm{x}, \bm{y}_{1:N}) \nonumber \\
\text{s.t.} \ & \ \bm{y}_i \in \arg\min_{\bm{y}} \{h_i(\bm{x}, \bm{y})\} \quad \forall i \in [N] \tag{BM} \label{model:bilevel} \\
& \ \left(\bm{x}, \bm{y}_{1:N}\right) \in X, \nonumber
\end{align}
where \(f: \mathbb{R}^{n_1+Nn_2} \rightarrow \mathbb{R}\) and \(\{h_i: \mathbb{R}^{n_1+n_2} \rightarrow \mathbb{R}, i \in [N]\}\) represent the leader's and the followers' cost functions, respectively. A computational challenge of solving~\eqref{model:bilevel} is the co-existence of multiple optimality conditions \(\bm{y}_i \in \arg\min_{\bm{y}} \{h_i(\bm{x}, \bm{y})\}\) in one formulation, which amplifies its size drastically when \(N\) is large. We can alleviate this challenge by applying Theorem~\ref{prop:gddCons} and recasting~\eqref{model:bilevel} as
\begin{align*}
v_{\text{BM}} \ = \ \max_{g_i(\cdot, \cdot)} \ & \ \frac{1}{N}\sum_{i=1}^N\min_{\big(\bm{x}^i, \bm{y}^i_{1:N}\big) \in X_i} \big\{f(\bm{x}^i, \bm{y}^i_{1:N}) + g_i(\bm{x}^i, \bm{y}^i_{1:N}) \big\} \nonumber \\
\operatorname{s.t.} \ & \ \sum_{i=1}^N g_i(\bm{x}, \bm{y}_{1:N}) = 0 \qquad \forall (\bm{x}, \bm{y}_{1:N}) \in \bigcup_{i=1}^N X_i \\
& \ g_i: \ \ \bigcup_{i=1}^N X_i \rightarrow \mathbb{R} \qquad \quad \forall i \in [N], \nonumber
\end{align*}
where \(X_i := \{(\bm{x}, \bm{y}_{1:N}) \in X: \bm{y}_i \in \arg\min_{\bm{y}} \{h_i(\bm{x}, \bm{y})\}\}\). In contrast to~\eqref{model:bilevel}, this~\ref{problem:consD} representation distributes the \(N\) optimality conditions among subproblems, each of which is now a bilevel model with an individual follower. This allows us to solve these simpler bilevel models in parallel using Algorithm~\ref{alg:MIPGDD}.



\section{Numerical Experiments} \label{sec:numerical}

We conduct numerical experiments to demonstrate the proposed~\eqref{problem:generalized_dual_decomposition} formulation and Algorithm~\ref{alg:MIPGDD} for solving~\eqref{problem:primal}. We report the results for \(\bm{x}\) being binary in Section~\ref{sec:GDDForBinaryNumerical} and \(\bm{x}\) being mixed-integer in Section~\ref{sec:GDD-mit-numerical}. In both cases, we report
\begin{enumerate}[label=(\roman*)]
    \item the optimality gap proved by~\ref{problem:dual_decomposition} and~\ref{problem:generalized_dual_decomposition};
    \item the comparisons with solving deterministic equivalent formulations (DEF, solved by Gurobi) and with implementing primal (Benders-type) decomposition algorithms; and
    \item the speedup of~\ref{problem:generalized_dual_decomposition} through the strategies in Section~\ref{subsec:improvement} and through parallel computing.
\end{enumerate}

We implemented~\ref{problem:generalized_dual_decomposition} and all benchmark methods on the Great Lakes Cluster provided by the information and technology services at the University of Michigan. For each experiment, we used 32 CPU cores from the Intel(R) Xeon(R) Gold 6154 CPU @ 3.00GHz and a RAM of 32GB.

\subsection{GDD for binary tender} \label{sec:GDDForBinaryNumerical}
We solve stochastic multiple binary knapsack (SM\(b\)K) problem instances~\citep[see][]{angulo2016improving} with additional constraint uncertainty. For completeness, we describe its model as follows:
\begin{align}
    \min_{\bm{x} \in \{0, 1\}^{n}, \bm{z} \in \{0, 1\}^{n}} \ & \ \bm{c}^\top \bm{x} + \bm{d}^\top \bm{z} + \mathcal{Q}(\bm{x}) \tag{SM\(b\)K} \\
    \text{s.t. } \ & \ \bm{A}\bm{x} + \bm{C}\bm{z} \geq \bm{b}, \notag
\end{align}
where $\mathcal{Q}(\bm{x}):=\mathbb{E}\left[Q_{\bm{\xi}}\left(\bm{x}\right)\right]$, \(\bm{\xi}\) follows a discrete probability distribution with \(N\) scenarios, and 
\begin{align*}
    Q_{\bm{\xi}}\left(\bm{x}\right):=\min_{y\in \left\{0, 1\right\}^n} \ & \ \bm{q}\left(\bm{\xi}\right)^\top\bm{y}\\
    \text{s.t. } \ & \ \bm{W}\bm{y} \geq \bm{h}(\bm{\xi}) - \bm{T}(\bm{\xi})\bm{x}
\end{align*}
with $\bm{c} \in \mathbb{R}^n, \bm{d} \in \mathbb{R}^n, \bm{A} \in \mathbb{R}^{m_1 \times n}, \bm{C} \in \mathbb{R}^{m_1 \times n}, \bm{b} \in \mathbb{R}^{m_1}, \bm{q}(\bm{\xi}) \in \mathbb{R}^n, \bm{W} \in \mathbb{R}^{m_2 \times n}, \bm{h}(\bm{\xi}) \in \mathbb{R}^{m_2 \times n}$, and $\bm{T}(\bm{\xi}) \in \mathbb{R}^{m_2 \times n}$.

\paragraph{Instance Generation.} We follow~\cite{angulo2016improving} to generate test instances with \(n = 10\), \(m_1 = 50\), \(m_2 = 20 \). We randomly sample the parameters from uniform distributions: entries of $\bm{A}, \bm{C}$ follow \(\text{Unif}[0, 100]\), entries of $\bm{T}(\xi)$ follow $\text{Unif}[-50, 100]$, entries of $\bm{W}$ follow $\text{Unif}[20, 80]$, entries of $\bm{b}$ follow $\text{Unif}[0.5 (\bm{A} \bm{e} + \bm{C} \bm{e}), 0.75 (\bm{A} \bm{e} + \bm{C} \bm{e})]$, each entry $\bm{h}_i(\xi), i\in [m_2]$ follows $\text{Unif}[0.5 (\bm{W}_i^{\top}\bm{e} + \bm{T}_i(\xi)^{\top} \bm{e}), 0.75 (\bm{W}_i^{\top}\bm{e} + \bm{T}_i(\xi)^{\top} \bm{e})]$ if $(\bm{W}_i^{\top}\bm{e} + \bm{T}_i(\xi)^{\top} \bm{e}) \geq 0$ or otherwise follows $\text{Unif}[1.5 (\bm{W}_i^{\top}\bm{e} + \bm{T}_i(\xi)^{\top} \bm{e}), 1.25 (\bm{W}_i^{\top}\bm{e} + \bm{T}_i(\xi)^{\top} \bm{e})]$, entries of $\bm{c}$ follow $\text{Unif}[150, 250]$, entries of $\bm{d}$ follow $\text{Unif}[0, 100]$, and entries of $\bm{q}(\xi)$ follow $\text{Unif}[50, 150]$.


\begin{table}[htbp]
\centering
\begin{tabular}{lccccc}
\hline
\textbf{Methods} & \makecell{\textbf{Average}\\\textbf{Time (s)}} &
\makecell{\textbf{Average}\\\textbf{\# Iterations}} &
\makecell{\textbf{Average}\\\textbf{Gap (\%)}} &
\makecell{\textbf{Strong Duality}\\\textbf{Achieved (\%)}} \\
\hline
\ref{problem:dual_decomposition} & 119.84 & 815.55 & 0.63 & 20 \\
\ref{problem:generalized_dual_decomposition} & 9.58 & 48.55 & 0.00 & 100 \\
\hline
\end{tabular}
\caption{Convergence Comparison of~\ref{problem:dual_decomposition} and~\ref{problem:generalized_dual_decomposition} in the (SM\(b\)K) instances}
\label{tab:strongDuality}
\end{table}

\begin{figure}[htbp]
    \centering
    \begin{subfigure}{0.48\textwidth}
        \centering
        \includegraphics[width=\linewidth]{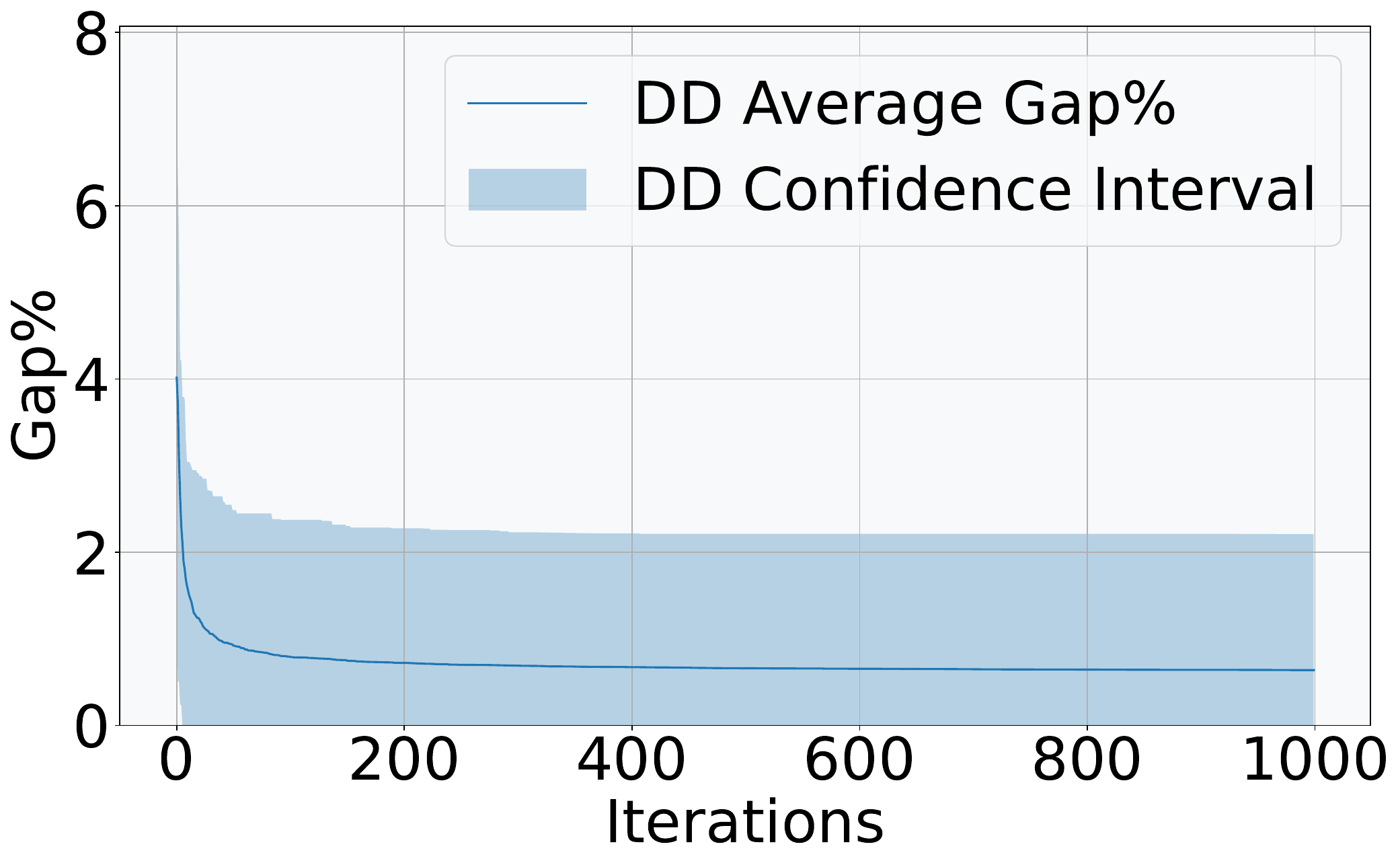}
        \caption{Average gap of~\ref{problem:dual_decomposition} versus iterations}
        \label{fig:ddConvergence}
    \end{subfigure}
    \hfill
    \begin{subfigure}{0.48\textwidth}
        \centering
        \includegraphics[width=\linewidth]{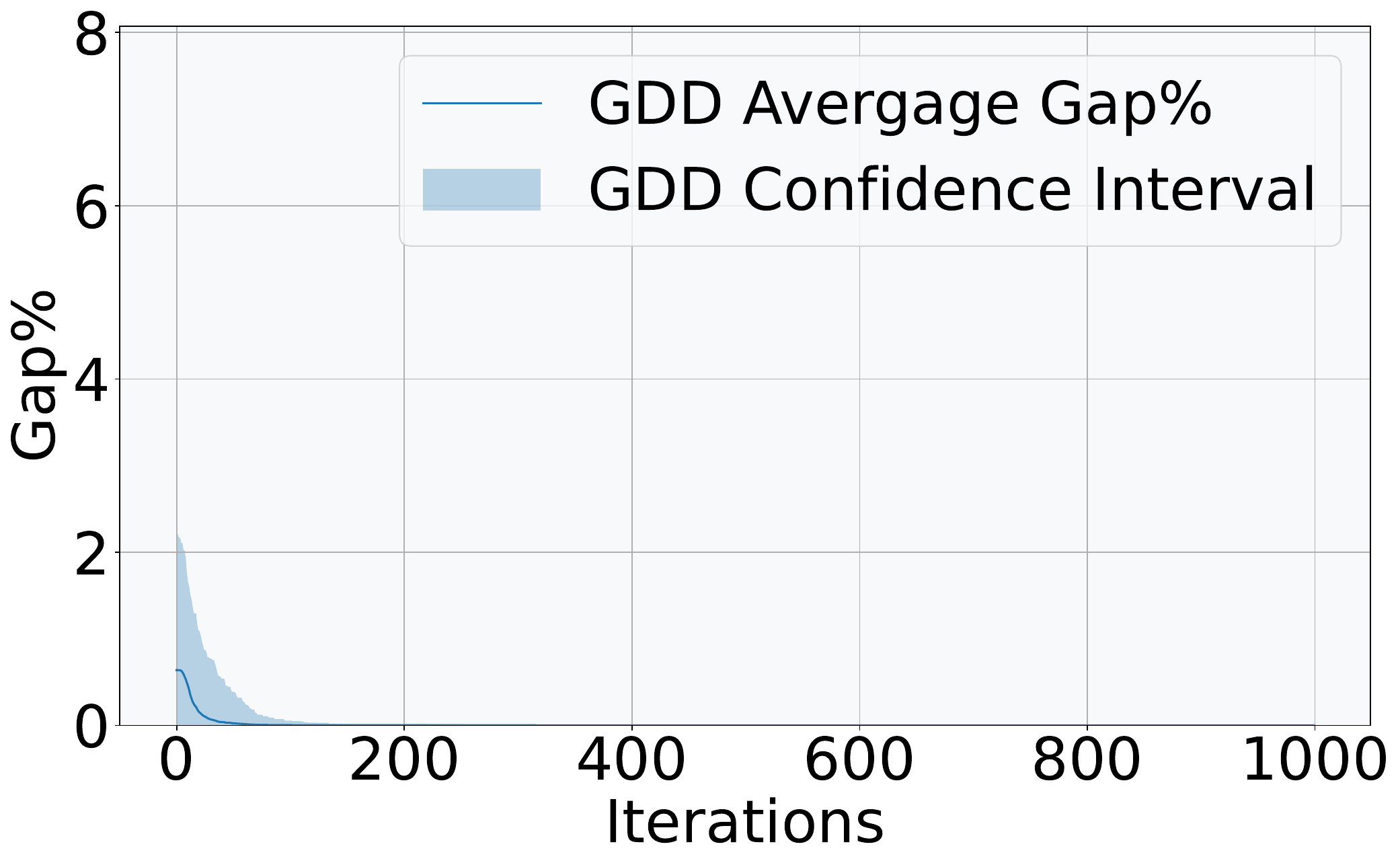}
        \caption{Average gap of~\ref{problem:generalized_dual_decomposition} versus iterations}
        \label{fig:gddConvergence}
    \end{subfigure}
    \caption{Comparison of the optimality gap proved by~\ref{problem:dual_decomposition} and~\ref{problem:generalized_dual_decomposition} in (SM\(b\)K) instances. The shaded region covers from the minimum to the maximum optimality gap across 20 random instances.}
    \label{fig:strongDuality}
\end{figure}

\subsubsection{Strong duality of~\ref{problem:generalized_dual_decomposition}}\label{subsec:ExpStrongDuality}
We compare the optimality gap achieved by~\ref{problem:dual_decomposition} and~\ref{problem:generalized_dual_decomposition} on 20 randomly generated instances with $n = 10$, $m_1 = 50$, $m_2 = 20$, and \(N = 5\). The maximum iteration number of~\ref{problem:dual_decomposition} and~\ref{problem:generalized_dual_decomposition} are set to be 1,000. We report the results in Table~\ref{tab:strongDuality} and Fig.~\ref{fig:strongDuality}. In Table~\ref{tab:strongDuality}, we report the average time, average number of iterations to achieve strong duality (i.e., zero optimality gap), average ending optimality gap, and the proportion of instances where strong duality is achieved. From this table, we observe that~\ref{problem:dual_decomposition} failed to converge to global optimum in 80\% (16/20) of the instances. In contrast, using~\ref{problem:dual_decomposition} as warm up,~\ref{problem:generalized_dual_decomposition} was able to close the gap in all instances. 
In addition, Fig.~\ref{fig:strongDuality} indicates that~\ref{problem:dual_decomposition} quickly shrank the optimality gap to around 1\% and then got stuck after about 400 iterations. In contrast,~\ref{problem:generalized_dual_decomposition} closed the gap with about 100 additional iterations. This demonstrates the strong duality of~\ref{problem:generalized_dual_decomposition} as in Theorem~\ref{prop:strong_duality}.

\begin{table}[htbp]
\centering
\begin{tabular}{cccccc}
\toprule
\# of Processes & 1 & 2 & 3 & 4 & 5 \\ 
\midrule
Average Time (s)   & 601.91 & 365.46  & 226.96 & 215.97 & 129.43   \\ 
Speedup & 1x & 1.65x & 2.65x & 2.79x & 4.65x\\
\bottomrule
\end{tabular}
\caption{Average wall clock time and speedup of the parallel implementation of~\ref{problem:generalized_dual_decomposition} in (SM\(b\)K) instances. For each \(p \in [5]\), the speedup equals the average wall clock time of solving 20 random instances using 1 process divided by that of using \(p\) processes.}
\label{tab:speedUp}
\end{table}

\begin{figure}
    \centering
    \includegraphics[width=0.6\linewidth]{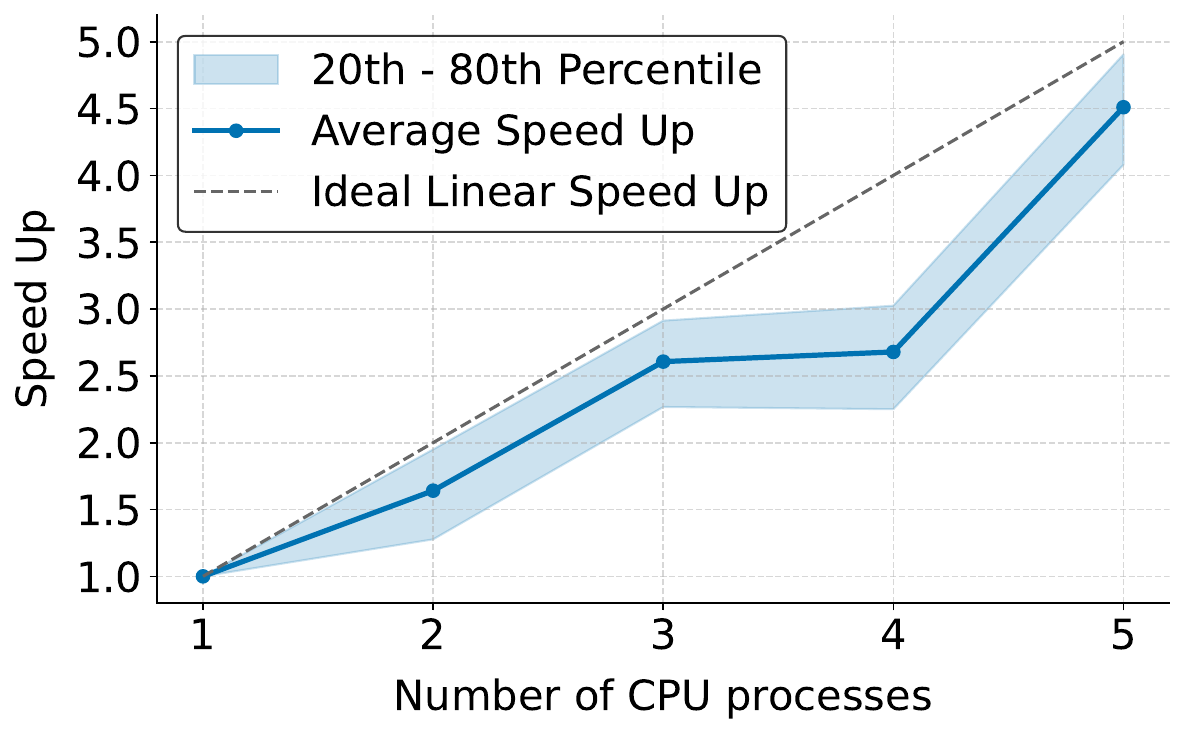}
    \caption{Visualization of speedup of parallel~\ref{problem:generalized_dual_decomposition} in (SM\(b\)K) instances. For reference, the dotted line indicates a linear speedup.}
    \label{fig:speedUp}
\end{figure}

\subsubsection{Speedup through parallel computing} \label{subsec:expParallel}
We evaluate the benefit of parallelization based on the 20 instances run in section~\ref{subsec:ExpStrongDuality} with an increasing number of processes. To this end, we assign the scenario subproblems uniformly among all available processes and solve them synchronously. When evaluating the objective value of each solution obtained, we assign the corresponding second-stage formulations to any idle processes for acceleration. We test this parallel~\ref{problem:generalized_dual_decomposition} implementation with the number of processes increasing from \(1\) to \(5\) and each process consists of 10 threads. For each \(p \in [5]\), the speedup of using \(p\) processes is evaluated by the average wall clock time of solving the 20 random instances using 1 process divided by that of using \(p\) processes. We report the results in Table~\ref{tab:speedUp} and Fig.~\ref{fig:speedUp}. These results demonstrate that the speedup proportionally increases as the number of processes increases, providing a nearly linear speedup. This suggests the potential benefit of implementing~\ref{problem:generalized_dual_decomposition} to solve~\eqref{problem:primal} with a large number of scenarios.

\subsection{\ref{problem:generalized_dual_decomposition} for mixed-integer tender} \label{sec:GDD-mit-numerical}

We solve dynamic capacity acquisition and assignment (DCAP) problem instances in Section~\ref{subsubsec:DCAP} and stochastic multiple knapsack problem (SMKP) instances in Section~\ref{subsubsec:SMKP}. In both classes of instances, the entries of \(\bm{x}\) that appear in the second-stage formulation of~\eqref{problem:primal} include continuous variables.

\subsubsection{Results of solving DCAP instances} \label{subsubsec:DCAP}

We consider the DCAP model proposed by~\cite{ahmed2003dynamic} and in the SIPLIB~\citep{ahmed2015siplib}. DCAP is an instance of~\eqref{problem:primal}, with mixed-integer first-stage variables $(u_{it}, x_{it})$ determining whether and how much capacity to acquire for resource $i \in [m]$ in period $t \in [T]$, respectively, and second-stage variables \(y_{ij}\) assigning resource \(i\) to task \(j \in [n]\) after the uncertain demands \(d^s_{jt}\) for task \(j\) in period \(t\) are revealed in each scenario \(s \in [S]\). 
For completeness, we state the DCAP model as follows:
\begin{align}
     \min_{\bm{x}} \ & \ \sum_{t=1}^\top \sum_{i=1}^m (\alpha_{it} x_{it} + \beta_{it}u_{it}) + \sum_{s=1}^S \sum_{t=1}^\top p^s Q_t^s(x) \notag\\
     \text{s.t.} \ & \ 0 \leq x_{it} \leq u_{it} \quad \forall i \in [m],\; t \in [T] \tag{DCAP} \label{dcap}\\
                   & u_{it} \in \left\{0, 1\right\} \qquad \forall i \in [m],\; t \in [T], \notag
\end{align}
with
\begin{align*}
     Q_t^s(\bm{x}) \ := \ \min_{\bm{y}} \ & \ \sum_{i=1}^m \ \sum_{j=1}^n c_{ijt}^s y_{ij}\\
                \text{s.t.} \ & \ \sum_{j=1}^n d_{jt}^s y_{ij} \leq \sum_{\tau = 1}^t x_{i\tau} \quad \forall i \in [m] \\
                           & \ \sum_{i=1}^m y_{ij} = 1 \quad \forall j \in [n] \\
                           & \ y_{ij} \in \{0, 1\} \quad \forall i \in [m], j \in [n].
\end{align*}

\paragraph{Instance generation.}
We set \(m = 3\), \(n = 6\), \(T = 2\), and \(p^s = 1/S\) for all \(s \in [S]\). Then, we sample each $\alpha_{i,t}$ from $\text{Unif}[1, 6]$, $\beta_{it}$ from $\text{Unif}[6, 41]$, $c_{ijt}^s$ from $\text{Unif}[1, 6]$, $c_{0jt}^s$ from $\text{Unif}[30, 180]$, and $d_{jt}^s$ from $\text{Unif}[0.1, 0.5]$. 


\begin{figure}[htbp]
    \centering
    \begin{subfigure}{0.48\textwidth}
        \centering
        \includegraphics[width=\linewidth]{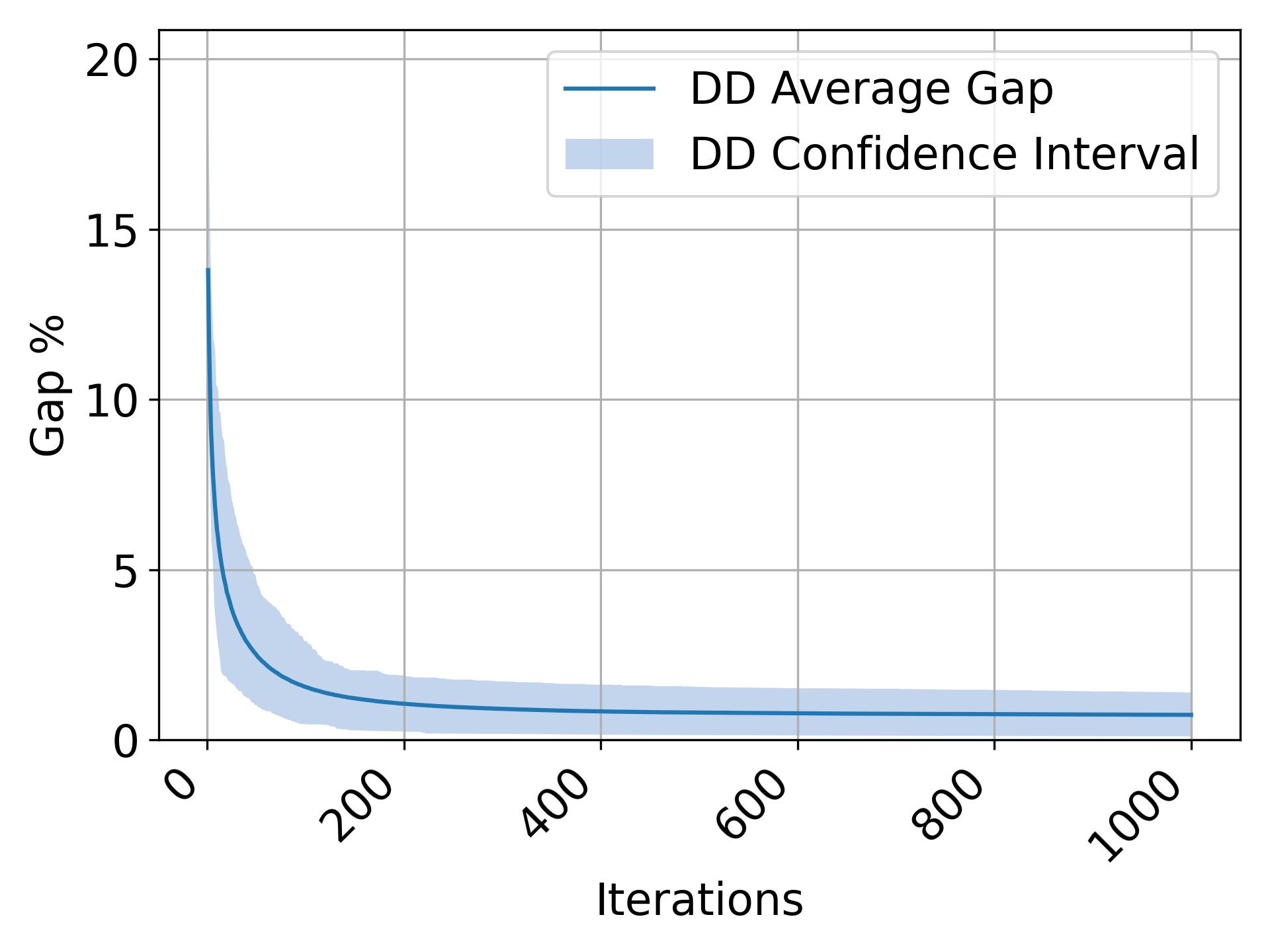}
        \caption{Average gap of~\ref{problem:dual_decomposition} versus iterations}
        \label{fig:ddConvergenceMIP}
    \end{subfigure}
    \hfill
    \begin{subfigure}{0.48\textwidth}
        \centering
        \includegraphics[width=\linewidth]{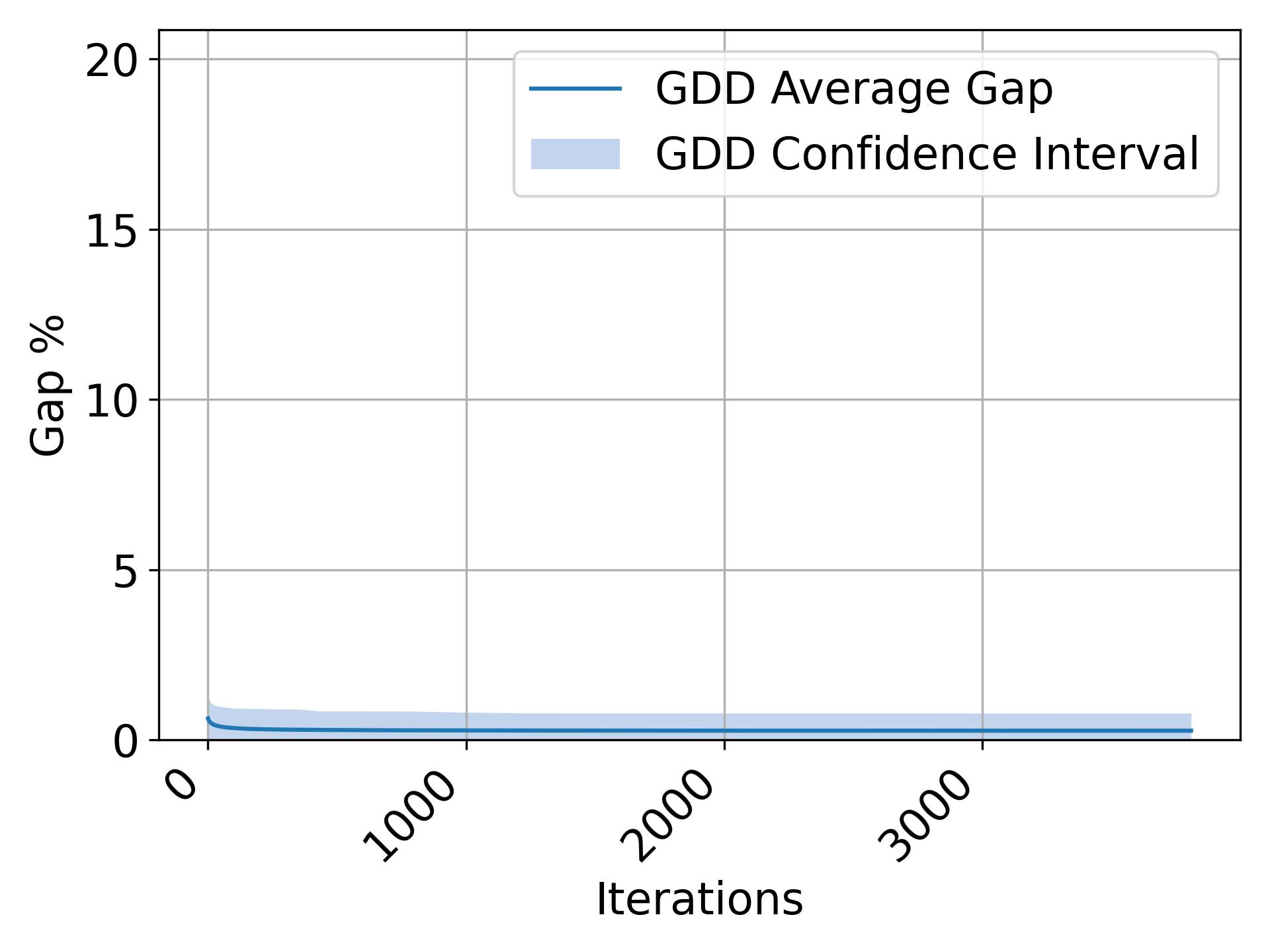}
        \caption{Average gap of~\ref{problem:generalized_dual_decomposition} versus iterations}
        \label{fig:gddConvergenceMIP}
    \end{subfigure}
    \caption{Comparison of the optimality gap proved by~\ref{problem:dual_decomposition} and~\ref{problem:generalized_dual_decomposition} in (DCAP) instances.}
    \label{fig:strongDualityMIPDCAP}
\end{figure}

\paragraph{Optimality gaps of~\ref{problem:dual_decomposition} and~\ref{problem:generalized_dual_decomposition}.}

First, we compare the optimality gap achieved by~\ref{problem:dual_decomposition} and~\ref{problem:generalized_dual_decomposition} on 46 randomly generated instances, each with \(S = 32\) scenarios, and report the results in Fig.~\ref{fig:strongDualityMIPDCAP}. 
From this figure, we observe that, while~\ref{problem:dual_decomposition} stalled at a positive optimality gap,~\ref{problem:generalized_dual_decomposition} was able to continue tightening the gap to near-zero. Similarly to the results in the (SM\(b\)K) instances, this empirically supports that the nonlinear regularization in~\ref{problem:generalized_dual_decomposition} is effective at closing the duality gap, even in these mixed-integer (DCAP) instances.

\begin{table}[!h]
    \centering
    \caption{Optimality gaps of~\ref{problem:generalized_dual_decomposition}, DEF, and the primal decomposition algorithm in (DCAP) instances.}
    \label{tab:GRB-VS-GDD-DCAP}
    \begin{tabular}{ccp{10mm}p{10mm}p{10mm}p{10mm}p{10mm}p{10mm}cc}
        \hline
        \rule{0pt}{2.8ex} & & \multicolumn{6}{c}{\ref{problem:generalized_dual_decomposition} with \# iterations of~\ref{problem:dual_decomposition} warm-start} & \multirow{2}{*}{DEF} & \multirow{2}{*}{Primal} \\
        \cline{3-8}
        \rule{0pt}{1.8ex} & & 100 & 200 & 300 & 400 & 500 & 700 & & \\
        \hline
        \multirow{4}{*}{Ending Gaps (\%)} & 25\% Quantile & 0.40 & 0.44 & 0.37 & 0.21 & 0.35 & 0.30 & 4.00 & 15.76\\
        & Median & 0.55 & 0.55 & 0.44 & 0.47 & 0.50 & 0.55 & 6.15 & 18.74\\
        & Average & 0.64 & 0.68 & 0.52 & 0.49 & 0.50 & 0.51 & 5.63 & 17.53\\
        & 75\% Quantile & 0.82 & 0.93 & 0.71 & 0.74 & 0.66 & 0.69 & 7.49 & 20.27\\
        \hline
        \multirow{4}{*}{DD Gaps (\%)} & 25\% Quantile & 0.90 & 0.88 & 0.73 & 0.51 & 0.63 & 0.46 & NA & NA\\
        & Median & 1.34 & 1.09 & 0.85 & 0.76 & 0.70 & 0.72 & NA & NA\\
        & Average & 1.40 & 1.10 & 0.87 & 0.78 & 0.73 & 0.68 & NA & NA\\
        & 75\% Quantile & 1.90 & 1.51 & 1.04 & 1.04 & 0.94 & 0.84 & NA & NA\\
        \hline
    \end{tabular}
\end{table}

\begin{figure}[htbp]
    \centering
    \includegraphics[width=0.55\textwidth]{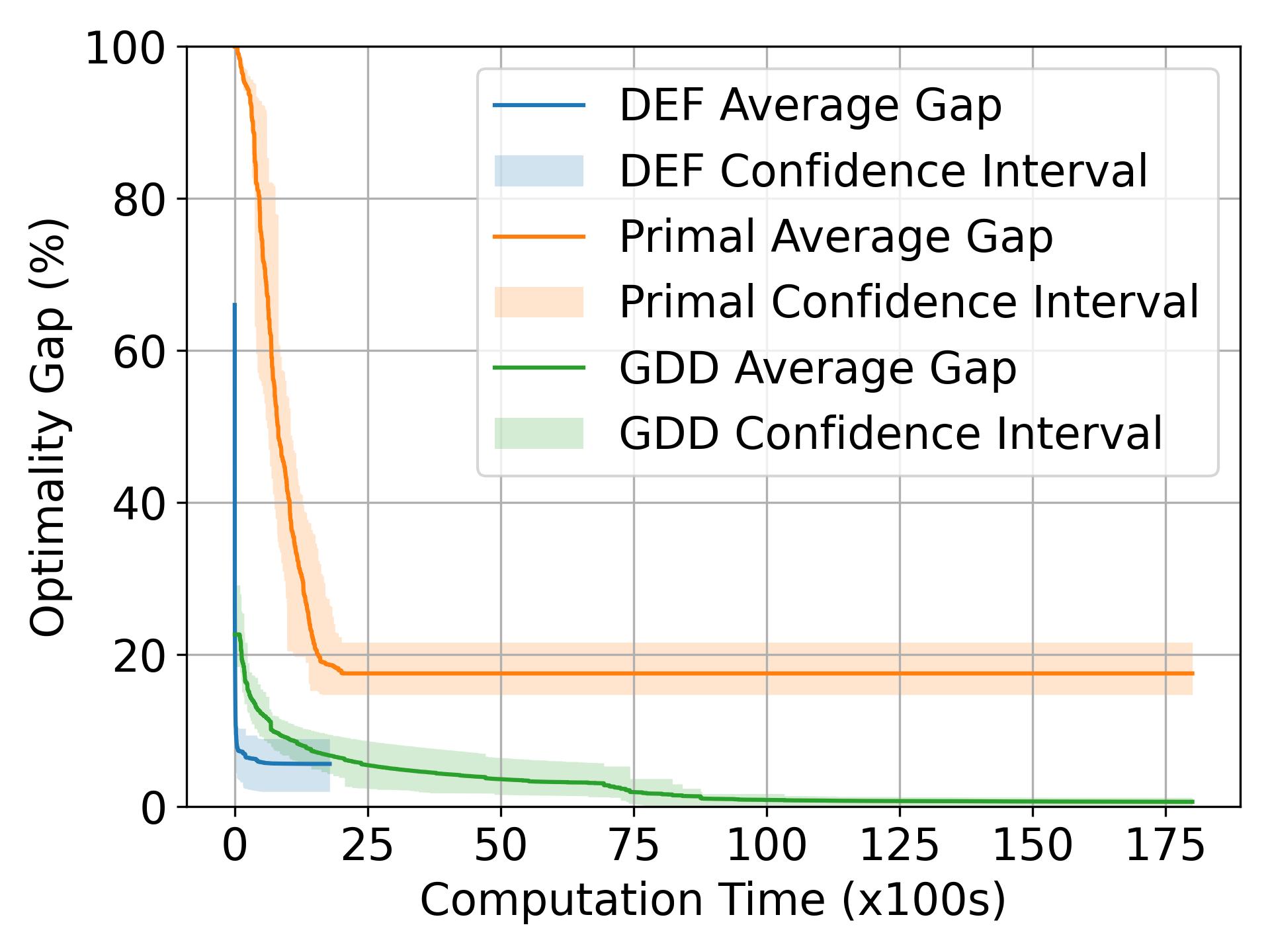} 
    \caption{Optimality gaps of~\ref{problem:generalized_dual_decomposition}, DEF, and the primal decomposition algorithm in (DCAP) instances.}
    \label{fig:GRB-Primal-GDD-Conv}
\end{figure}

\paragraph{Comparison with DEF and primal decomposition algorithms.}
Second, we compare the performance of~\ref{problem:generalized_dual_decomposition} with solving the DEF by Gurobi and a Benders-type primal decomposition algorithm. Here, DEF refers to replacing the value functions \(Q^s_t(\bm{x})\) with the second-stage formulation in the~\eqref{dcap} model. As for the primal decomposition algorithm, we implement the reverse norm cuts of~\cite{ahmed2022stochastic} and the strengthened Benders cuts of~\cite{zou2019stochastic}. In addition, we use~\ref{problem:dual_decomposition} to warm start~\ref{problem:generalized_dual_decomposition}. We summarize the results in Table~\ref{tab:GRB-VS-GDD-DCAP} and Figs.~\ref{fig:GRB-Primal-GDD-Conv},~\ref{fig:vanillaComparisonDCAP}. From Table~\ref{tab:GRB-VS-GDD-DCAP} and Fig.~\ref{fig:vanillaComparisonDCAP}, we observe that, across all warm-start settings, GDD achieved substantially smaller ending optimality gaps than both DEF and the primal decomposition algorithm. In particular, the average ending gap of GDD ranges from $0.49\%$ to $0.68\%$, whereas the corresponding average gap is $5.63\%$ by solving DEF and $17.53\%$ by applying the primal algorithm. Similar observations can be made from the 25\%, 50\%, and 75\% quantiles of the ending optimality gaps. From Fig.~\ref{fig:GRB-Primal-GDD-Conv}, we observe that Gurobi proved an optimality gap of around 5\% in 2,000 seconds before running out of memory, and the primal decomposition algorithm shrank the gap more slowly and got stuck at an optimality gap of more than 15\%. In contrast,~\ref{problem:generalized_dual_decomposition} proved a similar optimality gap as Gurobi did by 2,000 seconds. In addition, because~\ref{problem:generalized_dual_decomposition} used a much smaller memory than Gurobi did, it was able to continue improving the gap till near-optimum. This demonstrates the effectiveness of the proposed~\ref{problem:generalized_dual_decomposition} approach for solving~\eqref{problem:primal} with mixed-integer tender variables and a large number of scenarios.

\begin{figure}[htbp]
    \centering
    \begin{subfigure}[t]{0.49\textwidth}
        \centering
        \includegraphics[width=\linewidth]{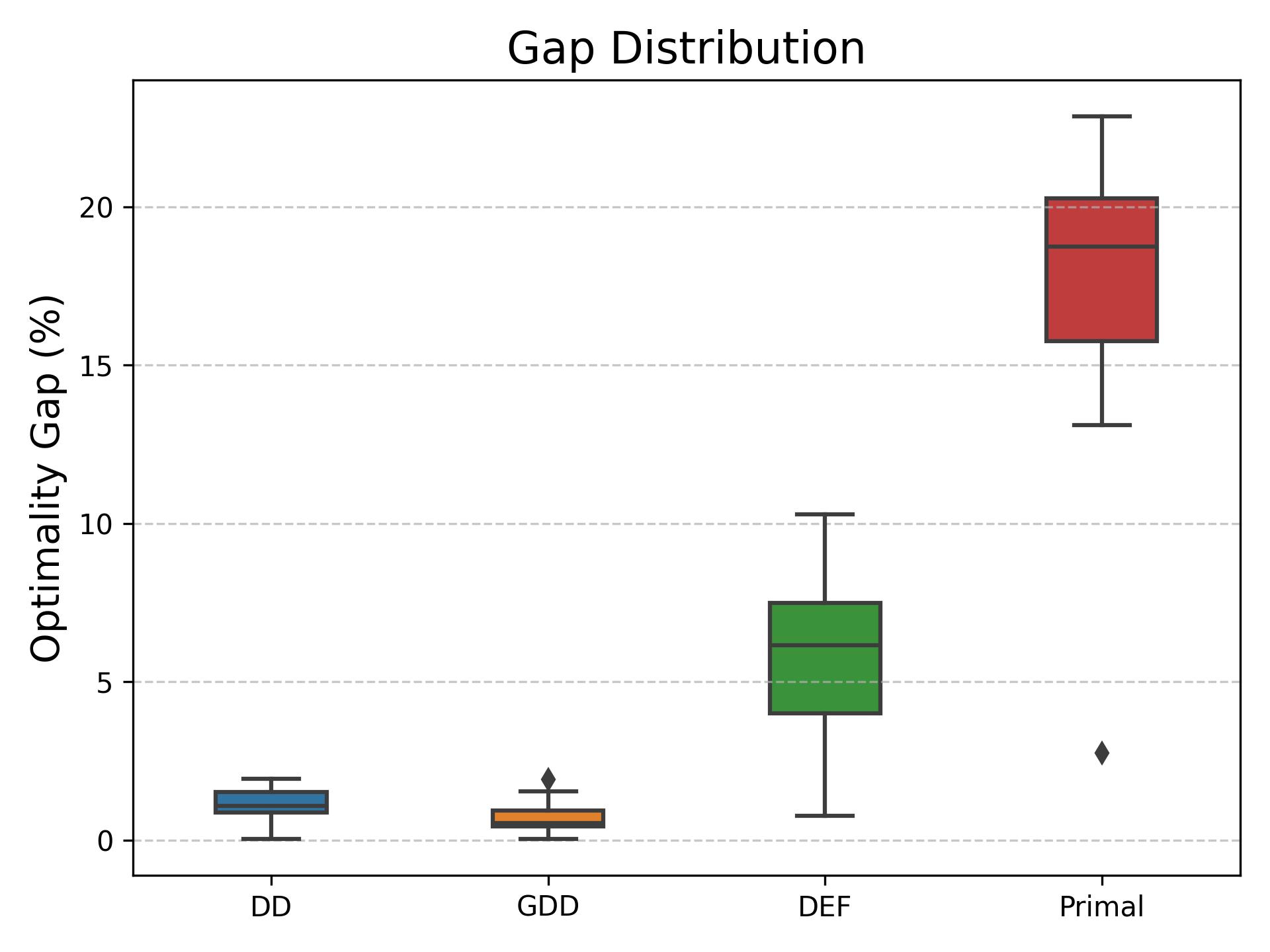}
        \caption{Gaps of~\ref{problem:generalized_dual_decomposition}, DEF, and primal algorithm}
        \label{fig:vanillaComparisonDCAP}
    \end{subfigure}
    \hfill
    \begin{subfigure}[t]{0.49\textwidth}
        \centering
        \includegraphics[width=\linewidth]{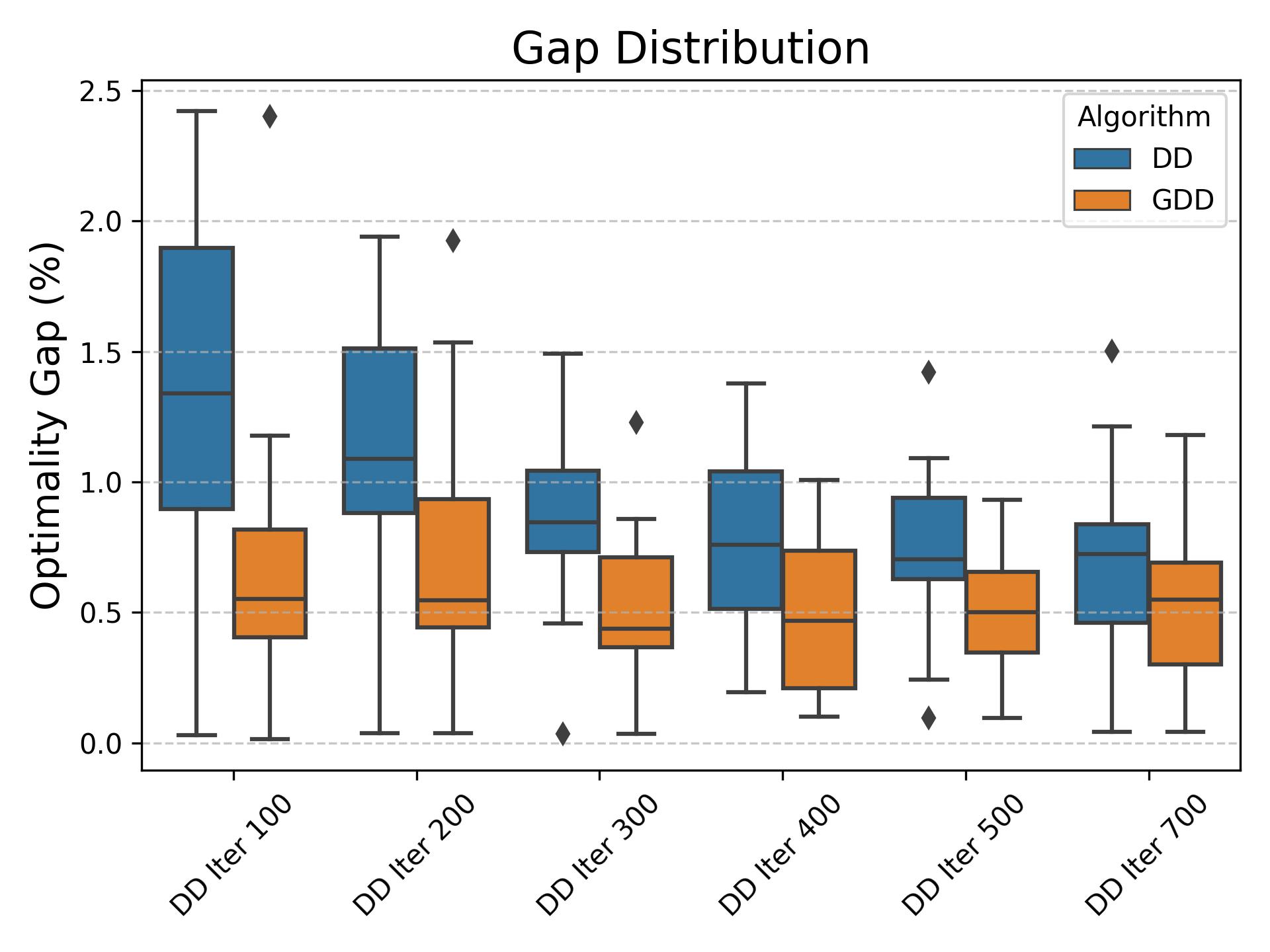}
        \caption{Gaps of~\ref{problem:generalized_dual_decomposition} with varying~\ref{problem:dual_decomposition} warm-start}
        \label{fig:DDVsGDD-only-DCAP}
    \end{subfigure}
    \caption{Comparison of the optimality gaps of~\ref{problem:generalized_dual_decomposition}, DEF, and the primal decomposition algorithm in (DCAP) instances.}
    \label{fig:DDVsGDD-Box}
\end{figure}

In addition, we compare the performance of~\ref{problem:generalized_dual_decomposition} under varying levels of~\ref{problem:dual_decomposition} warm-start and report the findings in Table~\ref{tab:GRB-VS-GDD-DCAP} and Fig.~\ref{fig:DDVsGDD-only-DCAP}. We observe that increasing the number of DD warm-start iterations produced an improved optimality gap at the end of the warm-start (or at the beginning of~\ref{problem:generalized_dual_decomposition}), and this improvement slowed down as we kept increasing the warm-start iteration (see the blue boxes in Fig.~\ref{fig:DDVsGDD-only-DCAP}). This is not surprising because (i)~\ref{problem:dual_decomposition} does not admit strong duality and (ii)~\ref{problem:dual_decomposition} updates Lagrangian multipliers through a subgradient algorithm, which was reported to have slow convergence empirically~\citep[see, e.g.,][]{yang2025globally}. Nevertheless, these different warm-start settings produced similar ending optimality gaps (see Table~\ref{tab:GRB-VS-GDD-DCAP} and the orange boxes in Fig.~\ref{fig:DDVsGDD-only-DCAP}). This suggests that~\ref{problem:generalized_dual_decomposition} is less sensitive to the~\ref{problem:dual_decomposition} warm-start and is able to robustly close the optimality gap.

\begin{figure}[htbp]
    \centering
    \begin{subfigure}[t]{0.49\textwidth}
        \centering
        \includegraphics[width=\linewidth]{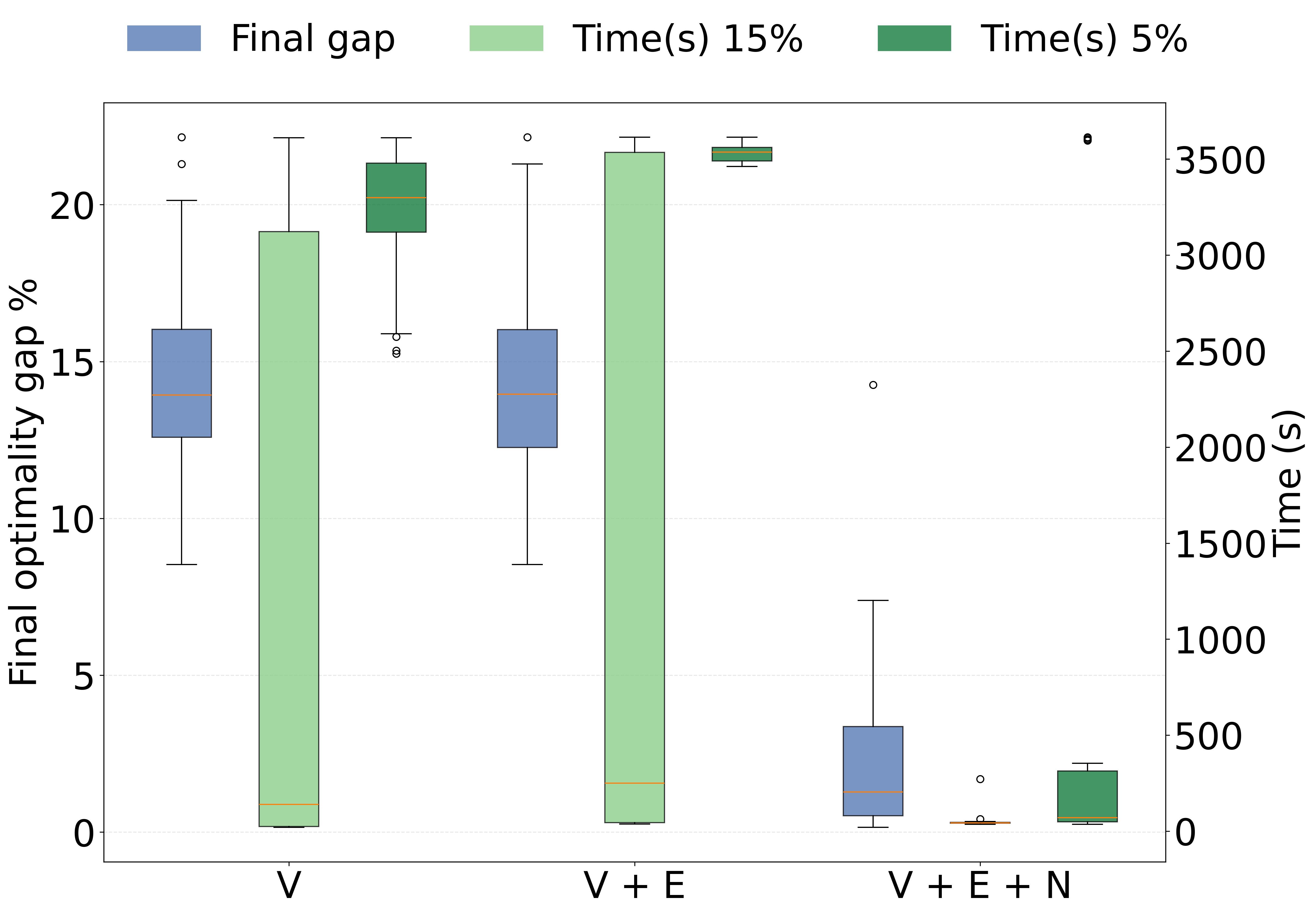}
        \caption{Comparison among improvement strategies.} 
        \label{fig:vanillaComparison}
    \end{subfigure}
    \hfill
    \begin{subfigure}[t]{0.49\textwidth}
        \centering
        \includegraphics[width=\linewidth]{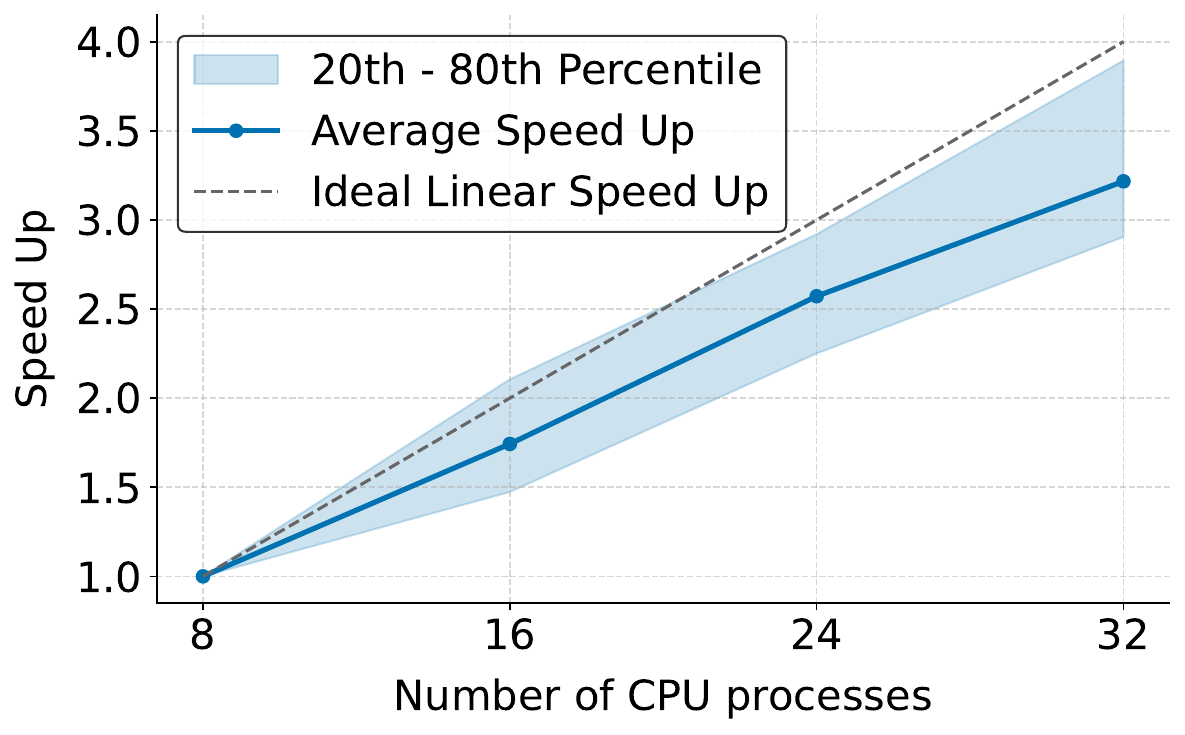}
        \caption{Speedup versus \# of CPU processes.} 
        \label{fig:speedup_processes}
    \end{subfigure}
    \caption{Acceleration of~\ref{problem:generalized_dual_decomposition} through algorithmic improvement strategies and parallel computing in (DCAP) instances. V = Vanilla, E = Enumeration, and N = Nested Voronoi.}
    \label{fig:accelerationComparison}
\end{figure}

\paragraph{Acceleration of~\ref{problem:generalized_dual_decomposition} through algorithmic strategies and parallel computing.} Finally, we demonstrate the acceleration of~\ref{problem:generalized_dual_decomposition} through the algorithmic improvement strategies in Section~\ref{subsec:improvement} and through parallel computing. We report the results in Fig.~\ref{fig:accelerationComparison}.

To demonstrate the algorithmic improvement strategies, we compare vanilla GDD (V) with its two variants: GDD with partition enumeration (V+E) and GDD with both partition enumeration and nested Voronoi refinement (V+E+N). We test these three variants without DD warm-start on randomly generated DCAP instances with 32 scenarios under the same parameter ranges as above, using a time limit of 1 hour for each run. From Fig.~\ref{fig:vanillaComparison}, we observe that both enumeration (E) and nested Voronoi (N) strategies improved upon the vanilla implementation, and the combination of (E) and (N) yields the best performance overall. For example, this combination significantly reduced the ending optimality gap, as well as the time~\ref{problem:generalized_dual_decomposition} took to reduce the optimality gap to be within 15\% and 5\%. These results indicate that exploiting the piecewise structure of the regularizers can improve the practical efficacy of~\ref{problem:generalized_dual_decomposition}.

To demonstrate the benefit of parallel computing, we generate another 50 random DCAP instances with the same parameter settings as above. We vary the number of CPU processes from 8 to 32 and impose a time limit of 1 hour for each run. From Fig.~\ref{fig:speedup_processes}, we observe that parallel computing yields a clear and consistent reduction in computation time as the number of processes increases. Although the observed speedup is sublinear, which is expected because of synchronization overhead and instance-dependent load imbalance across scenario subproblems, the gains remain substantial across the test instances.

\subsubsection{Results of solving SMKP instances} \label{subsubsec:SMKP}

We revise the (SM\(b\)K) model to be SMKP with mixed-integer tender by allowing the second half of the \(\bm{x}\) entries to take values continuously in the interval \([0,1]\).

\paragraph{Instance Generation.} We follow the same parameter settings in Section~\ref{sec:GDDForBinaryNumerical} to generate random SMKP instances.

\begin{figure}[htbp]
    \centering
    \begin{subfigure}{0.48\textwidth}
        \centering
        \includegraphics[width=\linewidth]{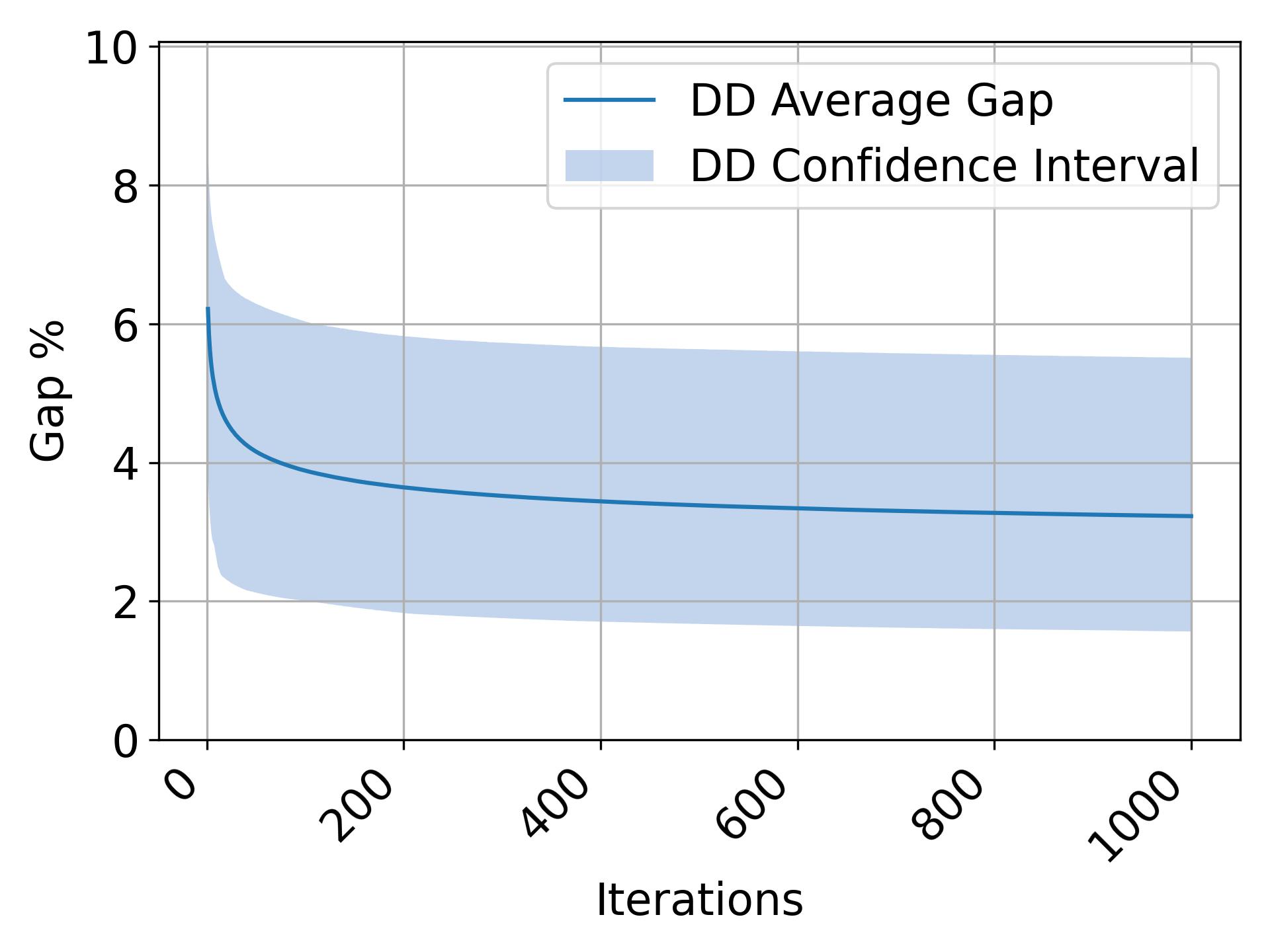}
        \caption{Average Gap of DD versus iterations}
        \label{fig:ddConvergenceMIP}
    \end{subfigure}
    \hfill
    \begin{subfigure}{0.48\textwidth}
        \centering
        \includegraphics[width=\linewidth]{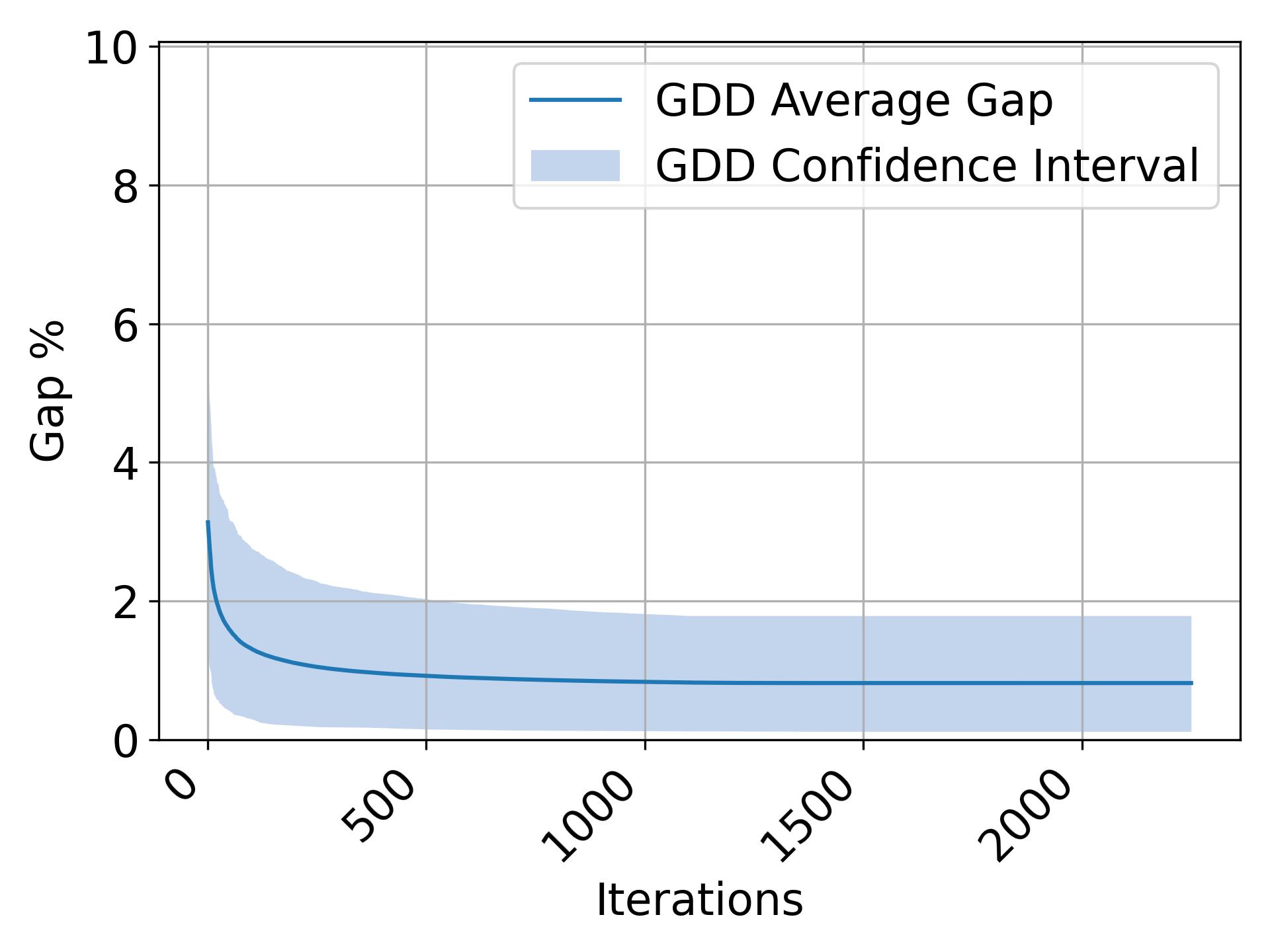}
        \caption{Average Gap of GDD versus iterations}
        \label{fig:gddConvergenceMIP}
    \end{subfigure}
    \caption{Comparison of the optimality gaps proved by DD and GDD in (SMKP) instances.}
    \label{fig:strongDualityMIPSMKP}
\end{figure}

\paragraph{Optimality gaps of~\ref{problem:dual_decomposition} and~\ref{problem:generalized_dual_decomposition}.} First, we compare the optimality gap achieved by~\ref{problem:dual_decomposition} and~\ref{problem:generalized_dual_decomposition} on 50 randomly generated instances, each with \(n=8\) mixed-integer first-stage variables and \(S=10\) scenarios, and report the results in Fig.~\ref{fig:strongDualityMIPSMKP}. From this figure, we observe that, while~\ref{problem:dual_decomposition} stalled at a positive optimality gap of more than 2\%,~\ref{problem:generalized_dual_decomposition} continued tightening the gap to be around 1\%. This is consistent with the earlier observations in (SM\(b\)K) and (DCAP) instances, and empirically supports that the effectiveness of~\ref{problem:generalized_dual_decomposition} in closing the duality gap.

\begin{figure}[htbp]
    \centering
    \includegraphics[width=0.55\linewidth]{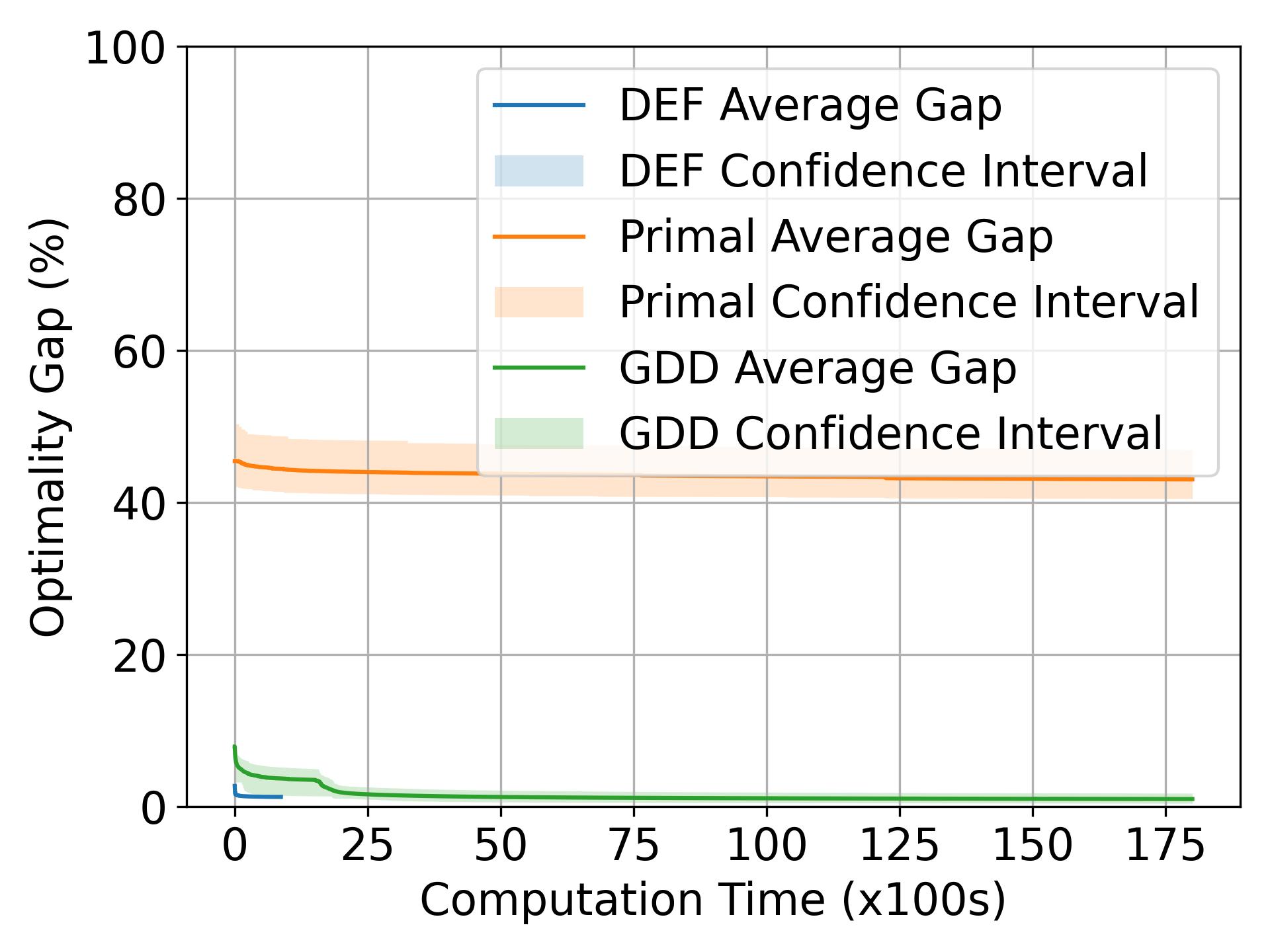}
    \caption{Optimality gaps of~\ref{problem:generalized_dual_decomposition}, DEF, and the primal decomposition algorithm in (SMKP) instances.}
    \label{fig:GRB-Primal-GDD-Conv-SMKP}
\end{figure}

\begin{table}[!h]
    \centering
    \caption{Optimality gaps of~\ref{problem:generalized_dual_decomposition}, DEF, and the primal decomposition algorithm in (SMKP) instances.}
    \label{tab:spComparisonWithGRBSMKP}
    \begin{tabular}{ccw{c}{1.3cm}w{c}{1.3cm}w{c}{1.3cm}w{c}{1.3cm}cc}
        \hline
        \rule{0pt}{2.8ex} & & \multicolumn{4}{c}{\ref{problem:generalized_dual_decomposition} with \# iterations of~\ref{problem:dual_decomposition} warm-start} & \multirow{2}{*}{DEF} & \multirow{2}{*}{Primal} \\
        \cline{3-6}
        \rule{0pt}{1.8ex} & & 0 & 500 & 1000 & 2000 & & \\
        \hline
        \multirow{4}{*}{Ending Gaps (\%)} & 25\% Quantile & 0.47 & 0.43 & 0.44 & 0.51 & 1.20 & 40.68\\
        & Median & 0.81 & 0.70 & 0.72 & 0.75 & 1.28 & 42.94\\
        & Average & 0.82 & 0.73 & 0.74 & 0.81 & 1.29 & 43.06\\
        & 75\% Quantile & 1.09 & 0.92 & 0.98 & 1.03 & 1.38 & 44.49\\
        \hline
        \multirow{4}{*}{DD Gaps (\%)} & 25\% Quantile & 3.91 & 2.76 & 2.85 & 2.44 & NA & NA\\
        & Median & 4.96 & 3.19 & 3.47 & 3.48 & NA & NA\\
        & Average & 4.92 & 3.51 & 3.55 & 3.49 & NA & NA\\
        & 75\% Quantile & 5.51 & 4.29 & 4.06 & 3.85 & NA & NA\\
        \hline
    \end{tabular}
\end{table}

\begin{figure}[htbp]
    \centering
    \begin{subfigure}[t]{0.49\textwidth}
        \centering
        \includegraphics[width=\linewidth]{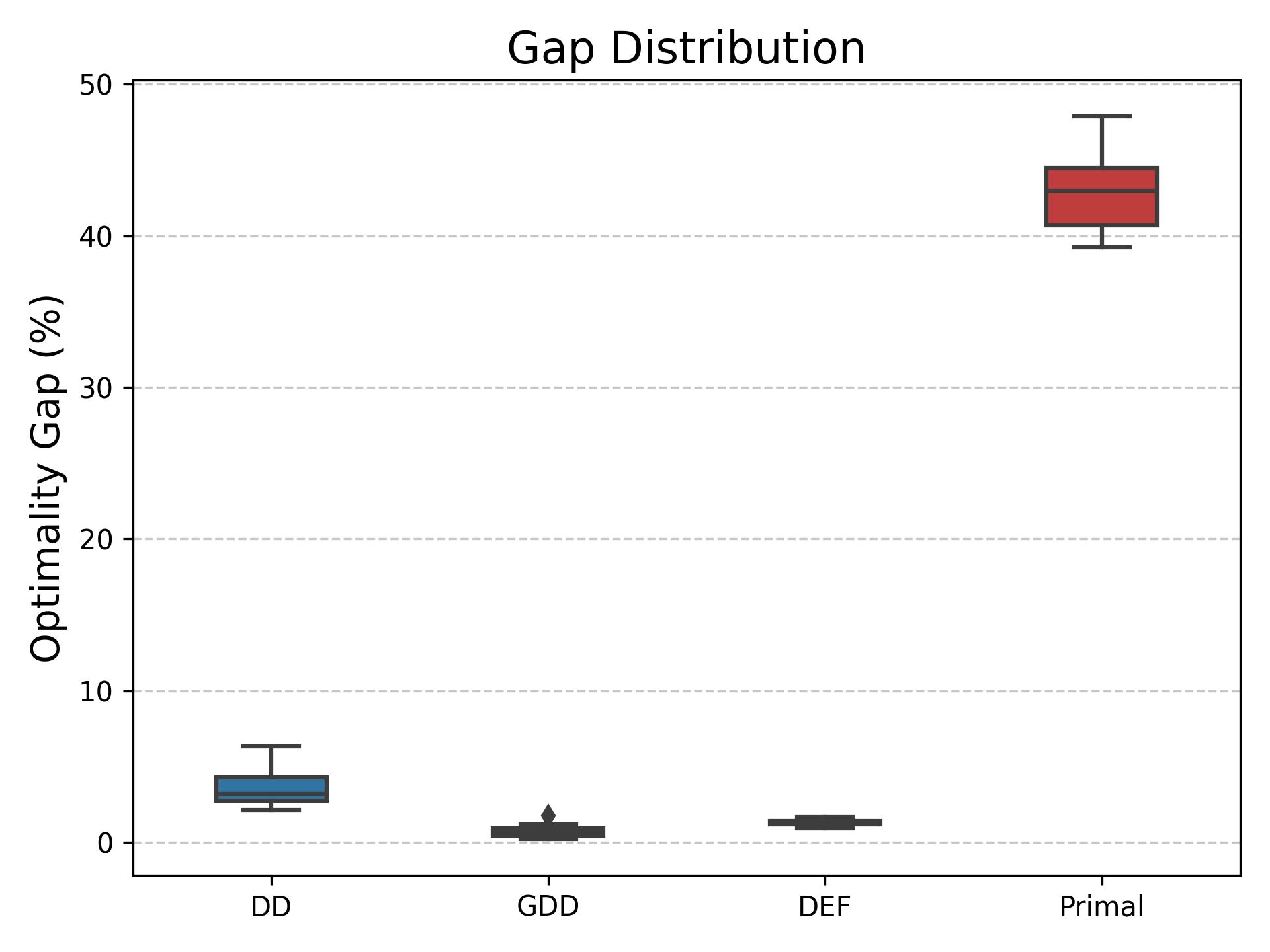}
        \caption{Gaps of~\ref{problem:generalized_dual_decomposition}, DEF, and primal algorithm}
        \label{fig:GDD-DEF-Primal-SMKP}
    \end{subfigure}
    \hfill
    \begin{subfigure}[t]{0.49\textwidth}
        \centering
        \includegraphics[width=\linewidth]{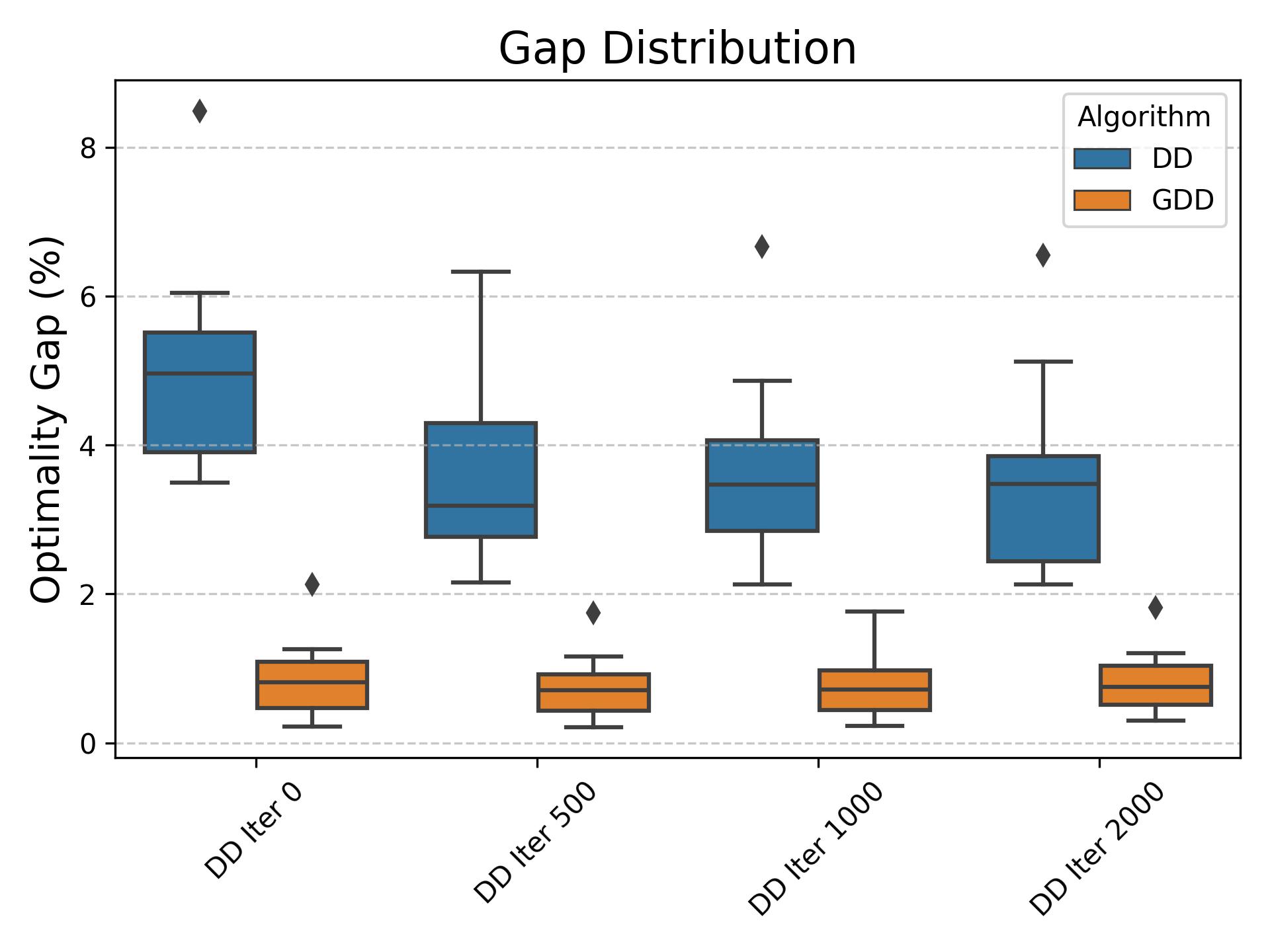}
        \caption{Gaps of~\ref{problem:generalized_dual_decomposition} with varying~\ref{problem:dual_decomposition} warm-start}
        \label{fig:DDVsGDD-only-SMKP}
    \end{subfigure}
    \caption{Comparison of the optimality gaps of~\ref{problem:generalized_dual_decomposition}, DEF, and the primal decomposition algorithm in (SMKP) instances.}
    \label{fig:DDVsGDD-Box-SMKP}
\end{figure}

\paragraph{Comparison with DEF and primal decomposition algorithms.}
Second, we compare the performance of~\ref{problem:generalized_dual_decomposition} with solving the DEF with Gurobi and the Benders-type primal decomposition algorithm as in Section~\ref{subsubsec:DCAP}. We solved 20 random SMKP instances with a time limit of 5 hours for each run. For GDD, we tested four DD warm-start settings, with 0, 500, 1,000, and 2,000 iterations, respectively. We summarize the results in Table~\ref{tab:spComparisonWithGRBSMKP} and Figs.~\ref{fig:GRB-Primal-GDD-Conv-SMKP},~\ref{fig:GDD-DEF-Primal-SMKP}. From Table~\ref{tab:spComparisonWithGRBSMKP} and Fig.~\ref{fig:GDD-DEF-Primal-SMKP}, we observe that~\ref{problem:generalized_dual_decomposition} achieved substantially smaller ending optimality gaps than both DEF and the primal decomposition algorithm. For example, the average optimality gap of GDD ranges from $0.73\%$ to $0.82\%$, while Gurobi achieved an average optimality gap of $1.29\%$ by solving DEF and the primal algorithm achieved an average optimality gap of $43.06\%$. Similar observations can be made from the 25\%, 50\%, and 75\% quantiles of the ending optimality gaps. In addition, from Table~\ref{tab:spComparisonWithGRBSMKP} and Fig.~\ref{fig:GRB-Primal-GDD-Conv-SMKP}, we observe that Gurobi proved an optimality gap of around 1.3\% in 1,000 seconds before running out of memory, and the primal decomposition algorithm shrank the gap more slowly and got stuck at an optimality gap of more than 40\%. In contrast,~\ref{problem:generalized_dual_decomposition} used a much smaller memory than Gurobi did and it was able to continue improving the gap till near-zero. This confirmed the earlier observations in the (DCAP) instances and the effectiveness of~\ref{problem:generalized_dual_decomposition} for solving~\eqref{problem:primal} with mixed-integer tender variables and a large number of scenarios.

In addition, we compare the performance of~\ref{problem:generalized_dual_decomposition} under varying levels of~\ref{problem:dual_decomposition} warm-start and report the findings in Table~\ref{tab:spComparisonWithGRBSMKP} and Fig.~\ref{fig:DDVsGDD-only-SMKP}. We observe that, as in the (DCAP) instances, increasing the number of~\ref{problem:dual_decomposition} warm-start iterations improved the beginning optimality gap of the~\ref{problem:generalized_dual_decomposition} but produced similar ending optimality gaps (see Table~\ref{tab:spComparisonWithGRBSMKP} and the orange boxes in Fig.~\ref{fig:DDVsGDD-only-SMKP}). This confirms our earlier observations that~\ref{problem:generalized_dual_decomposition} is able to robustly close the remaining optimality gap after applying~\ref{problem:dual_decomposition}.

\begin{figure}[htbp]
    \centering
    \begin{subfigure}[t]{0.49\textwidth}
        \centering
        \includegraphics[width=\linewidth]{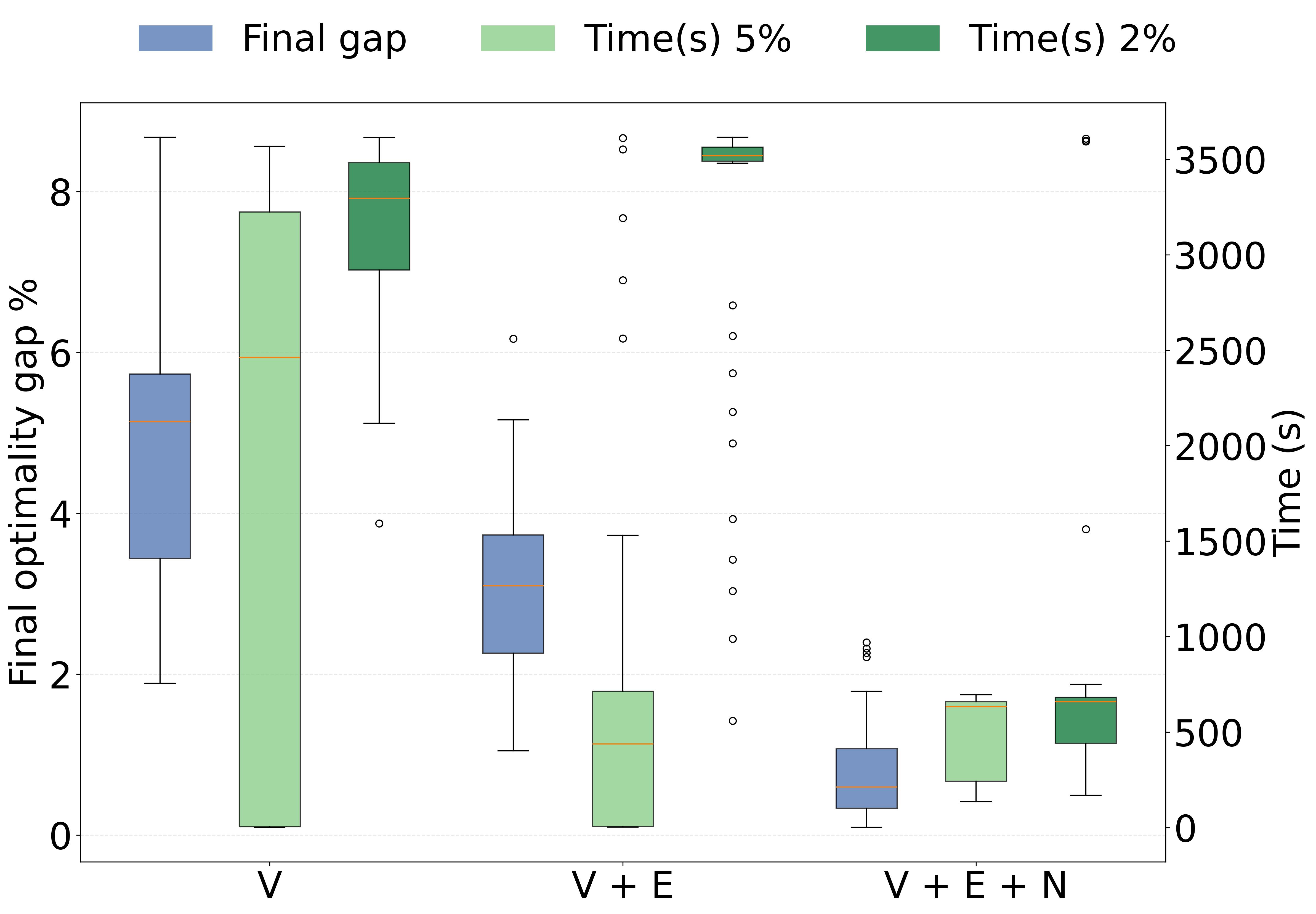}
        \caption{Comparison among improvement strategies.}
        \label{fig:vanillaComparisonSMKP}
    \end{subfigure}
    \hfill
    \begin{subfigure}[t]{0.49\textwidth}
        \centering
        \includegraphics[width=\linewidth]{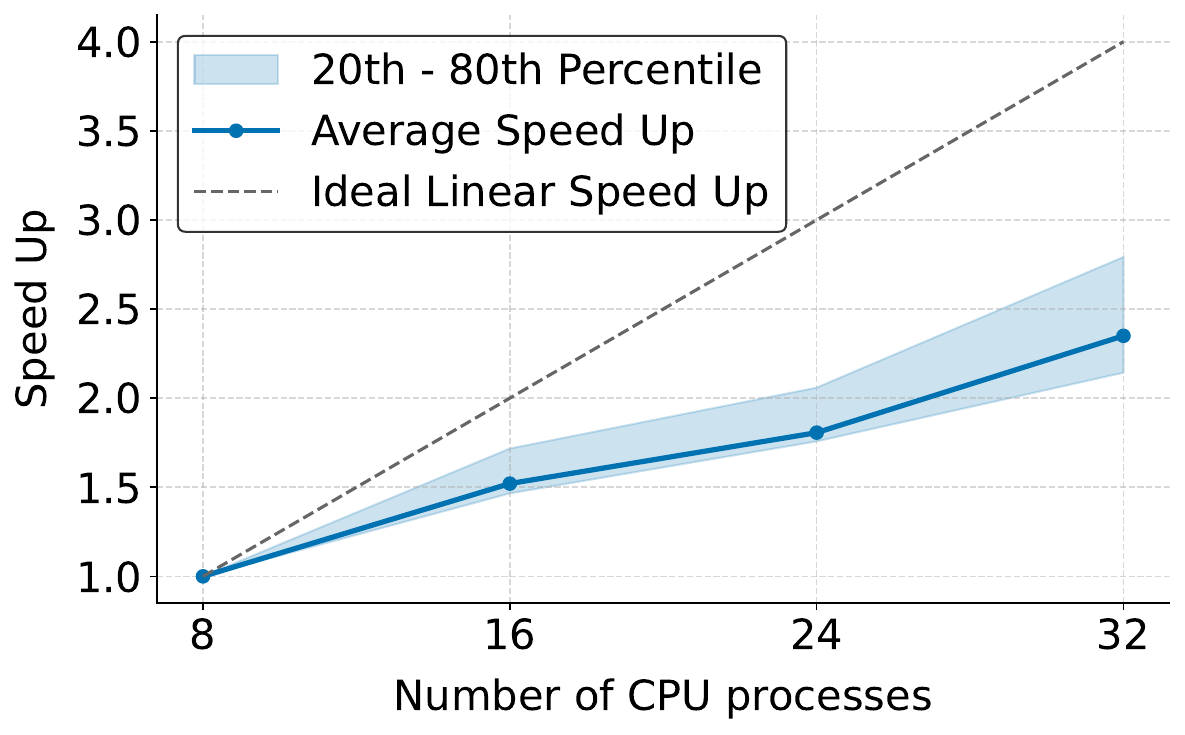}
        \caption{Speedup versus \# of CPU processes.}
        \label{fig:speedup_processesSMKP}
    \end{subfigure}
    \caption{Acceleration of~\ref{problem:generalized_dual_decomposition} through algorithmic improvement strategies and parallel computing in (SMKP) instances. V = Vanilla, E = Enumeration, and N = Nested Voronoi.}
    \label{fig:accelerationComparisonSMKP}
\end{figure}

\paragraph{Acceleration of~\ref{problem:generalized_dual_decomposition} through algorithmic strategies and parallel computing.}
Finally, we demonstrate the acceleration of~\ref{problem:generalized_dual_decomposition} through the algorithmic improvement strategies in Section~\ref{subsec:improvement} and through parallel computing. We report the results in Fig.~\ref{fig:accelerationComparisonSMKP}.

To demonstrate the algorithmic improvement strategies, we compare vanilla GDD (V) with its two variants: GDD with partition enumeration (V+E) and GDD with both partition enumeration and nested Voronoi refinement (V+E+N). We test these three variants without DD warm-start on randomly generated SMKP instances with \(n=8\) and \(S=10\) scenarios under the same parameter ranges as above. From Fig.~\ref{fig:vanillaComparisonSMKP}, we observe that both enumeration (E) and nested Voronoi (N) strategies improved upon the vanilla implementation, and the combination of (E) and (N) yields the best performance overall. For example, this combination significantly reduced the ending optimality gap, as well as the time~\ref{problem:generalized_dual_decomposition} took to reduce the optimality gap to be within 5\% and 2\%. These results confirm the earlier observations in the (DCAP) instances that exploiting the piecewise structure of the regularizers can improve the practical efficacy of~\ref{problem:generalized_dual_decomposition}.

To demonstrate the benefit of parallel computing, we generate another 50 random SMKP instances with \(n=8\), \(S = 32\) scenarios, and a termination criterion of reaching either a 1-hour time limit or an optimality gap of less than 1.5\%. Fig.~\ref{fig:speedup_processesSMKP} reports the speedup (average and the 20th-80th confidence interval) as we increase the number of CPU processes from 8 to 32. We observe that, as in the (DCAP) instances, parallel computing yields a clear and consistent reduction in computation time as the number of processes increases, although the speedup is sublinear in these (SMKP) instances. The deviation from the ideal linear speedup was attributed to a subset of challenging instances, for which~\ref{problem:generalized_dual_decomposition} was not able to achieve the target 1.5\% optimality gap within the 1-hour time limit, regardless of how many CPU processes were assigned.

\section{Conclusion} \label{sec:conclude}
This paper proposed a generalized dual decomposition approach for solving two-stage stochastic programs with mixed-integer decision variables in both stages. This approach admits strong duality by generalizing the linear regularizers in dual decomposition to nonlinear ones. 
Besides, we proposed an algorithmic framework to approximate optimal regularizers with cutting planes and derived sufficient conditions on these cuts for the convergence of the framework to achieve global optimality. In addition, we found that several existing and popular cutting planes satisfy these conditions, making them applicable with the proposed GDD framework. Finally, we conducted extensive numerical experiments to demonstrate the performance of GDD in stochastic programs with both binary and mixed-integer tenders. From the experimental results, we observed that GDD was able to effectively close the optimality gap and compare favorably with the standard DD, solving the DEF with Gurobi, and a primal decomposition algorithm. Finally, the speedup of GDD through parallel computing suggests future work to design other efficient implementations of the GDD framework (e.g., using asynchronous algorithms, other cutting planes, or other parallel computing mechanisms).

%
%
%
\bibliographystyle{informs2014}
\bibliography{refs}
\newpage

\begin{APPENDICES}

\section{Proofs}
\subsection{Proof of Theorem~\ref{prop:strong_duality}} \label{app-prop:strong_duality}
\proof{Proof.}
    \begin{enumerate}
        \item We first show the optimal value of (\ref{problem:primal}) is not less than that of (\ref{problem:generalized_dual_decomposition}). Let $\{g_i\}_{i \in [N]}$ be any feasible solution of problem (\ref{problem:generalized_dual_decomposition}), then
        \begin{align*}
             & \sum_{i=1}^N \min_{\bm{x}^i \in X} \left(f_i(\bm{x}^i) + g_i(\bm{x}^i)\right)\\
            &\leq \min_{\bm{x} \in X} \sum_{i = 1}^N \left(f_i(\bm{x}) + g_i(\bm{x})\right)\\
            &=\min_{\bm{x} \in X}\left(\sum_{i=1}^N f_i(\bm{x}) + \sum_{i=1}^N g_i(\bm{x})\right)\\
            &= \min_{\bm{x} \in X}\sum_{i=1}^N f_i(\bm{x}) = (\ref{problem:primal}).
        \end{align*}
        By taking maximum over $g_i, i \in [N]$, we proved that $v_{\text{GDD}} \leq v^*.$
        \item Then we show the strong duality by proving that the optimal value of (\ref{problem:primal}) is not greater than that of (\ref{problem:generalized_dual_decomposition}). Define $g_i'(\bm{x}):=\frac{\sum_{i'=1}^N f_{i'}(\bm{x})}{N} - f_i(\bm{x}), \forall i \in [N]$. Then we have 
        \begin{align*}
            \sum_{i=1}^N g_i'(\bm{x}) = \sum_{i=1}^N \left(\frac{\sum_{i'=1}^N f_{i'}(\bm{x})}{N} - f_i(\bm{x})\right) = \sum_{i'=1}^N f_{i'}(\bm{x}) - \sum_{i=1}^N f_i(\bm{x}) = 0.
        \end{align*}
        Thus, $\{g_i'\}_{i\in [N]}$ is feasible for (\ref{problem:generalized_dual_decomposition}), and it follows that
        \begin{align*}
            v_{\text{GDD}} &\geq \sum_{i=1}^N \min_{\bm{x}^i \in X} \left(f_i(\bm{x}^i) + g'_i(\bm{x}^i)\right)\\
            &=\sum_{i=1}^N \min_{\bm{x}^i \in X} \frac{\sum_{i' = 1}^N f_{i'}(\bm{x}^i)}{N}\\
            &= \min_{\bm{x} \in X} \sum_{i = 1}^N f_{i}(\bm{x}) = v^*
        \end{align*}
    \end{enumerate}
    Therefore, $v_{\text{GDD}} = v^*$.
\endproof

\subsection{Proof of Proposition \ref{prop:GDDDual}} \label{App:GDDDual}
\proof{Proof. }
We prove the statement by taking the dual of the $\min_{x \in \mathcal{X}}\; \overline{co}(\sum_{i \in [N]} f_i)(\bm{x})$.
\[
\begin{aligned}
    &\min_{x \in \mathcal{X}}\; \overline{co}(\sum_{i \in [N]} f_i)(\bm{x})\\
    =& \begin{aligned}[t]
        \min_{\rho(\cdot) \geq 0}& \; \int_{\bm{x} \in \mathcal{X}} \rho(\bm{x}) \sum_{i \in [N]}f_i(\bm{x})d\bm{x}\\
        \text{s.t.}&\; \int_{\bm{x}} \rho(\bm{x}) d\bm{x} = 1
    \end{aligned}\\
    =& \begin{aligned}[t]
        \min_{\rho(\cdot) \geq 0}& \; \sum_{i \in [N]}\int_{\bm{x} \in \mathcal{X}} \rho(\bm{x}) f_i(\bm{x})d\bm{x}\\
        \text{s.t.}&\; \int_{\bm{x}} \rho(\bm{x}) d\bm{x} = 1
    \end{aligned}\\
    =& \begin{aligned}[t]
        \min_{\rho(\cdot) \geq 0, \rho_i(\cdot) \geq 0, i \in [N]}& \; \sum_{i \in [N]}\int_{\bm{x} \in \mathcal{X}} \rho_i(\bm{x}) f_i(\bm{x})d\bm{x}\\
        \text{s.t.}&\; \int_{\bm{x}} \rho_i(\bm{x}) d\bm{x} = 1, \; \forall i \in [N]\\
        &\; \rho_i(\bm{x}) = \rho(\bm{x}), \quad \text{a.s.}, \; \forall i \in [N]
    \end{aligned}\\
    =& \begin{aligned}[t]
        \max_{g_i(\cdot), \i \in [N]}\min_{\rho(\cdot) \geq 0, \rho_i(\cdot) \geq 0, i \in [N]}& \; \sum_{i \in [N]}\int_{\bm{x} \in \mathcal{X}} \rho_i(\bm{x}) f_i(\bm{x})d\bm{x} + \sum_{i \in [N]} \int_{\bm{x} \in \mathcal{X}} g_i(\bm{x}) (\rho_i(\bm{x}) - \rho(\bm{x})) d\bm{x}\\
        \text{s.t.}&\; \int_{\bm{x}} \rho_i(\bm{x}) d\bm{x} = 1, \; \forall  i \in [N]
    \end{aligned}\\
    =& \begin{aligned}[t]
        \max_{g_i(\cdot), \i \in [N]}\min_{\rho(\cdot) \geq 0, \rho_i(\cdot) \geq 0, i \in [N]}& \; \sum_{i \in [N]}\int_{\bm{x} \in \mathcal{X}} \rho_i(\bm{x}) (f_i(\bm{x}) + g_i(\bm{x}))d\bm{x} - \sum_{i \in [N]} \int_{\bm{x} \in \mathcal{X}} g_i(\bm{x}) \rho(\bm{x}))d\bm{x}\\
        \text{s.t.}&\; \int_{\bm{x}} \rho_i(\bm{x}) d\bm{x} = 1, \; \forall  i \in [N]
    \end{aligned}\\
    =& \begin{aligned}[t]
        \max_{\substack{g_i(\cdot), \i \in [N]\\ \sum_{i \in [N]} g_i(\bm{x}) = 0,\; \text{a.s.}}}\min_{\rho(\cdot) \geq 0, \rho_i(\cdot) \geq 0, i \in [N]}& \; \sum_{i \in [N]}\int_{\bm{x} \in \mathcal{X}} \rho_i(\bm{x}) (f_i(\bm{x}) + g_i(\bm{x}))d\bm{x}\\
        \text{s.t.}&\; \int_{\bm{x}} \rho_i(\bm{x}) d\bm{x} = 1, \; \forall  i \in [N]
    \end{aligned}\\
    =& \begin{aligned}[t]
        \max_{\substack{g_i(\cdot), \i \in [N]\\ \sum_{i \in [N]} g_i(\bm{x}) = 0,\; \text{a.s.}}} \sum_{i \in [N]}\min_{\bm{x}^i \in \mathcal{X}}& \;f_i(\bm{x}^i) + g_i(\bm{x}^i)
    \end{aligned}\\
    =&\; v_{\text{GDD}}
\end{aligned}
\]

\endproof

\subsection{Proof of proposition \ref{prop:recoverOptimal}} \label{App:recoverOptimal}
\proof{Proof. }
    Suppose there exists $j \in [N]$ such that $\bm{x}^*$ is not optimal, i.e., 
    $$f_j(\bm{x}^*) + g^*_j(\bm{x}^*) > \min_{\bm{x} \in X} \left(f_j(\bm{x}) + g^*_j(\bm{x})\right).$$
    
    Since $\bm{x}^*$ is the optimal solution of problem (\ref{problem:primal}), 
    \begin{align*}
        v^* = \sum_{i=1}^N f_i(\bm{x}^*) &= \sum_{i=1}^N f_i(\bm{x}^*) + g^*_i(\bm{x}^*)\\
        &\geq \left(\sum_{i \neq j}^N \min_{\bm{x}^i \in X}\left(f_i(\bm{x}^i) + g^*_i(\bm{x}^i)\right)\right) + f_j(\bm{x}^*) + g^*_j(\bm{x}^*)\\
        &> \sum_{i = 1}^N \min_{\bm{x}^i \in X}\left(f_i(\bm{x}^i) + g^*_i(\bm{x}^i)\right) = v_{\text{GDD}},
    \end{align*}
    which contradicts the strong duality. 
\endproof

\subsection{Proof of Proposition \ref{prop:optimal-regularizer}}
\proof{Proof.}
    By Theorem~\ref{prop:strong_duality}, we have $v_{\operatorname{GDD}} = v^*$. It remains to show that $\{g_i^*\}_{i \in [N]}$ attains the maximum in~\eqref{problem:generalized_dual_decomposition}.

    First, $\{g_i^*\}_{i \in [N]}$ is feasible for~\eqref{problem:generalized_dual_decomposition} since for any $\bm{x} \in X$,
    \begin{align*}
        \sum_{i=1}^N g_i^*(\bm{x}) \ = \ \sum_{i=1}^N \left(\frac{1}{N}\sum_{i'=1}^N f_{i'}(\bm{x}) - f_i(\bm{x})\right) \ = \ \sum_{i'=1}^N f_{i'}(\bm{x}) - \sum_{i=1}^N f_i(\bm{x}) \ = \ 0.
    \end{align*}
    Moreover, for each $i \in [N]$ and any $\bm{x}^i \in X$,
    \[
        f_i(\bm{x}^i) + g_i^*(\bm{x}^i) \ = \ \frac{1}{N}\sum_{i'=1}^N f_{i'}(\bm{x}^i).
    \]
    Therefore,
    \begin{align*}
        \sum_{i=1}^N \min_{\bm{x}^i \in X} \left\{f_i(\bm{x}^i) + g_i^*(\bm{x}^i)\right\} \ &= \ \sum_{i=1}^N \min_{\bm{x}^i \in X} \frac{1}{N}\sum_{i'=1}^N f_{i'}(\bm{x}^i) \\
        \ &= \ N \cdot \frac{1}{N} \min_{\bm{x} \in X} \sum_{i'=1}^N f_{i'}(\bm{x}) \\
        \ &= \ \min_{\bm{x} \in X} \sum_{i=1}^N f_i(\bm{x}) \ = \ v^* \ = \ v_{\operatorname{GDD}},
    \end{align*}
    where the second equality holds because the $N$ inner minimization problems are identical.
\endproof

\subsection{Proof of Theorem~\ref{prop:sufficientConverge}}\label{sed:suffAlg}
\proof{Proof. }
    We denote lower bound in iteration $t$ of the Algorithm \ref{alg:MIPGDD} by $lb_t$. Since it is non-decreasing and upper bounded by $v^*$, the limit $\lim_{t \rightarrow \infty} lb_t$ exists. Then we prove that $\lim_{t \rightarrow \infty} lb_t = v^*$ if the conditions in the statement are satisfied.
    

    If the first condition is satisfied, then for any $g_i(\mathcal{X}^{t}, \bm{x})$ satisfying $\sum_{i \in [N]} g_i(\mathcal{X}^{t}, \bm{x}) \leq 0$ and let $\bm{x}^*$ denote the optimal solution of problem \ref{problem:primal}, we have
    \begin{align*}
        & \lim_{t \rightarrow \infty} lb_t\\
        = & \lim_{t \rightarrow \infty} \max_{t' \leq t} \sum_{i \in [N]} \min_{\bm{x}^i \in X} \left(f_i(\bm{x}^i) + g_i(\mathcal{X}^{t'}, \bm{x}^i)\right)\\
        \leq & \lim_{t \rightarrow \infty} \max_{t' \leq t} \sum_{i \in [N]} f_i(\bm{x}^*) + g_i(\mathcal{X}^{t'}, \bm{x}^*)\\
        \leq & \lim_{t \rightarrow \infty} \sum_{i \in [N]} f_i(\bm{x}^*)\\
        =& v^*.
    \end{align*}

    Suppose the second condition is also satisfied. Then without loss of generality, we can assume that the subsequences for each scenario share the same index. Otherwise, we can select the converging subsequences for each scenario in a nested way, i.e., selecting the converging subsequences from the iterations of the converging subsequences of the previous scenario. It follows that 
    \begin{align*}
        & \lim_{t \rightarrow \infty} lb_t\\
        \geq &\liminf_{r \rightarrow \infty} \sum_{i=1}^N \left(f_i(\bm{x}_{(r)}^i) + g_i(\mathcal{X}^{t(r)}, \bm{x}_{(r)}^i)\right)\\
        \geq & \sum_{i = 1}^N \liminf_{r \rightarrow \infty} \left(f_i(\bm{x}_{(r)}^i) + g_i(\mathcal{X}^{t(r)}, \bm{x}_{(r)}^i)\right)\\
        \geq& \sum_{i=1}^N \liminf_{r \rightarrow \infty} \left(f_i(\bm{x}_{(r)}^i) + g_i^*(\bm{x}_{(r)}^i)\right)\\
        \geq & \sum_{i=1}^N \liminf_{r \rightarrow \infty} \min_{\bm{x}^i \in X} \left(f_i(\bm{x}^i) + g_i^*(\bm{x}^i)\right)\\
        =& N \min_{\bm{x}^i \in X}\frac{\sum_{i \in [N]} f_i(\bm{x}^i)}{N}\\
        =& v^*.
    \end{align*}

    Thus, we have $\lim_{t \rightarrow \infty} lb_t = v^*$.
\endproof

\subsection{Proof of Proposition \ref{prop:gi-milp}} \label{sec:gi-milp}
\proof{Proof. }
It is sufficient to show that $\operatorname{lsc} \left(g_i\right)(\mathcal{X}^t, \bm{x}):=\liminf g_i(\mathcal{X}^t, \bm{x}) = \min_{k: \bm{x} \in \operatorname{cl}(\mathcal{P}_k^t)} g_i^*(\bm{x}_{(k)})$. 

    We start by  
    \begin{align*}
        &\liminf g_i(\mathcal{X}^t, \bm{x})\\
        =& \lim_{\epsilon \rightarrow 0} \left(\inf_{\bm{x}' \in X\cap B(\bm{x}, \epsilon)} g_i(\mathcal{X}^t, \bm{x}')\right)\\
        =& \lim_{\epsilon \rightarrow 0} \left(\min_{k: \bm{x} \in \operatorname{cl}(\mathcal{P}_k^t)} \inf_{\bm{x}' \in \mathcal{P}_k^t \cap B(\bm{x}, \epsilon)} g_i(\mathcal{X}^t, \bm{x}')\right)\\
        =& \lim_{\epsilon \rightarrow 0} \left(\min_{k: \bm{x} \in \operatorname{cl}(\mathcal{P}_k^t)} \inf_{\bm{x}' \in \mathcal{P}_k^t \cap B(\bm{x}, \epsilon)} g^*_i(\bm{x}_{(k)})\right)\\
        =& \min_{k: \bm{x} \in \operatorname{cl}(\mathcal{P}_k^t)} g_i^*(\bm{x}_{(k)}).
    \end{align*}

Therefore the statement in the proposition holds. 

Besides, we have 
\begin{align*}
    &\inf_{\bm{x} \in X} \left\{f_i(\bm{x}) + \liminf g_i(\mathcal{X}^t, \bm{x})\right\}\\
    =& \inf_{\bm{x} \in X} \left\{f_i(\bm{x}) + \lim_{\epsilon \rightarrow 0} \left(\inf_{\bm{x}' \in X \cap B(\bm{x}, \epsilon)}g_i(\mathcal{X}^t, \bm{x}')\right)\right\}\\
    =& \inf_{\bm{x} \in X} \lim_{\epsilon \rightarrow 0} \left(f_i(\bm{x}) + \inf_{\bm{x}' \in X \cap B(\bm{x}, \epsilon)}g_i(\mathcal{X}^t, \bm{x}')\right)\\
    =& \inf_{\bm{x} \in X} \lim_{\epsilon \rightarrow 0} \left(\inf_{\bm{x}' \in X \cap B(\bm{x}, \epsilon)}f_i(\bm{x}') + g_i(\mathcal{X}^t, \bm{x}')\right)\\
    =& \inf_{\bm{x} \in X} \liminf \left(f_i(\bm{x}) + g_i(\mathcal{X}^t, \bm{x})\right)\\
    =& \inf_{\bm{x} \in X} f_i(\bm{x}) + g_i(\mathcal{X}^t, \bm{x}),
\end{align*}
where the third equation from the end holds because $f_i$ is continuous by assumption and in the last equation we use the fact that $\inf_{\bm{x} \in X} h(\bm{x}) = \inf_{\bm{x} \in X} \liminf h(\bm{x})$ for any function $h$. For completeness, we show the proof below:
\begin{enumerate}
    \item $\inf_{\bm{x} \in X} \liminf h(\bm{x}) = \inf_{\bm{x} \in X} \lim_{\epsilon \rightarrow 0} \left(\inf_{\bm{x}' \in X \cap B(\bm{x}, \epsilon)}h(\bm{x}')\right) \geq \inf_{\bm{x} \in X} \lim_{\epsilon \rightarrow 0} \left(\inf_{\bm{x}' \in X}h(\bm{x}')\right) = \inf_{\bm{x}\in X} h(\bm{x})$.
    \item $\inf_{\bm{x} \in X} \liminf h(\bm{x}) = \inf_{\bm{x}\in X} \lim_{\epsilon \rightarrow 0} \left(\inf_{\bm{x}' \in X \cap B(\bm{x}, \epsilon)}h(\bm{x}')\right) \leq \inf_{\bm{x} \in X} \lim_{\epsilon \rightarrow 0} h(\bm{x}) = \inf_{\bm{x} \in X} h(\bm{x})$.
\end{enumerate}
Thus $\inf_{\bm{x} \in X} h(\bm{x}) = \inf_{\bm{x} \in X} \liminf h(\bm{x})$.
\endproof

\subsection{Proof of Proposition \ref{prop:simpleApprox}} \label{sec:simpleApprox}
\proof{Proof. }
Since $X$ is compact by Assumption \ref{assumption:compact}, we define the radius of $X$ as $r:= \max_{\bm{x}, \bm{y} \in X} \|\bm{x} - \bm{y}\|_2$ and there exists $l, u \in \mathbb{R}$ such that $X \subseteq \prod_{d=1}^{n_1} [l, u]$. Since $g_i^*, \;\forall i \in [N]$ is Lipschitz continuous by Assumption \ref{assump:CCR} and Lemma \ref{lemma:LipschitzCont}, we denote the maximum Lipschitz constant of $g_i^*$ over $i \in [N]$ as $\mathcal{L}$.

Let $\epsilon' > 0$ be an arbitrary scalar, we consider the point set $X' = \left\{\bm{x}: x_d = l + z_d \frac{\epsilon'}{n_1 \mathcal{L}}, z_d \in \mathbb{Z}^+, \forall d \in [n_1]\right\} \cap X$ and create a Voronoi partition $\left\{\mathcal{P}_k\right\}_{k=1}^{K}$ with the points in $X'$ as the centers (denoted by $\bm{x}_{(k)}$). It follows that $\max_{\bm{x}, \bm{y} \in \mathcal{P}_k} \|\bm{x} - \bm{y}\|_2 \leq \frac{\epsilon'}{\sqrt{n_1}\mathcal{L}} \leq \frac{\epsilon'}{\mathcal{L}}$. Thus, $\max_{\bm{x} \in \mathcal{P}_k} |g_i^*(\bm{x}) - g_i^*(\bm{x}_{(k)})| \leq \mathcal{L}\max_{\bm{x} \in \mathcal{P}_k}\|\bm{x} - \bm{x}_{(k)}\|_2 \leq \mathcal{L} \frac{\epsilon'}{\mathcal{L}} = \epsilon'$.

Let $g_i(\bm{x}) := \sum_{k \in [K]} g_i^*(\bm{x}_{(k)}) \mathbbm{1}\left({\bm{x} \in \mathcal{P}_k}\right)$, then we have $\min_{\bm{x}^i \in X} \left\{f_i(\bm{x}^i) + g_i(\bm{x}^i)\right\} \geq \min_{\bm{x}^i \in X} \left\{f_i(\bm{x}^i) + g_i^*(\bm{x}^i) - \epsilon'\right\}$. The statement follows if we let $\epsilon' = \frac{\epsilon}{N}$.
\endproof

\subsection{Proof of Theorem~\ref{prop:MIPConvergence}} \label{sec:suffSimple}
\proof{Proof. }
    We show that the first condition in Theorem~\ref{prop:sufficientConverge} is satisfied as below: 
    \begin{align*}
        &\sum_{i \in [N]} g_i(\mathcal{X}^{t}, \bm{x}) \\
        =& \sum_{i \in [N]} \sum_{k=1}^{K_t} g_i^*(\bm{x}_{(k)}) \mathbbm{1}\left({\bm{x} \in \mathcal{P}_k^{t}}\right)\\
        =& \sum_{k=1}^{K_t} \sum_{i \in [N]} g_i^*(\bm{x}_{(k)}) \mathbbm{1}\left({\bm{x} \in \mathcal{P}_k^{t}}\right)\\
        =& 0.
    \end{align*}

    For each scenario $i$, for any subsequence of the sequence of optimal subproblem solutions generated in Algorithm \ref{alg:MIPGDD}, there is a converging subsequence by Assumption \ref{assumption:compact}. Denote the sequence by $\left\{\bm{x}^i_{(r)}\right\}_{r=1}^\infty$ and let $\tilde{\bm{x}}^{i*}$ be the limit, i.e., 
    $$\lim_{r \rightarrow \infty} \bm{x}^i_{(r)} = \tilde{\bm{x}}^{i*} \in X$$

    For each element $\bm{x}^i_{(r)}$, we denote its iteration index in Algorithm \ref{alg:MIPGDD} as $t(r)$. Then, we denote the element closest to $\bm{x}^i_{(r)}$ in $\mathcal{X}^{t(r)}$ as $c(\bm{x}^i_{(r)})$. We can claim that $\lim_{r \rightarrow \infty} c(\bm{x}^i_{(r)}) = \tilde{\bm{x}}^{i*}$: For any given $\epsilon > 0$ there exists $R' \in \mathbb{Z}^+$, such that for all $r \geq R'$, $\|\bm{x}^i_{(r)} - \tilde{\bm{x}}^{i*}\| \leq \frac{\epsilon}{3}$. Thus, $\|c(\bm{x}^i_{(r)}) -\tilde{\bm{x}}^{i*}\| \leq \|c(\bm{x}^i_{(r)}) - \bm{x}^i_{(r)}\| + \|\bm{x}^i_{(r)} - \tilde{\bm{x}}^{i*}\| \leq \|\bm{x}^i_{(R')} - \bm{x}^i_{(r)}\| + \|\bm{x}^i_{(r)} - \tilde{\bm{x}}^{i*}\| \leq \frac{2\epsilon}{3} + \frac{\epsilon}{3} = \epsilon$.  
    
    It follows that 
    \begin{align}
        &\liminf_{r \rightarrow \infty} g_i(\mathcal{X}^{t(r)}, \bm{x}^i_{(r)}) \nonumber\\
        =& \liminf_{r \rightarrow \infty} g_i^*(c(\bm{x}^i_{(r)})) \\
        =& g_i^*(\tilde{\bm{x}}^{i*})
        \label{eq:limitEquiv}
    \end{align}
    where the first equation holds by definition of $g_i$ and Voronoi partition, and the last equation holds because both $f_i$ and $g_i^*$ is continuous by Assumption \ref{assump:CCR} and Lemma \ref{lemma:LipschitzCont}. Then, by the continuity of $f_i,\; i \in [N]$, the second condition in Theorem~\ref{prop:sufficientConverge} is also satisfied.
\endproof

\subsection{Proof of Theorem~\ref{prop:LipschitzContCut}} \label{sec:LipschitzContCut}
\proof {Proof. }
It is sufficient to show that $g_i(\mathcal{X}^{t}, \bm{x})$ satisfies the conditions in Theorem~\ref{prop:sufficientConverge}. 
\begin{enumerate}
    \item We have 
    \begin{align*}
        &\sum_{i \in [N]} g_i(\mathcal{X}^{t}, \bm{x})\\
        =&\sum_{i \in [N]} \max_{k \in [K_t]} \mathcal{L}_i(\bm{x}_{(k)}, \bm{x})\\
        \leq & \sum_{i \in [N]} g_i^*(\bm{x})\\
        = & 0,
    \end{align*}
    where the first equation holds by definition of $g_i,\; i \in [N]$; The first inequality holds by the validity of cuts and the last equation holds by definition of $g_i^*, \; i\in [N]$.
    \item Since $\mathcal{L}_i$ is $L_i-$Lipschitz continuous, we can claim that $g_i(\mathcal{X}^{t}, \bm{x})$ is also $L_i-$Lipschitz continuous and we define $L:=\max_{i \in [N]} L_i$. By Assumption \ref{assumption:compact}, for each scenario $i$ and any subsequence of optimal subproblem solutions generated in Algorithm \ref{alg:MIPGDD}, there is a converging subsequence $\left\{\bm{x}_{(r)}^i\right\}_{r=1}^\infty$ with limit $\tilde{\bm{x}}^{i*}$, we use $t(r)$ to denote the iteration index of the element $\bm{x}_{(r)}^i$. It follows that: 
    \begin{align*}
        &\liminf_{r \rightarrow \infty }g_i(\mathcal{X}^{t(r)}, \bm{x}_{(r)}^i)\\
        \geq &\liminf_{r \rightarrow \infty } g_i(\mathcal{X}^{t(r)}, \bm{x}^i_{(r-1)}) - L\|\bm{x}_{(r)}^i - \bm{x}_{(r - 1)}^i\|\\
        = & \liminf_{r \rightarrow \infty } g_i^*(\bm{x}^i_{(r-1)})\\
        =& g_i^*(\tilde{\bm{x}}^{i*}),
    \end{align*}
    where the last equation holds by the continuity of $g_i^*$ from Assumption \ref{assump:CCR} and Lemma \ref{lemma:LipschitzCont}. Furthermore, utilizing the continuity of $f_i$, we have $\liminf_{r \rightarrow \infty } f_i(\bm{x}^i_{(r)}) + g_i(\mathcal{X}^{t(r)}, \bm{x}_{(r)}^i) \geq \liminf_{r \rightarrow \infty } f_i(\bm{x}^i_{(r)}) + g_i^*(\bm{x}^i_{(r)})$.
    Therefore both conditions in Theorem~\ref{prop:sufficientConverge} are satisfied.
\end{enumerate}
\endproof

\subsection{Lipschitz Continuity of $f_i$} \label{sec:LipschitzCont}
\begin{lemma}\label{lemma:LipschitzCont}
    $f_i$ is Lipschitz continuous under Assumption \ref{assump:CCR}.
\end{lemma}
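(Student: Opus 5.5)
We need to show that each $f_i$ is Lipschitz continuous on $X$ under Assumption~\ref{assump:CCR}. The plan is to fix the integer part $\bm{y}$ of the recourse, show that the resulting continuous-recourse value function is Lipschitz in $\bm{x}$ with a constant independent of $\bm{y}$, and then take the minimum over $\bm{y}$. Concretely, for each $\bm{y}\in Y\cap\mathbb{Z}^{p_2}$ define
\[
\phi_i(\bm{x},\bm{y}) \ := \ \min_{\bm{z}\in Z}\big\{\ell(\bm{y},\bm{z}) : L(\bm{x},\bm{y},\bm{z})\le \bm{0}\big\},
\]
so that $f_i(\bm{x})=\min_{\bm{y}}\phi_i(\bm{x},\bm{y})$. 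A pointwise minimum of functions sharing a common Lipschitz constant $C$ is itself $C$-Lipschitz. This holds whenever the minimum is finite, which we check in each case below.

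\textbf{Affine case.} Here $\phi_i(\cdot,\bm{y})$ is the value function of a linear program whose right-hand side depends affinely on $(\bm{x},\bm{y})$. By complete recourse, the program is feasible for all $\bm{x}\in X$. It is also bounded, since $f_i$ is implicitly assumed finite; otherwise $\phi_i\equiv-\infty$ for every right-hand side, because the recession cone does not depend on the right-hand side. By LP duality,
\[
\phi_i(\bm{x},\bm{y}) \ = \ \text{const}(\bm{y}) + \max_{\bm{\pi}\in \operatorname{ext}(\Pi)} \bm{\pi}^\top(\text{rhs}(\bm{x},\bm{y})),
\]
where $\Pi$ is a dual polyhedron that does not depend on $(\bm{x},\bm{y})$, and dual recession directions are excluded by feasibility. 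Since $\Pi$ has finitely many extreme points, $\phi_i(\cdot,\bm{y})$ is piecewise linear and Lipschitz with constant $\max_{\bm{\pi}}\|\bm{\pi}\|\,\|\bm{T}\|$, uniformly in $\bm{y}$. This may be infinite-$\bm{y}$-many but the constant is uniform. The minimum over $\bm{y}$ is attained or at least finite, and therefore inherits this constant.

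\textbf{Nonlinear case.} Fix $\bm{y}$ and take $\bm{x},\bm{x}'\in X$ with an optimal $\bm{z}$ for $\bm{x}$; optimal solutions exist by compactness of $Z$ and continuity. The goal is a $\bm{z}'$ feasible for $\bm{x}'$ with $\|\bm{z}'-\bm{z}\|\le C\|\bm{x}-\bm{x}'\|$. By the Slater condition there is a point $\hat{\bm{z}}$ with $L(\bm{x}',\bm{y},\hat{\bm{z}})<-\delta$. Set $\bm{z}'=(1-\theta)\bm{z}+\theta\hat{\bm{z}}$ with $\theta=\min\{1,L_L\|\bm{x}-\bm{x}'\|/(L_L\|\bm{x}-\bm{x}'\|+\delta)\}$. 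Convexity of $L$ in $\bm{z}$ together with Lipschitz continuity of $L$ in $\bm{x}$ (constant $L_L$) gives
\[
L(\bm{x}',\bm{y},\bm{z}') \ \le\ (1-\theta)\big(L_L\|\bm{x}-\bm{x}'\|\big)-\theta\delta \ \le\ 0,
\]
which is Robinson's argument. Then $\|\bm{z}'-\bm{z}\|\le \theta\operatorname{diam}(Z)\le (L_L\operatorname{diam}(Z)/\delta)\|\bm{x}-\bm{x}'\|$. Hence
\[
\phi_i(\bm{x}',\bm{y})-\phi_i(\bm{x},\bm{y}) \ \le\ L_\ell L_L\operatorname{diam}(Z)\delta^{-1}\|\bm{x}-\bm{x}'\|.
\]
Swapping the roles of $\bm{x}$ and $\bm{x}'$ gives the reverse inequality. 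The constant does not depend on $\bm{y}$, and compactness of $Y$ makes the set of integer $\bm{y}$ finite, so taking the minimum preserves it.

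\textbf{Main obstacle.} The delicate step is the affine case: making sure that the dual extreme-point set is uniform in $\bm{y}$ and that boundedness holds. I would handle both by noting that the dual feasible region $\{\bm{\pi}: \bm{W}_z^\top\bm{\pi}\le \bm{q}_z\}$ involves only the $\bm{z}$-columns, so it is independent of $\bm{x}$ and $\bm{y}$. I would also add a short remark that finiteness of $f_i$ is implicitly assumed.
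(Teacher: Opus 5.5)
Your proof is correct and follows the same route as the paper. You fix the integer part $\bm{y}$, show that the continuous-recourse value function is Lipschitz in $\bm{x}$ with a constant independent of $\bm{y}$, and then pass to the minimum over $\bm{y}$, with essentially the same constant $L_\ell L_L\operatorname{diam}(Z)/\delta$ in the nonlinear case. The difference is that the paper cites Ahmed et al.\ (2022) for the affine case and Robinson's convex extension of Hoffman's lemma for the nonlinear case, whereas you prove both inline (via dual extreme points, and via the explicit convex combination with the Slater point that underlies Robinson's bound). Your version also makes explicit the implicit finiteness of $f_i$ and the uniformity over possibly infinitely many $\bm{y}$ in the affine case.
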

\proof{Proof. } The statement holds for the first case as shown in \cite{ahmed2022stochastic}. Thus, it remains to show the statement also holds for the second case. Define
\begin{align*}
    V(\bm{x}, \bm{y}):= \min_{\bm{z}}\; &\ell(\bm{y}, \bm{z})\\
    \text{s.t. }& L(\bm{x}, \bm{y}, \bm{z}) \leq 0\\
                & \bm{z} \in Z \cap \mathbb{R}^{n_2 - p_2}
\end{align*}
 Then it is sufficient to show that $V(\bm{x}, \bm{y})$ is Lipschitz continuous in $\bm{x}$ for any $\bm{y}$ since we can rewrite 
\begin{align*}
    f_i(\bm{x}) = \min_{\bm{y} \in Y \cap \mathbb{Z}^{p_2}} V(\bm{x}, \bm{y}).
\end{align*}


By the second case of Assumption \ref{assump:CCR} for any $\bm{x} \in \mathbb{R}^{n_1}, \bm{y} \in Y \cap \mathbb{Z}^{p_2}$, $L(\bm{x}, \bm{y}, \bm{z})$ is convex in $\bm{z}$ and there exists a $\bm{z}'$ such that $L(\bm{x}, \bm{y}, \bm{z}') < -\delta$. Let $R$ be the diameter of $Z$. By the convex extension of the Hoffman's theorem in \cite{robinson1975application}, we have 
$$d\left(\bm{z}^*, \left\{L(\bm{x}, \bm{y}, \bm{z}) \leq 0, \bm{z} \in Z \cap \mathbb{R}^{n_2 - p_2} \right\}\right) \leq \delta^{-1} R \|L(\bm{x}, \bm{y}, \bm{z}^*)^+\|$$
where $L(\bm{x}, \bm{y}, \bm{z}^*)^+$ is the projection of $L(\bm{x}, \bm{y}, \bm{z}^*)$ to $\mathbb{R}^{m_2}_+$, and $\bm{z}^*$ is an arbitrary vector in $Z \cap \mathbb{R}^{n_2 - p_2}$.

Suppose $\bm{z}^*$ and $\bm{x}^*$ satisfy $L(\bm{x}^*, \bm{y}, \bm{z}^*) \leq 0$. Since $L$ is Lipschitz continuous (denote its Lipschitz constant as $\mathcal{L}$), we have 
$$L(\bm{x}, \bm{y}, \bm{z}^*) \leq L(\bm{x}^*, \bm{y}, \bm{z}^*) + \mathcal{L}\|\bm{x} - \bm{x}^*\|\bm{e} \leq \mathcal{L}\|\bm{x} - \bm{x}^*\|\bm{e}$$
It follows that 
\begin{align*}
    &d\left(\bm{z}^*, \left\{L(\bm{x}, \bm{y}, \bm{z}) \leq 0, \bm{z} \in Z \cap \mathbb{R}^{n_2 - p_2} \right\}\right) \\
    \leq& \delta^{-1} R \|L(\bm{x}, \bm{y}, \bm{z}^*)^+\|\\
    \leq & \delta^{-1}R \|\bm{e}\|\mathcal{L}\|\bm{x} - \bm{x}^*\|
\end{align*}

Therefore, we have 
$$\left\{\bm{z}: L(\bm{x}^*, \bm{y}, \bm{z}) \leq 0\right\} \subseteq \left\{\bm{z}: L(\bm{x}, \bm{y}, \bm{z}) \leq 0\right\} + \delta^{-1}R \|\bm{e}\|\mathcal{L}\|\bm{x} - \bm{x}^*\|\Delta $$
where $\Delta:=\left\{\epsilon \in \mathbb{R}^{n_2 - p_2}:\|\epsilon\| \leq 1\right\}$.

Next, consider $V(\bm{x}, \bm{y})$ and $V(\bm{x}^*, \bm{y})$ and let $\bm{z}$ and $\bm{z}^*$ be the feasible solutions respectively. It follows that 
$$V(\bm{x}^*, \bm{y}) \leq \ell(\bm{y}, \bm{z}^*) \leq \ell(\bm{y}, \bm{z}) + \mathcal{L}_\ell \|\bm{z} - \bm{z}^*\| \leq \ell(\bm{y}, \bm{z}) + \mathcal{L}_\ell\delta^{-1}R\|\bm{e}\|\mathcal{L}\|\bm{x} - \bm{x}^*\|$$
where $\mathcal{L}_\ell$ is the Lipschitz constant of $\ell$. Taking minimum over $\bm{z}$ on both sides gives
$$V(\bm{x}^*, \bm{y}) \leq V(\bm{x}, \bm{y}) + \mathcal{L}_\ell\delta^{-1}R\|\bm{e}\|\mathcal{L} \|\bm{x} - \bm{x}^*\|$$
i.e., $V(\bm{x}, \bm{y})$ is Lipschitz continuous in $\bm{x}$. And it follows that $f_i(\bm{x})$ is Lipschitz continuous.
\endproof

\subsection{Proof of Proposition \ref{prop:polyStudy}} \label{sec:polyStudy}
\proof{Proof. }
By definition, we have the convex hull of the epigraph of $g$ is 
\begin{align*}
    \operatorname{conv}\left(\operatorname{epi}\left(g\right)\right) :=& \left\{\left(\bm{x}, \theta\right) \in \mathbb{R}^{n_1} \times \mathbb{R}\; \middle|\; 
    \begin{aligned}
        \sum_{k=1}^K \lambda_k &= 1\\
        \sum_{k=1}^K \lambda_k \bm{x}_k &= \bm{x},\\
        \sum_{k=1}^K \lambda_k \theta_k &= \theta \\
        \theta_k &\geq g_k, \forall k \in [K]\\
        \bm{A}_k \bm{x}_k &\leq \bm{b}_k\\
        \lambda_k &\geq 0, \forall k \in [K]\\
    \end{aligned}\right\}
\end{align*}
It follows that 
\begin{align*}
    \overline{\operatorname{co}}\left(g\right)(\bm{x}) &= \begin{aligned}[t]
                                            \min_{\theta}\; & \theta\\
                                            \text{s.t.}\; & (\bm{x}, \theta) \in \operatorname{conv}(\operatorname{epi}(g))
                                        \end{aligned}\\
                                     &= \begin{aligned}[t]
                                         \min\; & \theta\\
                                         \text{s.t. }& \sum_{k=1}^K \lambda_k = 1\\
                                         & \lambda_k \geq 0, \forall k \in [K]\\
                                         & \sum_{k=1}^K \lambda_k \bm{x}_k = \bm{x}\\
                                         & \sum_{k=1}^K \lambda_k \theta_k  = \theta\\
                                         & \theta_k \geq g_k, \forall k \in [K]\\
                                         & \bm{A}_k \bm{x}_k \leq \bm{b}_k, \forall k \in [K]
                                     \end{aligned}\\
                                     &= \begin{aligned}[t]
                                         \min\; & \theta\\
                                         \text{s.t.}\; & \sum_{k=1}^K \lambda_k = 1\\
                                         & \lambda_k \geq 0, \forall k \in [K]\\
                                         & \sum_{k=1}^K \bm{y}_k = \bm{x}\\
                                         & \sum_{k=1}^K \lambda_k \theta_k  = \theta\\
                                         & \theta_k \geq g_k, \forall k \in [K]\\
                                         & \bm{A}_k \bm{y}_k \leq \lambda_k \bm{b}_k, \forall k \in [K]
                                     \end{aligned}\\
                                     &= \begin{aligned}[t]
                                         \max_{\alpha, \bm{\beta}, \bm{\eta}_k \geq 0, \forall k \in [K]} \; & \alpha + \bm{\beta}^\top \bm{x}\\
                                         \text{s.t.}\; & \alpha + \bm{b}_k^\top \bm{\eta}_k \leq g_k, k \in [K]\\
                                         & \bm{A}_k^\top \bm{\eta}_k = \bm{\beta}, k \in [K]
                                     \end{aligned}
\end{align*}
The LP attains its optimum at an extreme point of $E$ in $(\alpha, \bm{\beta})$-space. Hence
\[
    \overline{\operatorname{co}}(g)(\bm{x}) \ = \ \max_{(\alpha, \bm{\beta}) \in \operatorname{ext}(E)} \alpha + \bm{\beta}^\top \bm{x},
\]
which gives the epigraph representation in the proposition.
\endproof

\subsection{Proof of Proposition \ref{prop:convexHullProj}} \label{App:convexHullProj}
\proof{Proof. }
    Define the set of index $I:= \left\{j \;\middle|\; \bm{x}^{(j)} \in A\right\} \subseteq [2^{n_1}]$. 
    Then by definition,
    \begin{align*}
        \text{conv}\left(\text{epi } I_A\right):= \left\{\left(\bm{x}, \theta\right)\;
        \middle|\;\begin{aligned}
            \sum_{j \in I} \lambda_j &\leq \theta\\
            \sum_{j \in I} \lambda_j \bm{x}^{(j)} + \sum_{j \notin I} \lambda_j \bm{x}^{(j)} &= \bm{x}\\
            \sum_{j \in I}\lambda_j + \sum_{j \notin I} \lambda_j &= 1\\
            \lambda_j &\geq 0, j \in [2^{n_1}]
        \end{aligned}
        \right\}
    \end{align*}

    Thus, 
    \begin{align*}
        &\operatorname{conv}\left(\operatorname{epi} I_A\right) \cap \left\{\left(\bm{x}, 0\right) \;\middle |\; \forall \bm{x} \in [0, 1]^{n_1}\right\}\\
        =& \left\{\left(\bm{x}, 0\right) \; \middle| \; 
        \begin{aligned}
            \sum_{j \in I} \lambda_j &= 0\\
            \sum_{j \in I} \lambda_j \bm{x}^{(j)} + \sum_{j \notin I} \lambda_j \bm{x}^{(j)} &= \bm{x}\\
            \sum_{j\in I}\lambda_j + \sum_{j \notin I}\lambda_j &= 1\\
            \lambda_j &\geq 0, \forall j\in [2^{n_1}]\\
            \bm{x} &\in [0, 1]^{n_1}
        \end{aligned}
            \right\}\\
        =& \left\{\left(\bm{x}, 0\right) \; \middle| \;
        \begin{aligned}
            \sum_{j \notin I} \lambda_j \bm{x}^{(j)} &= \bm{x}\\
            \sum_{j \notin I}\lambda_j &= 1\\
            \lambda_j &\geq 0, \forall j\notin I\\
            \bm{x}&\in [0, 1]^{n_1}
        \end{aligned}
            \right\}\\
        =& \left\{\left(\bm{x}, 0\right) \;\middle| \;
        \bm{x} \in \text{conv}\left(A^c\right)
            \right\}
    \end{align*}
    It follows that
     \[
        \operatorname{conv}\left(A^c\right) = \operatorname{Proj}_{\bm{x}}\left(\text{conv}\left(\operatorname{epi} I_A\right) \cap \left\{\left(\bm{x}, 0\right): \bm{x} \in [0, 1]^{n_1}\right\}\right)
        \]
    Finally, we connect the above to the statement of the proposition. On the one hand, since $A^c$ is a finite set of binary vectors, the set $\{(\theta, \bm{x}) : \theta \geq 0, \bm{x} \in A^c\}$ has convex hull
\[
    \operatorname{conv}\big\{(\theta, \bm{x}) : \theta \geq 0,\ \bm{x} \in A^c\big\} \ = \ [0, \infty) \times \operatorname{conv}(A^c).
\]
On the other hand, $(0, \bm{x}) \in \overline{\operatorname{co}}(I_A)$ (i.e., $(0, \bm{x}) \in \operatorname{epi}(\overline{\operatorname{co}}(I_A))$) is equivalent to $\overline{\operatorname{co}}(I_A)(\bm{x}) \leq 0$. Since $I_A \geq 0$ implies $\overline{\operatorname{co}}(I_A) \geq 0$, this is in turn equivalent to $\overline{\operatorname{co}}(I_A)(\bm{x}) = 0$, which by the preceding chain of equalities holds if and only if $\bm{x} \in \operatorname{conv}(A^c)$. Therefore,
\[
    \big\{(\theta, \bm{x}) : \theta \geq 0,\ (0, \bm{x}) \in \overline{\operatorname{co}}(I_A)\big\} \ = \ [0, \infty) \times \operatorname{conv}(A^c),
\]
and the two sides of the proposition coincide.
\endproof

\subsection{Proof of Proposition \ref{prop:convexEnvForChain}} \label{App:convexEnvForChain}
\proof{Proof. }
    We start from an equivalent definition of convex envelope by applying the conjugate twice on $I_A$ (see Theorem 12.2 in \cite{rockafellar1997convex}):
    \begin{subequations}
    \begin{align}
        \overline{\operatorname{co}}
        \left(I_A\right)(\bm{x}) = \max_{\bm{\alpha} \in \mathbb{R}^{n_1}, \beta \in \mathbb{R}} & \bm{\alpha}^\top\bm{x} + \beta\\
        \text{s.t. } & \bm{\alpha}^\top \bar{\bm{x}} + \beta \leq 1, \bar{\bm{x}} \in A \label{eq:conenvCons1}\\
        & \bm{\alpha}^\top \bar{\bm{x}} + \beta \leq 0, \bar{\bm{x}} \notin A \label{eq:conenvCons2}
    \end{align}
    \end{subequations}
    Since the optimal solutions of linear programming problems always contain the extreme point, it is sufficient to find all extreme points of $E := \left\{\left(\bm{\alpha}, \beta\right)\mid (\ref{eq:conenvCons1}), (\ref{eq:conenvCons2})\right\}$ to describe the convex envelope of $I_A(\bm{x})$.

    Suppose $\left(\bm{\alpha}^*, \beta^*\right)$ is an extreme point of $E$, then $\left(\bm{\alpha}^*, \beta^*\right)$ is the 
    intersection of $n_1 + 1$ linearly independent hyperplanes in $E$.

    Let $H_{A} := \left\{\bm{\alpha}^\top\bar{\bm{x}} + \beta = 1| \bar{\bm{x}} \in A\right\}$, 
    $H_{A^c} := \left\{\bm{\alpha}^\top\bar{\bm{x}} + \beta = 0| \bar{\bm{x}} \notin A\right\}$, there are 
    three cases:
    \begin{enumerate}
        \item Case 1: All $n_1 + 1$ hyperplanes come from $H_{A^c}$. 

        In this case, the solution $\left(\bm{\alpha}^*, \beta^*\right)$ of the system is $(\bm{0}, 0)$ 
        because the right-hand side is $\bm{0}$, and this gives the cut
        $\theta \geq 0$.
        
        The case exists because one can choose hyperplanes with $\bar{\bm{x}} = \bm{e}_2, \ldots, \bm{e}_{n_1}$, 
        $\bar{\bm{x}} = \bm{e}_2 + \ldots + \bm{e}_{n_1}$ and $\bar{\bm{x}} = \bm{e}_1 + \bm{e}_{n_1}$.
        Those $n_1 + 1$ hyperplanes are linearly independent and those points are not in $A$ since all $A$ elements have their 1-entries in the beginning. Then the result follows.

        \item Case 2: At least one hyperplane comes from $H_{A}$ and suppose $\bm{\alpha}^\top \bar{\bm{x}}^{*} + \beta = 1$ is 
        one of the $n_1 + 1$ hyperplanes, where $\bar{\bm{x}}^* \in A,\; \bar{\bm{x}}^* + \bm{e}_{|I(\bar{\bm{x}}^*)| + 1} \in A$.
        
        Let $n:=|I(\bar{\bm{x}}^*)|$, consider the following subsystem
        \[
        \begin{aligned}
                E' :&= \begin{cases}
                    \sum_{j=1}^n \alpha_{j} + \beta = 1, &\\
                    \sum_{j = 1}^{n} \alpha_j + \alpha_{n + 1} + \beta \leq 1 &\\
                    \sum_{j = 1}^{n} \alpha_j + \alpha_{n + j'}  + \beta \leq 0, & \forall j' \in [n_1 - n] \setminus \left\{1\right\}\\
                    \sum_{j \in [n] \setminus \left\{j'\right\}} \alpha_j + \beta \leq 0, & \forall j' \in [n - 1]\\
                    \sum_{j = 1}^{n - 1} \alpha_j + \beta \leq 0\text{ or }1\\
                    \sum_{j=1}^{n - 1} \alpha_j + \alpha_{n + 1} + \beta \leq 0 &
                \end{cases}\\
                &\Leftrightarrow
                \begin{cases}
                    \sum_{j=1}^n \alpha_{j} + \beta = 1, &\\
                    \alpha_{n + 1} \leq 0, & \\
                    \alpha_{n + j} \leq -1, &\forall j \in [n_1 - n] \setminus \{1\}\\
                    \alpha_{j'} \geq 1, & \forall j' \in [n - 1]\\
                    \alpha_{n} \geq 1 \text{ or }0 & \\
                    \alpha_{n} - \alpha_{n + 1} \geq 1 & 
                \end{cases}
        \end{aligned}
        \]
        where $\sum_{j = 1}^{n - 1} \alpha_j + \beta \leq 0$, i.e., $\alpha_{n} \geq 1$  when $\bar{\bm{x}}^* - \bm{e}_{n} \notin A$; And $\sum_{j = 1}^{n - 1} \alpha_j + \beta \leq 1$, i.e., $\alpha_{n} \geq 0$  when $\bar{\bm{x}}^* - \bm{e}_{n} \in A$. In the following proof, we will discuss on those two cases when necessary.
        
        Next, we show that $E' = E \cap \left\{\bm{\alpha}^\top \bar{\bm{x}}^* + \beta = 1\right\}$. It is sufficient to show that $E' \subseteq E \cap \left\{\bm{\alpha}^\top \bar{\bm{x}}^* + \beta = 1\right\}$ because $E \cap \left\{\bm{\alpha}^\top \bar{\bm{x}}^* + \beta = 1\right\} \subseteq E'$ holds by definition of $E'$.

        Suppose $(\bm{\alpha}, \beta) \in E'$ and for each $\bm{x} \in \{0, 1\}^{n_1}$, let $n':= |I(\bm{x})|$. We first show that if $\bm{x} \in A$, then $\bm{\alpha}^\top \bm{x} + \beta \leq 1$.
        
        There are two possibilities:
        \begin{enumerate}
            \item If $I(\bm{x}) \subseteq I(\bar{\bm{x}}^*)$, then 
            \begin{align*}
                \bm{\alpha}^\top \bm{x} + \beta  &= \alpha_1 + \ldots + \alpha_{n'} + \beta\\
                &= 1 - \alpha_{n' + 1} - \ldots - \alpha_{n}\\
                &\leq 1
            \end{align*}
            \item If $I(\bar{\bm{x}}^*) \subseteq I(\bm{x})$ then 
            \begin{align*}
                \bm{\alpha}^\top \bm{x} + \beta &= \alpha_1 + \ldots + \alpha_{n'} + \beta \\
                &= 1 + \alpha_{n + 1} + \ldots + \alpha_{n'}\\
                &\leq 1
            \end{align*}
        \end{enumerate}

        If $\bm{x} \notin A$, we have 
        
        \begin{align*}
            \bm{\alpha}^\top \bm{x} + \beta &= \sum_{i \in I(\bm{x}) \cap I(\bar{\bm{x}}^*)} \alpha_i + \sum_{i \in I(\bm{x}) \cap I^c(\bar{\bm{x}}^*)} \alpha_i + \beta\\
            &= 1 - \sum_{i \in I^c(\bm{x}) \cap I(\bar{\bm{x}}^*)} \alpha_i + \sum_{i \in I(\bm{x}) \cap I^c(\bar{\bm{x}}^*)} \alpha_i
        \end{align*}
        The right hand side (RHS) is either not greater than 0 or equal to 1 by definition of $E'$. If the RHS is 1, then we must have $\sum_{i \in I^c(\bm{x}) \cap I(\bar{\bm{x}}^*)} \alpha_i = 0, \sum_{i \in I(\bm{x}) \cap I^c(\bar{\bm{x}}^*)} \alpha_i = 0$, where the first equation indicates that $I^c(\bm{x}) \cap I(\bar{\bm{x}}^*) = \{n\}$ or $\emptyset$ and the second equation indicates that $I(\bm{x}) \cap I^c(\bar{\bm{x}}^*) = \left\{n + 1\right\}$ or $\emptyset$.

        Thus, there are four possible values for $\bm{x}$:
        \begin{enumerate}
            \item $I^c(\bm{x}) \cap I(\bar{\bm{x}}^*) = \emptyset, I(\bm{x}) \cap I^c(\bar{\bm{x}}^*) = \emptyset$, indicating that $\bm{x} = \bar{\bm{x}}^*$ which contradicts the assumption that $\bm{x} \notin A$;
            \item $I^c(\bm{x}) \cap I(\bar{\bm{x}}^*) = \left\{n\right\}, I(\bm{x}) \cap I^c(\bar{\bm{x}}^*) = \emptyset$, indicating that $\bm{x} = \bar{\bm{x}}^* - \bm{e}_{n}$. If $\bar{\bm{x}}^* - \bm{e}_{n} \in A$, then this contradicts the assumption that $\bm{x} \notin A$; If $\bar{\bm{x}}^* - \bm{e}_{n } \notin A$, then $\alpha_{n} \geq 1$ which also contradicts the assumption that $\sum_{i \in I^c(\bm{x}) \cap I(\bar{\bm{x}}^*)} \alpha_i = 0$.
            \item $I^c(\bm{x}) \cap I(\bar{\bm{x}}^*) = \emptyset, I(\bm{x}) \cap I^c(\bar{\bm{x}}^*) = \left\{n + 1\right\}$, indicating that $\bm{x} = \bar{\bm{x}}^* + \bm{e}_{n + 1} \in A$, contradicting the assumption that $\bm{x} \notin A$. 
            \item $I^c(\bm{x}) \cap I(\bar{\bm{x}}^*) = \left\{n\right\}, I(\bm{x}) \cap I^c(\bar{\bm{x}}^*) = \left\{n + 1\right\}$, indicating that $\bm{x} = \bar{\bm{x}}^* - \bm{e}_{n} + \bm{e}_{n + 1} \notin A$ but this gives $\bm{\alpha}^\top \bm{x} + \beta = \sum_{j=1}^{n - 1} \alpha_j + \alpha_{n + 1} + \beta \leq 0$ by definition of $E'$.
        \end{enumerate}

        Therefore, we have $\bm{\alpha}^\top \bm{x} + \beta \leq 0, \forall \bm{x} \notin A$. And it follows that $E' = E \cap \left\{\alpha^\top \bar{\bm{x}}^* + \beta = 1\right\}$.

        Then it is sufficient to find all extreme points of $E'$. Since there are $n_1 + 2$ hyperplanes in $E'$, we observed that $\alpha_n \geq 1 \text{ or } 0$, $\alpha_{n + 1} \leq 0$ and $\alpha_n - \alpha_{n + 1} \geq 1$ cannot be in the $n_1 + 1$ hyperplanes at the same time since they are not linearly independent. Thus, at most two of them appear in the $n_1 + 1$ linearly independent hyperplanes. Thus, there are three possible combinations of $n_1 + 1$ linearly independent hyperplanes:
        \begin{enumerate}
            \item $\alpha_{n} \geq 1 \text{ or } 0, \alpha_{n} - \alpha_{n + 1} \geq 1$ are in the $n_1 + 1$ hyperplanes while $\alpha_{n + 1} \leq 0$ is not. This gives the following extreme point:
            \begin{enumerate}
                \item If $\alpha_{n} \geq 0$, then the extreme point is $\alpha_{n} = 0, \alpha_{n + 1} = -1, \alpha_{n + j} = -1, j = 2, \ldots, n_1 - n, \alpha_j = 1, j = 1, \ldots, n - 1, \beta = 2 - n$ and provides the hyperplane for the convex envelope:
                $$\theta \geq \sum_{j = 1}^{n - 1} x_j - \sum_{j=1}^{n_1 - n} x_{n + j} + 2 - n$$
                which is consistent with the expression in the proposition.
                \item If $\alpha_{n} \geq 1$, then the extreme point is $\alpha_{n} = 1, \alpha_{n + 1} = 0, \alpha_{n + j} = -1, j = 2, \ldots, n_1 - n, \alpha_j = 1, j = 1, \ldots, n - 1, \beta = 1 - n$ and provides the following hyperplane for the convex envelope:
                $$\theta \geq \sum_{j = 1}^{n} x_j - \sum_{j=2}^{n_1 - n} x_{n + j} + 1 - n$$
                which is also consistent with the expression in the proposition.
            \end{enumerate}
            \item $\alpha_{n} \geq 1 \text{ or } 0, \alpha_{n + 1} \leq 0$ are in the $n_1 + 1$ hyperplanes while $\alpha_{n} - \alpha_{n + 1} \geq 1$ is not. This gives the following extreme point:
            \begin{enumerate}
                \item If $\alpha_{n} \geq 0$, then the extreme point has $\alpha_{n} = 0, \alpha_{n + 1} = 0$ but it is not in $E'$ due to conflict with $\alpha_n - \alpha_{n + 1}\geq 1$.
                \item If $\alpha_{n} \geq 1$, then the extreme point is $\alpha_{n} = 1, \alpha_{n + 1} = 0, \alpha_{n + j} = -1, j = 2, \ldots, n_1 - n, \alpha_j = 1, j = 1, \ldots, n - 1, \beta = 1 - n$, which provides the following hyperplane for the convex envelope:
                $$\theta \geq \sum_{j = 1}^{n} x_j - \sum_{j=2}^{n_1 - n} x_{n + j} + 1 - n$$
                which is consistent with the expression in the proposition.
            \end{enumerate}
            \item $\alpha_{n} - \alpha_{n + 1} \geq 1, \alpha_{n + 1} \leq 0$ are in the $n_1 + 1$ hyperplanes while $\alpha_{n} \geq 0 / 1$ is not. This gives the following extreme point: $\alpha_{n} = 1, \alpha_{n + 1} = 0, \alpha_{n + j} = -1, j = 2, \ldots, n_1 - n, \alpha_j = 1, j = 1, \ldots, n - 1, \beta = 1 - n$. And this provides the following hyperplane for the convex envelope:
            $$\theta \geq \sum_{j = 1}^{n} x_j - \sum_{j=2}^{n_1 - n} x_{n + j} + 1 - n$$
            which is consistent with the expression in the proposition.
        \end{enumerate}
        \item Case 3: At least one hyperplane comes from $H_{A}$ and suppose $\bm{\alpha}^\top \bar{\bm{x}}^{*} + \beta = 1$ is 
        one of the $n_1 + 1$ hyperplanes and $\bar{\bm{x}}^* \in A, \bar{\bm{x}}^* + \bm{e}_{n + 1} \notin A$.

        Consider the following subsystem

         \[
        \begin{aligned}
                E' :&= \begin{cases}
                    \sum_{j=1}^n \alpha_{j} + \beta = 1, &\\
                    \sum_{j = 1}^{n} \alpha_j + \alpha_{n + j'} + \beta \leq 0, & \forall j' \in [n_1 - n]\\
                    \sum_{j \in [n] \setminus \left\{j'\right\}} \alpha_j + \beta \leq 0, & \forall j' \in [n - 1]\\
                    \sum_{j = 1}^{n - 1} \alpha_j + \beta \leq 0 \text{ or } 1\\
                \end{cases}\\
                &\Leftrightarrow
                \begin{cases}
                    \sum_{j=1}^n \alpha_{j} + \beta = 1, &\\
                    \alpha_{n + j} \leq -1, &\forall j \in [n_1 - n]\\
                    \alpha_{j} \geq 1, & \forall j \in [n - 1]\\
                    \alpha_{n} \geq 1 \text{ or } 0 & \\
                \end{cases}
        \end{aligned}
        \]
        where $\sum_{j = 1}^{n - 1} \alpha_j + \beta \leq 0$, i.e., $\alpha_{n} \geq 1$  when $\bar{\bm{x}}^* - \bm{e}_{n} \notin A$; And $\sum_{j = 1}^{n - 1} \alpha_j + \beta \leq 1$, i.e., $\alpha_{n} \geq 0$  when $\bar{\bm{x}}^* - \bm{e}_{n} \in A$. In the following proof, we will discuss those two cases when necessary.

        Next we prove that $E' = E \cap \left\{\bm{\alpha}^\top \bar{\bm{x}}^* + \beta = 1\right\}$. Similarly, it is sufficient to show that $E' \subseteq E \cap \left\{\bm{\alpha}^\top \bar{\bm{x}}^* + \beta = 1\right\}$. 

        Suppose $(\bm{\alpha}, \beta) \in E'$ and let $n':= |I(\bm{x})|$ for $\bm{x} \in \{0, 1\}^{n_1}$, then, if $\bm{x} \in A$
        \begin{enumerate}
            \item If $I(\bm{x}) \subseteq I(\bar{\bm{x}}^*)$, then 
            \begin{align*}
                \bm{\alpha}^\top \bm{x} + \beta  &= \alpha_1 + \ldots + \alpha_{n'} + \beta\\
                &= 1 - \alpha_{n'+ 1} - \ldots - \alpha_{n}\\
                &\leq 1
            \end{align*}
            \item If $I(\bar{\bm{x}}^*) \subseteq I(\bm{x})$ then 
            \begin{align*}
                \bm{\alpha}^\top \bm{x} + \beta &= \alpha_1 + \ldots + \alpha_{n'} + \beta \\
                &= 1 + \alpha_{n + 1} + \ldots + \alpha_{n'}\\
                &\leq 1
            \end{align*}
        \end{enumerate}

        For $\bm{x} \notin A$, we have 
        \begin{align*}
            \bm{\alpha}^\top \bm{x} + \beta &= \sum_{i \in I(\bm{x}) \cap I(\bar{\bm{x}}^*)} \alpha_i + \sum_{i \in I(\bm{x}) \cap I^c(\bar{\bm{x}}^*)} \alpha_i + \beta\\
            &= 1 - \sum_{i \in I^c(\bm{x}) \cap I(\bar{\bm{x}}^*)} \alpha_i + \sum_{i \in I(\bm{x}) \cap I^c(\bar{\bm{x}}^*)} \alpha_i\\
        \end{align*}

        If $\bm{\alpha}^\top \bm{x} + \beta = 1$, then we have $ \sum_{i \in I^c(\bm{x}) \cap I(\bar{\bm{x}}^*)} \alpha_i = 0, \sum_{i \in I(\bm{x}) \cap I^c(\bar{\bm{x}}^*)} \alpha_i = 0$. There are two possibilities:
        \begin{enumerate}
            \item $I^c(\bm{x}) \cap I(\bar{\bm{x}}^*) = \emptyset, I(\bm{x}) \cap I^c(\bar{\bm{x}}^*) = \emptyset$, indicating that $\bm{x} = \bar{\bm{x}}^*$ which contradicts $\bm{x} \notin A$.
            \item $I^c(\bm{x})\cap I(\bar{\bm{x}}^*) = \left\{n\right\}, I(\bm{x}) \cap I^c(\bar{\bm{x}}^*) = \emptyset$, indicating that $\bm{x} = \bar{\bm{x}}^* - \bm{e}_{n}$. Since $\bm{x} \notin A$, we have $\alpha_{n} \geq 1$, which contradicts $\sum_{i \in I^c(\bm{x}) \cap I(\bar{\bm{x}}^*)} \alpha_i = 0$.
        \end{enumerate}
        So we have $\bm{\alpha}^\top \bm{x} + \beta \leq 0, \forall \bm{x} \notin A$ and it follows that $E' = E \cap \left\{\bm{\alpha}^\top \bar{\bm{x}}^* + \beta = 1\right\}$. 

        Since there are only $n_1 + 1$ hyperplanes, we have the following extreme points:
        \begin{enumerate}
            \item If $\alpha_{n} \geq 0$, then the extreme point is $\alpha_j = 1, j = 1, \ldots, n - 1, \alpha_{n} = 0, \alpha_{n + j} = -1, j = 1, \ldots, n_1 - n, \beta = 2 - n$, which gives the following hyperplane for the convex envelope
            $$\theta \geq \sum_{j = 1}^{n - 1} x_j - \sum_{j=1}^{n_1 - n} x_{n + j} + 2 - n$$
            which is consistent with the statement in the proposition.
            \item If $\alpha_{n} \geq 1$, then the extreme point is $\alpha_j = 1, j = 1, \ldots, n, \alpha_{n + j} = -1, j = 1, \ldots, n_1 - n, \beta = 1 - n$, which gives the following hyperplane for the convex envelope
            $$\theta \geq \sum_{j = 1}^{n} x_j - \sum_{j=1}^{n_1 - n} x_{n + j} + 1 - n$$
            which is consistent with the statement in the proposition.
        \end{enumerate}
    \end{enumerate}

    Combining Cases 1, 2, and 3, the extreme points of $E$ correspond to: (i) the trivial cut $\theta \geq 0$ (Case 1), and (ii) cuts of the form in the proposition, indexed by $\bar{\bm{x}}^* \in A$. As $\bar{\bm{x}}^*$ ranges over $A$ in Cases 2 and 3, all cuts in the proposition's two families are obtained. Therefore, $\overline{\operatorname{co}}(I_A)$ admits the polyhedral representation given in the proposition.
\endproof

\subsection{Proof of Proposition \ref{prop:convexEnvForTree}}\label{sec:convexEnvBinaryTree}
\proof{Proof. }
    As in the proof of Proposition~\ref{prop:convexEnvForChain}, it is sufficient to enumerate all extreme points of $E := \left\{(\bm{\alpha}, \beta) \mid (\ref{eq:conenvCons1}), (\ref{eq:conenvCons2})\right\}$.

    Consider an extreme point of $E$ at which the tight $H_A$-constraint corresponds to an element of $A$ other than $\bar{\bm{x}}$. Such an element lies in $A_1$ or $A_2$ (or both). Without loss of generality, suppose it lies in $A_1$. By the 2-tree structure, the Hamming-1 neighbors of this element have the same $A$-membership as their $A_1$-membership, so the local constraint structure in $E$ matches the local constraint structure in the corresponding $E$-system for chain $A_1$ alone. Consequently, the argument from the proof of Proposition~\ref{prop:convexEnvForChain} applied to chain $A_1$ identifies the same extreme point, which is therefore also an extreme point of $E$, and yields the same anchored cut. Hence the extreme points of $E$ with tight $H_A$-constraint at an element of $A$ other than $\bar{\bm{x}}$ are exactly the extreme points (and cuts) of $\overline{\operatorname{co}}(I_{A_1})$ and (symmetrically) $\overline{\operatorname{co}}(I_{A_2})$.

    It remains to analyze the extreme points of $E$ at which the tight $H_A$-constraint is at $\bar{\bm{x}}$.
   
    
    Suppose $\bm{\alpha}^\top \bm{x} + \beta = I_A(\bar{\bm{x}})$ is one of the equations defining one of $E's$ extreme points and let $n = |I(\bar{\bm{x}})|$. Then, we consider the following system $E'$:

\begin{subequations}
\begin{empheq}[left=\empheqlbrace]{align}
\textbf{G1:}\;&
\alpha_{1} + \cdots + \alpha_{n} + \beta = 1
\label{eq:EpG1-1}
\\[3pt]
&
\alpha_{1} + \cdots + \alpha_{n}
+ \alpha_{n + 1} + \beta \le 1 \text{ or } 0
\label{eq:EpG1-2}
\\[3pt]
&
\alpha_{1} + \cdots + \alpha_{n}
+ \alpha_{n + i} + \beta \le 0\nonumber
\\[-0.3em]
& \hfill \forall\, i = 2,\ldots,n_1 - n \label{eq:EpG1-3}
\\[6pt]
\textbf{G2:}\;&
\alpha_{1} + \cdots + \alpha_{i - 1}
+ \alpha_{i + 1} + \cdots + \alpha_{n} + \beta \le 0
\nonumber
\\[-0.3em]
& \hfill \forall\, i \in [n - 1],\ i \neq i^* \label{eq:EpG2-1}
\\[3pt]
&
\alpha_{1} + \cdots + \alpha_{n - 1} + \beta \le 1
\label{eq:EpG2-2}
\\[3pt]
&
\alpha_{1} + \cdots + \alpha_{i^* - 1} + 0
+ \alpha_{i^* + 1} + \cdots + \alpha_{n} + \beta \le 1
\label{eq:EpG2-3}
\\[6pt]
\textbf{G3:}\;&
\alpha_{1} + \cdots + \alpha_{i^* - 1} + 0
+ \alpha_{i^* + 1} + \cdots + \alpha_{n - 1} + \beta \le 0
\label{eq:EpG3-1}
\\[3pt]
&
\alpha_{1} + \cdots + \alpha_{i^* - 1} + 0
+ \alpha_{i^* + 1} + \cdots + \alpha_{n}
+ \alpha_{n + 1} + \beta \le 0
\label{eq:EpG3-2}
\\[3pt]
&
\alpha_{1} + \cdots + \alpha_{n - 1}
+ \alpha_{n + 1} + \beta \le 0
\label{eq:EpG3-3}
\end{empheq}
\end{subequations}

where $\bm{x}(\cdot)$ maps the index set into a binary vector. For example, $\bm{x}\left(I\left(\bm{x}'\right)\right) = \bm{x}'$. Besides, we divide the constraints into three groups as shown above: group 1 with $n_1 - n + 1$ constraints; group 2 with $n$ constraints and group 3 with 3 constraints.

$E'$ is equivalent to the following system:
\begin{subequations}
\begin{empheq}[left={\empheqlbrace}]{align}
\textbf{G1:}\;& \alpha_{1} + \cdots + \alpha_{n} + \beta = 1  &  &        \label{eq:EppG1-1}\\
                   & \alpha_{n + 1} \leq 0 \text{ or -1}& &\label{eq:EppG1-2}\\
                   & \alpha_{n + i} \leq -1 & i = 2, \ldots, n_1 - n &               \label{eq:EppG1-3}\\[4pt]
\textbf{G2:}\;&  \alpha_{i} \geq 1& i \in [n - 1], i \neq i^* &    \label{eq:EppG2-1}\\
                   & \alpha_{n} \geq 0& & \label{eq:EppG2-3}\\
                   &\alpha_{i^*} \geq 0   &  &             \label{eq:EppG2-2}\\[4pt]
\textbf{G3:}\;& \alpha_{n} + \alpha_{i^*} \geq 1&  &    \label{eq:EppG3-1}\\
                   & \alpha_{n + 1} \leq \alpha_{i^*} - 1 &  &    \label{eq:EppG3-2}\\
                   & \alpha_{n + 1} \leq \alpha_{n} - 1 &  &    \label{eq:EppG3-3}
\end{empheq}
\end{subequations}
We first show that $E' = E\;\cap \;\left\{(\bm{\alpha}, \beta)| \bm{\alpha}^\top \bar{\bm{x}} + \beta = 1\right\}$. Since $E'$ only contains a subset of constraints in $E\;\cap \;\left\{(\bm{\alpha}, \beta)| \bm{\alpha}^\top \bar{\bm{x}} + \beta = 1\right\}$, we have $E' \supseteq E\cap \left\{(\bm{\alpha}, \beta)| \bm{\alpha}^\top \bar{\bm{x}} + \beta = 1\right\}$. Next we show $E' \subseteq E \cap \left\{(\bm{\alpha}, \beta)| \bm{\alpha}^\top \bar{\bm{x}} + \beta = 1\right\}$ by showing that $\bm{\alpha}^\top \bm{x} + \beta \leq I_A(\bm{x}), \forall (\bm{\alpha}, \beta) \in E', \bm{x} \in \{0, 1\}^{n_1}$:

\begin{enumerate}
    \item Case 1: $\bm{x} \in A_1$ and $I(\bm{x}) \subseteq I(\bar{\bm{x}})$.

    In this case, $\bm{\alpha}^\top\bm{x} + \beta = \sum_{i \in I(\bm{x})} \alpha_i + \beta = 1 - \sum_{i \in I(\bar{\bm{x}}) \setminus I(\bm{x})} \alpha_i \leq 1$ because $\alpha_i \geq 0, i \in I(\bm{x})$ by the group 2 constraints in $E'$.
    \item Case 2: $\bm{x} \in A_1$ and $I(\bar{\bm{x}}) \subseteq I(\bm{x})$.

    In this case, $\bm{\alpha}^\top \bm{x} + \beta = \sum_{i \in I(\bm{x})} \alpha_i + \beta = 1 + \sum_{i \in I(\bm{x}) \setminus I(\bar{\bm{x}})} \alpha_i \leq 1$ because $\alpha_i \leq 0, i \notin I(\bar{\bm{x}})$ by the group 1 constraints in $E'$.

    \item Case 3: $\bm{x} \in A_2$ and $I(\bm{x}) \subseteq I(\bar{\bm{x}})$.

    In this case, $\bm{\alpha}^\top \bm{x} + \beta \leq 1$ by the same argument as in Case 1.

    \item Case 4: $\bm{x} \notin A$.

    In this case,
    \begin{subequations}
    \begin{align}
        \bm{\alpha}^\top \bm{x} + \beta &= \sum_{i \in I(\bm{x})} \alpha_i + \beta\\
        &= \sum_{i \in I(\bm{x}) \cap I(\bar{\bm{x}})} \alpha_i + \sum_{i \in I(\bm{x})\cap I^c(\bar{\bm{x}})} \alpha_i + \beta\\
        &= 1 - \sum_{i \in I^c(\bm{x}) \cap I(\bar{\bm{x}})} \alpha_i + \sum_{i \in I(\bm{x}) \cap I^c(\bar{\bm{x}})}\alpha_i\\
        &= 1 - \sum_{i \in I^c(\bm{x}) \cap I(\bar{\bm{x}}) \cap \left\{n, i^*\right\}} \alpha_i - \sum_{i \in I^c(\bm{x}) \cap I(\bar{\bm{x}}) \cap \left\{n, i^*\right\}^c} \alpha_i \nonumber\\
        & \qquad + \sum_{i \in  I(\bm{x}) \cap I^c(\bar{\bm{x}}) \cap \left\{n + 1\right\}} \alpha_i + \sum_{i \in  I(\bm{x}) \cap I^c(\bar{\bm{x}}) \cap \left\{n + 1\right\}^c} \alpha_i \label{eq:beforeRelax}\\
        & \leq 1 - \left|I^c(\bm{x}) \cap I(\bar{\bm{x}}) \cap \left\{n, i^*\right\}^c\right| - \left|I(\bm{x}) \cap I^c(\bar{\bm{x}}) \cap \left\{n + 1\right\}^c\right| \nonumber\\
    \end{align}
    \end{subequations}

    It is sufficient to consider the case 
    \[
    I^c(\bm{x}) \cap I(\bar{\bm{x}}) \cap \left\{n, i^*\right\}^c = \emptyset
    \Leftrightarrow 
    I^c(\bm{x}) \cap I(\bar{\bm{x}}) \subseteq \left\{n, i^*\right\}
    \] 
    and
    \[
    I(\bm{x}) \cap I^c(\bar{\bm{x}}) \cap \left\{n+ 1\right\}^c = \emptyset
    \Leftrightarrow
    I(\bm{x}) \cap I^c(\bar{\bm{x}}) \subseteq \left\{n + 1\right\}
    \]

    If $I(\bm{x}) \cap I^c(\bar{\bm{x}}) = \emptyset$, then 
    $I(\bm{x}) \subseteq I(\bar{\bm{x}})$. It follows that $I(\bm{x})$ 
    has three possible values: 1. $I(\bm{x}) = I(\bar{\bm{x}}) \cap 
    \left\{n\right\}^c$ but this contradicts the assumption that 
    $\bm{x} \notin A_1$. 2. $I(\bm{x}) = I(\bar{\bm{x}}) 
    \cap \left\{i^*\right\}^c$ but this contradicts the 
    assumption that $\bm{x} \notin A_2$. 
    3. $I(\bar{\bm{x}}) \cap \left\{n, i^*\right\}^c$, which 
    gives $\bm{\alpha}^\top \bm{x} + \beta \leq 0$ by definition of $E'$.

    If $I(\bm{x}) \cap I^c(\bar{\bm{x}}) = \left\{n + 1\right\}$,
    $I(\bm{x})$ has four possible values: 1. $\left(I(\bar{\bm{x}}) \cap \left\{n\right\}^c\right)
    \cup \left\{n + 1\right\}$, which gives $\bm{\alpha}^\top \bm{x} + 
    \beta \leq 0$ by definition of $E'$. 2. $\left(I(\bar{\bm{x}}) \cap 
    \left\{i^*\right\}^c\right) \cup \left\{n + 1\right\}$, 
    which gives $\bm{\alpha}^\top\bm{x} + \beta \leq 0$ by definition of $E'$.
    3. $\left(I(\bar{\bm{x}}) \setminus \left\{i^*, n\right\}
    \right) \cup \left\{n + 1\right\}$ which gives 
    $\bm{\alpha}^\top \bm{x} + \beta \leq 0$ by the first group 3 constraint in 
    $E'$ and that $\alpha_{n + 1} \leq 0$. 
    4. $I(\bar{\bm{x}}) \cup \left\{n + 1\right\}$, which 
    indicates that $\bm{x}\left(I(\bar{\bm{x}}) \cup \left\{n + 1\right\}\right) \notin A$. Thus, we have $\alpha_{n+1} \leq -1$. Then we have the relaxed term, $\sum_{i \in  I(\bm{x}) \cap I^c(\bar{\bm{x}}) \cap \left\{n + 1\right\}} \alpha_i$, in (\ref{eq:beforeRelax}) to be -1, making $(\ref{eq:beforeRelax}) \leq 0$.

    Thus, $\bm{\alpha}^\top\bm{x} + \beta \leq I_A(\bm{x}), \forall (\bm{\alpha}, \beta) \in E', \bm{x} \in \left\{0, 1\right\}^{n_1}$.
    \end{enumerate}

    Next, we need to find all extreme points of $E'$.

    \begin{enumerate}
        \item Suppose $\bm{x}\left(I(\bar{\bm{x}}) \cup \left\{n + 1\right\}\right) \in A$, i.e., $\alpha_{n + 1} \leq 0$.
        \begin{enumerate}
        \item Case 1: the constraint $\alpha_{n + 1} \leq 0$ is tight, 
        i.e., $\alpha_{n + 1} = 0$.

        By definition of $E'$, we have the following system:
        \begin{subequations}
        \begin{empheq}[left={\empheqlbrace}]{align}
            \textbf{G1:}\;& \alpha_{1} + \cdots + \alpha_{n} + \beta = 1  &  &        \label{eq:EG1-1}\\
                               & \alpha_{n + 1} = 0 & &\label{eq:EG1-2}\\
                               & \alpha_{n + i} \leq -1 & i = 2, \ldots, n_1 - n &               \label{eq:EppG1-3}\\[4pt]
            \textbf{G2:}\;&  \alpha_{i} \geq 1& i \in [n - 1], i \neq i^* &    \label{eq:EG2-1}\\
                               & \alpha_{n} \geq 0& & \label{eq:EG2-3}\\
                               &\alpha_{i^*} \geq 0   &  &             \label{eq:EG2-2}\\[4pt]
            \textbf{G3:}\;& \alpha_{n} + \alpha_{i^*} \geq 1&  &    \label{eq:EG3-1}\\
                               & 0 \leq \alpha_{i^*} - 1 &  &    \label{eq:EG3-2}\\
                               & 0 \leq \alpha_{n} - 1 &  &    \label{eq:EG3-3}
        \end{empheq}
        \end{subequations}
        Constraints (\ref{eq:EG3-2}) and (\ref{eq:EG3-3}) are equivalent to $\alpha_{i^*} \geq 1$ and 
        $\alpha_{n} \geq 1$, which make the constraints (\ref{eq:EG2-2}), 
        (\ref{eq:EG2-3}) and (\ref{eq:EG3-1}) redundant. By solving the remaining system with all equations 
        hold, we get the following solution:
        \begin{align}
            \begin{cases}
                \alpha_i = 1,& i \in [n]\\
                \alpha_{n + 1} = 0, & \\
                \alpha_{n + i} = -1, & i = 2,\ldots, n_1 - n \\
                \beta = 1 - n
            \end{cases}
        \end{align}
        which is a facet in the convex envelope of chain 1.
        \item Case 2: The constraint $\alpha_{i^*} \geq 0$ is tight, i.e., $\alpha_{i^*} = 0$. 
        By definition of $E'$, we have the following system:
        \begin{subequations}
        \begin{empheq}[left={\empheqlbrace}]{align}
            \textbf{G1:}\;& \alpha_{1} + \cdots + \alpha_{n} + \beta = 1  &  &        \label{eq:c2EG1-1}\\
                               & \alpha_{n + 1} \leq 0 & &\label{eq:c2EG1-2}\\
                               & \alpha_{n + i} \leq -1 & i = 2, \ldots, n_1 - n &               \label{eq:c2EppG1-3}\\[4pt]
            \textbf{G2:}\;&  \alpha_{i} \geq 1& i \in [n - 1], i \neq i^* &    \label{eq:c2EG2-1}\\
                                & \alpha_{n} \geq 0& & \label{eq:c2EG2-3}\\
                               &\alpha_{i^*} = 0   &  &             \label{eq:c2EG2-2}\\[4pt]
            \textbf{G3:}\;& \alpha_{n} + 0 \geq 1&  &    \label{eq:c2EG3-1}\\
                               & \alpha_{n + 1} \leq 0 - 1 &  &    \label{eq:c2EG3-2}\\
                               & \alpha_{n + 1} \leq \alpha_{n} - 1 &  &    \label{eq:c2EG3-3}
        \end{empheq}
        \end{subequations}
        Constraints (\ref{eq:c2EG3-2}) and (\ref{eq:c2EG2-3}) make constraints (\ref{eq:c2EG1-2}) and (\ref{eq:c2EG3-3})
        redundant. The constraint (\ref{eq:c2EG3-1}) makes the constraint (\ref{eq:c2EG2-3}) redundant. By solving the
        remaining system with all equations hold, we get the following solution:
        \begin{align}
            \begin{cases}
                \alpha_{i^*} = 0 & \\
                \alpha_{n + i} = -1, & i = 1, \ldots, n_1 - n\\
                \alpha_{i} = 1, & i \in \left[n\right], i \neq i^*\\
                \beta = 2 - n & 
            \end{cases}
        \end{align}
        which is a facet in the convex envelope of chain 2.

        \item Case 3: The constraint $\alpha_{n} \geq 0$ is tight, i.e., 
        $\alpha_{n} = 0$. By definition of $E'$, we have the following 
        system:
        \begin{subequations}
        \begin{empheq}[left={\empheqlbrace}]{align}
            \textbf{G1:}\;& \alpha_{1} + \cdots + \alpha_{n} + \beta = 1  &  &        \label{eq:c3EG1-1}\\
                               & \alpha_{n + 1} \leq 0 & &\label{eq:c3EG1-2}\\
                               & \alpha_{n + i} \leq -1 & i = 2, \ldots, n_1 - n &               \label{eq:c3EG1-3}\\[4pt]
            \textbf{G2:}\;&  \alpha_{i} \geq 1& i \in [n - 1], i \neq i^* &    \label{eq:c3EG2-1}\\
                                & \alpha_{n} = 0& & \label{eq:c3EG2-3}\\
                               &\alpha_{i^*} \geq 0   &  &             \label{eq:c3EG2-2}\\[4pt]
            \textbf{G3:}\;& 0 + \alpha_{i^*} \geq 1&  &    \label{eq:c3EG3-1}\\
                               & \alpha_{n + 1} \leq \alpha_{i^*} - 1 &  &    \label{eq:c3EG3-2}\\
                               & \alpha_{n + 1} \leq 0 - 1 &  &    \label{eq:c3EG3-3}
        \end{empheq}
        \end{subequations}
        Constraints (\ref{eq:c3EG3-3}) and (\ref{eq:c3EG2-2}) make constraints (\ref{eq:c3EG1-2}) and (\ref{eq:c3EG3-2}) redundant. 
        The constraint (\ref{eq:c3EG3-1}) makes the constraint (\ref{eq:c3EG2-2}) redundant. By solving 
        the remaining system with all equations hold, we get the following solution:
        \begin{align}
            \begin{cases}
                \alpha_{i} = 1, i \in \left[n - 1\right], &i \neq i^*\\
                \alpha_{i^*} = 1,&\\
                \alpha_{n} = 0, &\\
                \alpha_{n + i} = -1, & i = 1, \ldots, n_1 - n\\
                \beta = 2 - n
            \end{cases}
        \end{align}
        which is a facet in the convex envelope of chain 1.

        \item Case 4: $\alpha_{n + 1} \leq 0, \alpha_{i^*} \geq 0, \alpha_{n} \geq 0$ are all loose.
        By solving the remaining system will all equations hold, we get the following solution:
        \begin{align}
            \begin{cases}
                \alpha_{i} = 1, & i \in \left[n -1\right], i\neq i^*\\
                \alpha_{i^*} = \frac{1}{2}\\
                \alpha_{n} = \frac{1}{2}\\
                \alpha_{n + 1} = -\frac{1}{2}\\
                \alpha_{n + i} = -1, i = 2, \ldots, n_1 - n\\
                \beta = 2 - n
            \end{cases}
        \end{align}
        which is the facet in the proposition statement.
    \end{enumerate}
    \item Suppose $\bm{x}\left(I(\bar{\bm{x}}) \cup \left\{n + 1\right\}\right) \notin A$, i.e., $\alpha_{n + 1} \leq -1$.
        \begin{enumerate}
        \item Case 1: the constraint $\alpha_{n + 1} \leq -1$ is tight, 
        i.e., $\alpha_{n + 1} = -1$.

        By definition of $E'$, we have the following system:
        \begin{subequations}
        \begin{empheq}[left={\empheqlbrace}]{align}
            \textbf{G1:}\;& \alpha_{1} + \cdots + \alpha_{n} + \beta = 1  &  &        \label{eq:EG1-1}\\
                               & \alpha_{n + 1} = -1 & &\label{eq:EG1-2-2}\\
                               & \alpha_{n + i} \leq -1 & i = 2, \ldots, n_1 - n &               \label{eq:EppG1-3-2}\\[4pt]
            \textbf{G2:}\;&  \alpha_{i} \geq 1& i \in [n - 1], i \neq i^* &    \label{eq:EG2-1-2}\\
                               & \alpha_{n} \geq 0& & \label{eq:EG2-3-2}\\
                               &\alpha_{i^*} \geq 0   &  &             \label{eq:EG2-2-2}\\[4pt]
            \textbf{G3:}\;& \alpha_{n} + \alpha_{i^*} \geq 1&  &    \label{eq:EG3-1-2}\\
                               & 0 \leq \alpha_{i^*} &  &    \label{eq:EG3-2-2}\\
                               & 0 \leq \alpha_{n} &  &    \label{eq:EG3-3-2}
        \end{empheq}
        \end{subequations}
        Constraints (\ref{eq:EG3-2-2}) and (\ref{eq:EG3-3-2}) make the constraints (\ref{eq:EG2-2-2}) and 
        (\ref{eq:EG2-3-2}) redundant. By constraint (\ref{eq:EG3-1-2}), (\ref{eq:EG3-2-2}) and (\ref{eq:EG3-3-2}) cannot be tight at the same time. Thus, there are two possibilities: 
        
        \begin{itemize}
            \item By solving the remaining system with all equations hold, except for $\alpha_n = 0$, we get the following solution:
        \begin{align}
            \begin{cases}
                \alpha_i = 1,& i \in [n] \setminus \{i^*\}\\
                \alpha_{i^*} = 0, & \\
                \alpha_{n + i} = -1, & i = 1,\ldots, n_1 - n \\
                \beta = \left(2 - n\right)
            \end{cases}
        \end{align}
        which is a facet in the convex envelope of $A_2$.
            \item By solving the remaining system with all equations hold, except for $\alpha_{i^*} = 0$, we get the following solution:
        \begin{align}
            \begin{cases}
                \alpha_i = 1,& i \in [n] \setminus \{n\}\\
                \alpha_{n} = 0\\
                \alpha_{n + i} = -1, & i = 1,\ldots, n_1 - n \\
                \beta = \left(2 - n\right)
            \end{cases}
        \end{align}
        which is a facet in the convex envelope of $A_1$.
        \end{itemize}
    
        \item Case 2: The constraint $\alpha_{i^*} \geq 0$ is tight, i.e., $\alpha_{i^*} = 0$. 
        By definition of $E'$, we have the following system:
        \begin{subequations}
        \begin{empheq}[left={\empheqlbrace}]{align}
            \textbf{G1:}\;& \alpha_{1} + \cdots + \alpha_{n} + \beta = 1  &  &        \label{eq:c2EG1-1}\\
                               & \alpha_{n + 1} \leq -1 & &\label{eq:c2EG1-2-2}\\
                               & \alpha_{n + i} \leq -1 & i = 2, \ldots, n_1 - n &               \label{eq:c2EppG1-3-2}\\[4pt]
            \textbf{G2:}\;&  \alpha_{i} \geq 1& i \in [n - 1], i \neq i^* &    \label{eq:c2EG2-1-2}\\
                                & \alpha_{n} \geq 0& & \label{eq:c2EG2-3-2}\\
                               &\alpha_{i^*} = 0   &  &             \label{eq:c2EG2-2-2}\\[4pt]
            \textbf{G3:}\;& \alpha_{n} + 0 \geq 1&  &    \label{eq:c2EG3-1-2}\\
                               & \alpha_{n + 1} \leq 0 - 1 &  &    \label{eq:c2EG3-2-2}\\
                               & \alpha_{n + 1} \leq \alpha_{n} - 1 &  &    \label{eq:c2EG3-3-2}
        \end{empheq}
        \end{subequations}
        Constraints (\ref{eq:c2EG3-2-2}) and (\ref{eq:c2EG2-3-2}) make constraints (\ref{eq:c2EG1-2-2}) and (\ref{eq:c2EG3-3-2})
        redundant. The constraint (\ref{eq:c2EG3-1-2}) makes the constraint (\ref{eq:c2EG2-3-2}) redundant. By solving the
        remaining system with all equations hold, we get the following solution:
        \begin{align}
            \begin{cases}
                \alpha_{i^*} = 0 & \\
                \alpha_{n + i} = -1, & i = 1, \ldots, n_1 - n\\
                \alpha_{i} = 1, & i \in \left[n\right], i \neq i^*\\
                \beta = 2 - n & 
            \end{cases}
        \end{align}
        which is a facet in the convex envelope of chain 2.

        \item Case 3: The constraint $\alpha_{n} \geq 0$ is tight, i.e., 
        $\alpha_{n} = 0$. By definition of $E'$, we have the following 
        system:
        \begin{subequations}
        \begin{empheq}[left={\empheqlbrace}]{align}
            \textbf{G1:}\;& \alpha_{1} + \cdots + \alpha_{n} + \beta = 1  &  &        \label{eq:c3EG1-1-2}\\
                               & \alpha_{n + 1} \leq -1 & &\label{eq:c3EG1-2-2}\\
                               & \alpha_{n + i} \leq -1 & i = 2, \ldots, n_1 - n &               \label{eq:c3EG1-3-2}\\[4pt]
            \textbf{G2:}\;&  \alpha_{i} \geq 1& i \in [n - 1], i \neq i^* &    \label{eq:c3EG2-1-2}\\
                                & \alpha_{n} = 0& & \label{eq:c3EG2-3-2}\\
                               &\alpha_{i^*} \geq 0   &  &             \label{eq:c3EG2-2-2}\\[4pt]
            \textbf{G3:}\;& 0 + \alpha_{i^*} \geq 1&  &    \label{eq:c3EG3-1-2}\\
                               & \alpha_{n + 1} \leq \alpha_{i^*} - 1 &  &    \label{eq:c3EG3-2-2}\\
                               & \alpha_{n + 1} \leq 0 - 1 &  &    \label{eq:c3EG3-3-2}
        \end{empheq}
        \end{subequations}
        Constraints (\ref{eq:c3EG3-3-2}) and (\ref{eq:c3EG2-2-2}) make constraints (\ref{eq:c3EG1-2-2}) and (\ref{eq:c3EG3-2-2}) redundant. 
        The constraint (\ref{eq:c3EG3-1-2}) makes the constraint (\ref{eq:c3EG2-2-2}) redundant. By solving 
        the remaining system with all equations hold, we get the following solution:
        \begin{align}
            \begin{cases}
                \alpha_{i} = 1, i \in \left[n - 1\right], &i \neq i^*\\
                \alpha_{i^*} = 1,&\\
                \alpha_{n} = 0, &\\
                \alpha_{n + i} = -1, & i = 1, \ldots, n_1 - n\\
                \beta = 2 - n
            \end{cases}
        \end{align}
        which is a facet in the convex envelope of chain 1.

        \item Case 4: $\alpha_{n + 1} \leq -1, \alpha_{i^*} \geq 0, \alpha_{n} \geq 0$ are all loose.
        By solving the remaining system will all equations hold, we get the following solution:
        \begin{align}
            \begin{cases}
                \alpha_{i} = 1, & i \in \left[n -1\right], i\neq i^*\\
                \alpha_{i^*} = \frac{1}{2}\\
                \alpha_{n} = \frac{1}{2}\\
                \alpha_{n + 1} = -\frac{1}{2}\\
                \alpha_{n + i} = -1, i = 2, \ldots, n_1 - n\\
                \beta = 1 - \left(n - 1\right)
            \end{cases}
        \end{align}
        However, this contradicts the constraint $\alpha_{n + 1} \leq -1$. Thus, this case is not possible.
    \end{enumerate}
    \end{enumerate}
    Combining all cases, the extreme points of $E$ correspond to:
    \begin{enumerate}
        \item the cuts of $\overline{\operatorname{co}}(I_{A_1})$ and $\overline{\operatorname{co}}(I_{A_2})$, derived in Cases 1, 2, and 3 of both outer cases (these cases are degenerate and produce the same extreme points);
        \item when $\bar{\bm{x}} + \bm{e}_{n + 1} \in A$, the additional 2-tree cut derived in Case 4 of the first outer case.
    \end{enumerate}
    The condition that $I(\bm{x}) \subseteq I(\bar{\bm{x}})$ for all $\bm{x} \in A_1 \cup A_2$ in statement~1 of the proposition implies $\bar{\bm{x}} + \bm{e}_{n + 1} \notin A$, so no additional cut arises and $\overline{\operatorname{co}}(I_A) = \overline{\operatorname{co}}(I_{A_1}) \cap \overline{\operatorname{co}}(I_{A_2})$. Otherwise (statement~2), Case 4 contributes the additional cut stated in the proposition.
\endproof

\subsection{Proof of Theorem~\ref{prop:gddCons}}\label{sec:gddCons}
\proof{Proof. }
    Let $M > N\sup_{\bm{x} \in \cup_{i \in [N]} X_i} f(\bm{x})$. We first show the weak duality between~\eqref{problem:consP} and~\eqref{problem:consD}:

    Let $\bm{x}^*$ be the optimal solution of~\eqref{problem:consP}, then
    \begin{subequations}
    \begin{align}
        \text{\eqref{problem:consP}} &= f(\bm{x}^*) \nonumber\\
        &= \sum_{i=1}^N \left(\frac{1}{N} f(\bm{x}^*) + \frac{1}{N}g_i(\bm{x}^*) \right)\label{eq:zeroSum}\\
        &\geq \sum_{i=1}^N \min_{\bm{x}^i \in X_i} \left(\frac{1}{N} f(\bm{x}^i) + \frac{1}{N}g_i(\bm{x}^i) \right)\label{eq:relax}
    \end{align}
    \end{subequations}
    where (\ref{eq:zeroSum}) holds because $\sum_{i=1}^N g_i(\bm{x}) = 0, \forall \bm{x} \in \cup_{i\in[N]} X_i$ and (\ref{eq:relax}) holds because $\bm{x}^* \in X_i, i \in [N]$. By taking maximum over $g_i$, we have 
    $$\text{\eqref{problem:consP}} \geq \text{\eqref{problem:consD}}.$$

    To show the strong duality, we first define the extension of $f$ as follows:
    \begin{align*}
        F_i(\bm{x}) := \begin{cases}
            f(\bm{x}) & \text{if } \bm{x} \in X_i\\
            M & \text{otherwise}
        \end{cases}
    \end{align*}
    Then, let $g_i^*(\bm{x}) := \frac{\sum_{j=1}^N F_j(\bm{x})}{N} - F_i(\bm{x})$. It follows that
    \begin{align*}
        \text{\eqref{problem:consD}} &\geq \frac{1}{N} \sum_{i=1}^N \min_{\bm{x}^i \in X_i} \left(f(\bm{x}^i) + g^*_i(\bm{x}^i)\right)\\
        &= \frac{1}{N} \sum_{i=1}^N \min_{\bm{x}^i \in X_i} \left(f(\bm{x}^i) + \frac{\sum_{j=1}^N F_j(\bm{x}^i)}{N} - F_i(\bm{x}^i)\right)\\
        &= \frac{1}{N} \sum_{i=1}^N \min_{\bm{x}^i \in X_i} \left(F_i(\bm{x}^i) + \frac{\sum_{j=1}^N F_j(\bm{x}^i)}{N} - F_i(\bm{x}^i)\right)\\
        &= \frac{1}{N} \sum_{i=1}^N \min_{\bm{x}^i \in X_i} \frac{\sum_{j=1}^N F_j(\bm{x}^i)}{N}\\
    \end{align*}
    We claim that the optimal solution $\hat{\bm{x}}^i$ of the inner minimization problem $\min_{\bm{x}^i \in X_i} \frac{\sum_{j=1}^N F_j(\bm{x}^i)}{N}$ is in $\cap_{i\in [N]} X_i$. Otherwise, suppose there exists $j^* \in [N]$ such that $\hat{\bm{x}}^i \notin X_{j^*}$ and let $\bar{\bm{x}} \in \cap_{i\in [N]}X_i$. We have 
    \begin{align*}
        \sum_{j = 1}^N F_j(\hat{\bm{x}}^i) &\geq F_{j^*}(\hat{\bm{x}}^i)\\
        &= M \\
        &> N\sup_{\bm{x} \in \cup_{i \in [N]} X_i} f(\bm{x})\\
        &\geq N f(\bar{\bm{x}})\\
        &= \sum_{j=1}^N F_j(\bar{\bm{x}})
    \end{align*}
    contradicting the optimality of $\hat{\bm{x}}^i$. 

    Thus, we have 
    \begin{align*}
        \frac{1}{N} \sum_{i=1}^N\min_{\bm{x}^i \in X_i} \frac{\sum_{j=1}^N F_j(\bm{x}^i)}{N} &= \frac{1}{N} \sum_{i=1}^N\min_{\bm{x} \in \cap_{j\in [N]}X_j} \frac{\sum_{j=1}^N F_j(\bm{x})}{N}\\
        &= \min_{\bm{x} \in \cap_{j\in [N]}X_j} \frac{\sum_{j=1}^N F_j(\bm{x})}{N}\\
        &= \min_{\bm{x} \in \cap_{j\in [N]}X_j} \frac{\sum_{j=1}^N f(\bm{x})}{N} \\
        &= \min_{\bm{x} \in \cap_{j\in [N]}X_j} f(\bm{x}) = \text{\eqref{problem:consP}}
    \end{align*}

    Therefore, we have $\text{\eqref{problem:consP}} = \text{\eqref{problem:consD}}$.
\endproof

\end{APPENDICES}

\end{document}